\numberwithin{equation}{section}
\theoremstyle{plain}
\newtheorem{theorem}{Theorem}[section]
\newtheorem{proposition}{Proposition}[section]
\newtheorem{corollary}{Corollary}[section]
\theoremstyle{definition}
\newtheorem{definition}{Definition}[section]
\newtheorem{example}{Example}[section]
\newtheorem{remark}{Remark}[section]
\newcommand{\interior}[1]{%
	{\kern0pt#1}^{\mathrm{o}}%
}
\begin{document}

\title{Critical domains for certain Dirichlet integrals in weighted manifolds}
\author{Levi Lopes de Lima}
\address{Universidade Federal do Cear\'a (UFC),
	Departamento de Matem\'{a}tica, Campus do Pici, Av. Humberto Monte, s/n, Bloco 914, 60455-760,
	Fortaleza, CE, Brazil.}
\email{levi@mat.ufc.br}

\begin{abstract}
We start by revisiting the derivation of the variational formulae for the functional assigning to  a bounded regular domain in a fixed Riemannian manifold its first Dirichlet eigenvalue and then extend it to (not necessarily bounded) domains in certain weighted manifolds. This is further extended to other functionals defined by certain Dirichlet energy integrals, with a Morse index formula for the corresponding critical domains being established. We complement these infinitesimal results by proving a couple of {\em global} rigidity theorems for (possibly critical) domains in Gaussian half-space, including an Alexandrov-type soap bubble theorem. Although we provide direct proofs of these latter results, we find it worthwhile to point out that the main tools employed (specifically, certain Pohozhaev and Reilly identities) can be formally understood as limits (when the dimension goes to infinity) of tools previously established by Ciraolo-Vezzoni  \cite{ciraolo2017rigidity,ciraolo2019serrin} and Qiu-Xia \cite{qiu2015generalization} to handle similar problems in round hemispheres, with the notion of ``convergence'' of weighted manifolds being loosely inspired by the celebrated Poincar\'e limit theorem in the theory of Gaussian random vectors.
	\end{abstract}

\maketitle
\tableofcontents

\section{Introduction}\label{introd}

We consider a complete Riemannian manifold $(M,g)$ of dimension $n\geq 2$ and a smooth function $\phi:M\to\mathbb R$ and then form the {\em weighted manifold} $(M,g,e^{-\phi}d{\rm vol}_g)$, 
where $d{\rm vol}_g$ is the Riemannian volume element associated to $g$. 
We fix a connected, proper
domain $\Omega\subset M$ with a smooth boundary $\Sigma=\partial\Omega$. In the purely Riemannian case, where $\phi$ equals a constant, we always assume that $\overline\Omega$ is compact. Otherwise, we impose that $\Omega$ has finite $\phi$-volume (in the sense that ${\rm vol}_\phi(\Omega)<+\infty$,
where for  any Borel subset $B\subset M$ we set ${\rm vol}_\phi(B)=\int_B d{\rm vol}_\phi$, with  $d{\rm vol}_\phi=e^{-\phi}d{\rm vol}_g$).  In particular, if $M$ itself has finite $\phi$-volume, which in fact is the only case of interest here, then if needed we can assume that $d{\rm vol}_\phi$ defines a probability measure on $M$ and with this normalization our assumption on $\Omega$ boils down to $0<{\rm vol}_\phi(\Omega)<1$. 

\begin{example}\label{ex:gauss:pro}
	A notable example of a weighted manifold is the {\em Gaussian space}
		\begin{equation}\label{gauss:def}
		(\mathbb R^n,\delta, e^{-\phi_n}d{\rm vol}_\delta),\quad \phi_n(x)=\frac{|x|^2}{2}+\frac{n}{2}\log(2\pi),
	\end{equation}
	where $\delta$ is the standard flat metric, which satisfies ${\rm vol}_{\phi_n}(\mathbb R^ n)=1$. A somewhat informal interpretation of Poincar\'e limit theorem  \cite{hida1964gaussian,umemura1965infinite,mckean1973geometry,diaconis1987dozen} allows us to view this space as the appropriate limit of orthogonal projections onto $\mathbb R^n$ of (volume re-scaled) round spheres of dimension $k-1$ and radius $\sqrt{k}$ as $k\to+\infty$, a viewpoint that, at least on heuristic grounds,  we will find useful to employ here; see Remarks \ref{motiv:poin} and \ref{mod:alex} for discussions on how this heuristics applies to concrete rigidity problems and Appendix  \ref{app:poinc} for the formal results supporting it.  
	\end{example}

Our aim here is to study, initially from a variational perspective, the most elementary properties of {\em optimal} configurations for Dirichlet functionals of the type
\begin{equation}\label{dirich:funct}
	(\Omega,g,e^{-\phi}d{\rm vol}_\phi)\mapsto \mathcal E(\Omega):=\inf_{u}\int_\Omega\left(\alpha|\nabla u|^2+\beta u^2-fu\right)d{\rm vol}_\phi, 
	\end{equation}
where $\alpha,\beta\in\mathbb R$, $\alpha>0$, $f$ is an auxiliary function and $u:\Omega\to\mathbb R$ is  assumed to vary in a suitable Sobolev space of functions determined by appropriate boundary conditions (usually of Dirichlet type) and possibly satisfying an extra integral constraint. It is convenient to assume further that $\Omega$ varies within a set of domains with a {\em fixed} $\phi$-volume, a viewpoint we will always adopt here. Although a minimizer (henceforth, an {\em optimal} domain) to this variational problem is known to exist under mild conditions on the data (see for instance \cite{buttazzo2013some} and the references therein), a complete classification is far from being available in general. However, and this is our initial motivation here, we may use variational methods to get some insights on the properties an optimal domain should eventually satisfy. Instead of delving into the intricacies of a general theory, we prefer here to illustrate the methods and results by considering a couple of important examples.    

\begin{example}\label{ex:eigen} (The first Dirichlet eigenvalue)
  Let $\mathcal E(\Omega,g,d{\rm vol}_\phi)=\underline{\lambda}(\Omega,g,d{\rm vol}_\phi)$ be the functional ascribing to any weighted domain the first eigenvalue $\underline{\lambda}(\Omega,g,d{\rm vol}_\phi)$ associated to the Dirichlet problem
\begin{equation}\label{dir:prob}
	\left\{
	\begin{array}{lc}
		\Delta_\phi u+\lambda u=0 &  \Omega\\
		u=0 & \Sigma
	\end{array}
	\right.
\end{equation}
where  $\Sigma=\partial\Omega$ and
\[
\Delta_\phi={\rm div}_\phi\nabla=\Delta-\langle\nabla \phi,\nabla\,\cdot\,\rangle
\]
is the {\em weighted Laplacian}. Here, 
\begin{equation}\label{div:def}
{\rm div}_\phi=e^\phi{\rm div}_ge^{-\phi}={\rm div}\,-\langle\nabla\phi,\cdot\rangle
\end{equation}
 is the {\em weighted divergence}, $\langle\,,\rangle=g(\cdot,\cdot)$, $\Delta$ is the metric Laplacian and $\nabla$ is the metric gradient; we refer to Section \ref{prel} for a discussion on the technical assumptions needed to make sure that the existence problem for (\ref{dir:prob}) (in particular, the existence of a first eigenvalue with the expected properties) is well-posed for appropriate choices of $\Omega$. Granted these conditions, it turns out that 
\begin{equation}\label{norm:f:eigen}
	\underline{\lambda}(\Omega,g,d{\rm vol}_\phi)=\int_\Omega|\nabla u|^2d{\rm vol}_\phi, 
\end{equation}
where, besides solving (\ref{dir:prob}), $u$ is further constrained to satisfy 
\begin{equation}\label{furt:const:eigen}
	\int_\Omega u^2d{\rm vol}_\phi=1.
	\end{equation}
Due to the obvious geometric meaning of $\underline\lambda$, it is natural to investigate its variational properties under a $\phi$-volume-preserving assumption. More precisely, if $0<V<{\rm vol}_\phi(M)$,
\[
\mathscr D_V=\left\{
\Omega\subset M;{\rm vol}_\phi(\Omega)=V 
\right\}
\] 
and 
\[
\underline\lambda_V=\inf_{\Omega\in \mathscr D_V} \underline\lambda(\Omega),
\]
the question is to explictly determine those $\Omega\in \mathscr D_V$ such that $\underline\lambda_V=\underline\lambda(\Omega)$.	
\end{example}

\begin{example}\label{ex:eq:lim} (A Dirichlet energy)
For $\beta\in\mathbb R$ and $f\in L^2_\phi(\Omega)$ the {\em Dirichlet} $(\beta,f)$-{\em energy} of $\Omega\subset M$ is 
	\begin{equation}\label{energy:0}
		\mathcal E_{\beta,f}(\Omega)=\inf_{v}
		J_{\beta,f}(v),
	\end{equation}	
	where, for $v:\Omega\to\mathbb R$ vanishing on $\Sigma=\partial\Omega$,  
	\begin{equation}\label{j:func:0}
		J_{\beta,f}(v)=\int_\Omega\left(\frac{1}{2}|\nabla v|^2+\frac{\beta}{2}v^2-fv\right)d{\rm vol}_\phi
	\end{equation}
	which, under suitable conditions (cf. Remark \ref{suff:cond:inv} below), is well defined whenever $\beta>-\underline\lambda(\Omega)$ in the sense that there exists $w:\overline\Omega\to\mathbb R$ such that $\mathcal E_{\beta,f}(\Omega)=J_{\beta,f}(w)>-\infty$ with $w$ satisfying 
	\begin{equation}\label{dich:min:0}
		\left\{
		\begin{array}{lc}
			-\Delta_\phi w+\beta w=f &  \Omega\\
			w=0  & \Sigma
		\end{array}
		\right.
	\end{equation} 
	An interesting problem here is to classify the domains which minimize the energy (\ref{energy:0}) in the class of all domains with a given $\phi$-volume. Thus, 
if
\[
\mathscr E_{V,\beta,f}=\inf_{\Omega\in \mathscr D_V} \mathcal E_{\beta,f}(\Omega),
\]
the question is to explicitly determine those $\Omega\in \mathscr D_V$ such that $\mathscr E_{V,\beta,f}=\mathcal E_{\beta,f}(\Omega)$.	
	\end{example}

A recurrent theme here is to try to understand how $\mathcal E(\Omega)$ varies with $\Omega$ at an  infinitesimal level (and of course restricting ourselves to the examples above). For this we consider variations of $\Omega$ induced by one-parameter families of diffeomorphisms of the type $t\in(-\epsilon,\epsilon)\mapsto \varphi_t:M\to M$ with $\varphi_0={\rm Id}$, which we assume to be $\phi$-{\em volume-preserving} in the sense that ${\rm vol}_\phi(\Omega_t)={\rm vol}_\phi(\Omega)$ for $t\in(-\epsilon,\epsilon)$. In the context of Example \ref{ex:eigen}, and 
following the pioneering works by Garabedian and Schiffer \cite{garabedian1952convexity} and Shimakura \cite{shimakura1981premiere}, where the Euclidean case $(\mathbb R^n,\delta,d{\rm vol}_\delta)$ has been treated in detail, our first goal is to compute the variational formulae for 
\begin{equation}\label{f:s:der}
\frac{d}{dt}\underline{\lambda}(\Omega,g,d{\rm vol}_\phi)|_{t=0}\quad {\rm and} \quad 
\frac{d^2}{dt^2}\underline{\lambda}(\Omega,g,d{\rm vol}_\phi)|_{t=0}, \quad \Omega_t=\varphi_t(\Omega),
\end{equation}
in terms of the 
{\em variational function} $\xi=\langle v,\nu\rangle$, where 
$v:=d\varphi_t/dt|_{t=0}$ is the {\em variational field} and $\nu$ is the outward unit normal vector field to $\Sigma$ (Propositions \ref{first:var:p} and \ref{sec:var:en:1}). 
Needless to say, we also perform similar calculations in the setting of Example \ref{ex:eq:lim} (see the proof of Proposition \ref{opt:prop:f} culminating in (\ref{f:der:en}) and Proposition \ref{sec:var:en}).

\begin{remark}\label{hist:rem}
Our calculation for (\ref{f:s:der}) should be compared with the more general approaches in \cite{el2007domain,cavalcante2024stability} (in the purely Riemannian case) and in \cite{cunha2023hadamard} (for weighted manifolds but restricted to the first variation), where the authors handle more general classes of deformations (with the time varying metrics not being necessarily induced by embeddings of $\Omega$ into a {\em fixed} Riemannian manifold). Besides its simplicity, the advantage of the (more pedestrian) approach adopted here, which aligns with the computations in \cite{henry2005perturbation,delfour2011shapes,henrot2018shape,delay2013extremal}, lies in the fact that it can be easily extended to other naturals energy functionals, as illustrated here for the examples above in the category of weighted manifolds. 
\end{remark}

\begin{remark}\label{over:just}
A common (and certainly well-known) feature that emerges from our computations for the first variational formula is that optimal domains support functions (in our examples, the functions $u$ and $w$ satisfying (\ref{dir:prob}) and (\ref{dich:min:0}), respectively) meeting an extra boundary condition involving the constancy of its normal derivative. In other words, these functions satisfy an {\em over-determined} elliptic boundary value problem, which means that optimal domains tend to be quite rigid and hence in principle amenable to a classification; see Remark \ref{cons:crit}.  
\end{remark}

With the variational  formulae at hand, we then proceed to extend the Morse index formula first established in \cite{delima1995local} (for $\underline\lambda$ and in the Euclidean setting) to the more general framework considered here (Theorems \ref{morse:form:w} and \ref{morse:dirich}), with some interesting applications briefly indicated (Remark \ref{many:pot} and \ref{all}).  

Although variational methods certainly may be explored to access the most elementary properties of 
optimal domains for Dirichlet integrals, they do not seem to be of much use in the rather challenging problem of explicitly characterizing these domains. Thus, it is natural to seek for alternate routes by resorting to {\em global} methods relying on appropriate versions of certain classical differential and integral identities (usually associated to Pohozhaev and Reilly, respectively). Our first result in this direction is Theorem \ref{class:th}, where we classify solutions to an over-determined system associated to a certain Dirichlet energy ($\mathcal E_{-1,1}$ in the notation of Example \ref{ex:eq:lim}) defined in a domain contained in Gaussian half-space. This constitutes the exact analogue of a result in  \cite[Theorem 1.1]{ciraolo2019serrin} for domains in a round hemisphere and similarly relies on a certain Pohozhaev identity (Proposition \ref{pohoz:ident}). We stress that, as explained in Remark \ref{motiv:poin}, both our over-determined system and the associated Pohozhaev identity may be viewed as ``Poincar\'e limits'' of the corresponding  entities in \cite{ciraolo2019serrin}, which points toward the existence of an underlying, although informal, principle which may be useful in other contexts. We confirm this by means of Theorem \ref{alex:type:thm}, a version of Alexandrov's soap bubble theorem for embedded hypersurfaces in Gaussian half-space. As in the previous example, this is the exact analogue of the classical spherical case, as approached in \cite[Theorem 1.2]{qiu2015generalization}, thus similarly hinging on a Reilly-type identity (Proposition \ref{reilly:w}) which, as explained in Remark \ref{mod:alex}, may be viewed as the ``Poincar\'e limit'' of the corresponding identity in \cite[Theorem 1.1]{qiu2015generalization}. We hope that this kind of ``Poincar\'e convergence'', which is used here  
as an informal device to transfer tools and results from spheres to Gaussian space, might be successfully employed in other classes of related problems. 

\vspace{0.2cm}	
\noindent
{\bf Acknowledgments.} The author would like to thank S. Almaraz, C. Barroso and J.F. Montenegro for conversations and also to T. Shioya for comments and for making available his survey paper \cite{shioya2022metric}.

\section{Some preliminary facts}\label{prel}

We collect here the preliminary technical ingredients needed to carry out the proofs of our main results.
We insist that the {\em smooth} boundary $\Sigma=\partial\Omega$ will always be oriented by its outward pointing unit normal vector $\nu$ and is endowed with the weighted area element $d{\rm area}=e^{-\phi}d{\rm area}_g$, where $d{\rm area}_g$ is the area element induced by $g$. 
As usual, we let $L^2_\phi(\Omega)$ denote the space of all measurable functions $f:\Omega\to\mathbb R$ such that $\int_\Omega f^2d{\rm vol}_\phi<+\infty$. More generally, if $s\in\mathbb N_0=\{0,1,2,\cdots\}$ we denote by $W^s_\phi(\Omega)$ the  Sobolev space of all such functions such that its weak derivatives $\{\nabla^jf\}_{j=0}^s$ up to order $s$ lie in 
$L_\phi^2(\Omega)=W_\phi^0(\Omega)$
with the Hilbertian norm 
\begin{equation}\label{sob:norm}
\|f\|_{W^s_\phi(\Omega)}^2=\sum_{j=1}^s\|\nabla^jf\|^2_{L^2_\phi(\Omega)}. 
\end{equation}
Note that we may extend this definition to $s\in \mathbb Z$ (by duality) and to $s\in\mathbb R$ (by interpolation) Finally, if $\Omega$ is proper we denote by $W^s_{\phi[0]}(\Omega)$ the closure of $C^\infty_0(\Omega)$ in $W^s_\phi(\Omega)$. 

\begin{remark}\label{equiv:sob}
	Since $\phi$ is smooth,  whenever $\Omega$ is bounded 
	the Sobolev norm (\ref{sob:norm}) is equivalent to the standard metric Sobolev norm (which is obtained by taking $\phi\equiv 0$ and gives rise to the standard Sobolev spaces $W^s(\Omega)$, etc.). In this case we have identifications $W^s_\phi(\Omega)=W^s(\Omega)$, etc.  
	\end{remark}

We impose a few working assumptions  (which are automatically satisfied when $\Omega$ is bounded) on our otherwise general setup. 

\begin{itemize}
	\item {\bf A.1}
A {\em Poincar\'e inequality} holds for elements of $W^1_{\phi[0]}(\Omega)$: there exists $C=C_{\Omega,\phi}>0$ such that 
\begin{equation}\label{poin}
\int_\Omega |\nabla f|^2d{\rm vol}_\phi\geq C\int_\Omega f^2\,d{\rm vol}_\phi, \quad f\in 
W^1_{\phi[0]}(\Omega),
\end{equation}
which implies that the square root of the integral in the left-hand side defines a norm in $W^1_{\phi[0]}(\Omega)$ which is equivalent to $\|\,\|_{W^1_\phi(\Omega)}$. 
\item {\bf A.2}
The embedding $W^1_{\phi}(M)\hookrightarrow L^2_\phi(M)$ is compact. 
\end{itemize}

Taken together, these assumptions imply that the embedding $W^1_{\phi[0]}(\Omega)\hookrightarrow L^2_\phi(\Omega)$ is compact for any $\Omega$ as above. Thus, by standard spectral theory, the (weak) formulation of the eigenvalue problem in (\ref{dir:prob}) admits a non-trivial solution (an eigenfunction) for $\lambda$ varying in a discrete set $\{\lambda_l\}_{l=1}^{+\infty}$ with $\lambda_l\to+\infty$ as $l\to+\infty$. Also, under these conditions it is known that the first eigenvalue $\underline{\lambda}=\lambda_1>0$ is such that the corresponding eigenfunction $u$ does not change sign throughout $\Omega$, so that the corresponding eigenspace is simple (that is, one-dimensional). Hence, $\underline{\lambda}$ depends smoothly on smooth variations of $\Omega$ (for instance, variations of the type $\Omega_t=\varphi_t(\Omega)$ as in Section \ref{introd}). Finally, since both $\Sigma$ and $\phi$ are assumed to be smooth, standard elliptic theory implies that $u$ is smooth up to the boundary. Hence, if we normalize $u$ such that (\ref{furt:const:eigen}) is satisfied then $\underline\lambda$ is given by (\ref{norm:f:eigen});
we refer to the discussion in \cite[Section 2]{betta2007isoperimetric} and the references therein for details; see also Remark \ref{ex:gauss} below.

\begin{remark}\label{suff:cond:inv}
The existence of a {\em smallest} eigenvalue $\underline{\lambda}$ for $\Delta_\phi$ implies that the operator 
\[
-\Delta_\phi+\beta:W^1_{\phi[0]}(\Omega)\to L^2_\phi(\Omega)
\] is invertible if $\beta>-\underline\lambda$, which justifies the existence of a minimizer for $J_{\beta,f}$ in (\ref{j:func:0}) via a solution of (\ref{dich:min:0}).
\end{remark}

The assumptions above suffice to justify the computations leading to the variational formulae in Sections \ref{var:form} and \ref{dich:energy}. However, in order to establish finer properties of the underlying functional (such as the Morse index formula in Theorems \ref{morse:form:w} and \ref{morse:dirich}) one more requirement is needed.  

\begin{itemize}
	\item {\bf A.3} The standard ``elliptic package'' for non-homogeneous boundary problems of the type
	 \begin{equation}\label{dir:prob:non}
	 	\left\{
	 	\begin{array}{lc}
	 		\Delta_\phi u=f &  \Omega\\
	 		u=\xi & \Sigma
	 	\end{array}
	 	\right.
	 \end{equation}
	 holds true, including suitable trace/extensions theorems, the fact that the map
	 \begin{equation}\label{iso:ell}
	 u\in W^{s+2}_\phi(\Omega)\mapsto (\Delta_\phi u,u|_\Sigma)\in W_\phi^{s}(\Omega)\times W_\phi^{s+3/2}(\Sigma), \quad s\geq 0,
	 \end{equation}
	 is an isomorphism and the corresponding regularity theory (up to the boundary); see Remark \ref{package}.
\end{itemize}

\begin{remark}\label{ex:gauss}
	Poincar\'e inequality (\ref{poin}) holds in the {Gaussian} setting of Example \ref{ex:gauss:pro} 
	with $\Omega$ being any proper domain with ${\rm vol}_{\phi_n}\Omega)<1$. 
	Proofs may be found in \cite{ehrhard1984inegalites,di2002linear,chiacchio2004comparison} and the method of proof, which is based on Gaussian rearrangements, may be extended to weighted manifolds for which the validity of a certain relative isoperimetric inequality is taken for granted \cite{talenti1997weighted}.
	On the other hand,  the compactness of the embedding $W^1_{\phi}(M)\hookrightarrow L^2_\phi(M)$ seems to hold in a more general setting (if we further assume that ${\rm vol}_\phi(M)<+\infty$). 
	For instance, it holds 
	for $(\mathbb R^n,\delta,e^{-\phi}d{\rm vol}_\delta)$, with $\phi(x)\sim |x|^\theta$, $\theta>1$, as $|x|\to+\infty$ \cite{hooton1981compact}. More generally (that is, in the presence of a Riemannian background $(M,g)$) it is well-known that this property follows from the validity of a certain logarithmic Sobolev inequality, which by its turn holds true whenever the corresponding weighted manifold satisfies ${\rm Ric}_\phi\geq \rho g$, $\rho>0$,  where 
	\begin{equation}\label{bakry:ricci}
		{\rm Ric}_\phi={\rm Ric}_g+\nabla^2\phi
		\end{equation}
		 is the Bakry-\'Emery Ricci tensor; see \cite[Section 5.7]{bakry2014analysis}, where this is discussed in the abstract framework of diffusion Markov triples satisfying the curvature-dimension condition $CD(\rho,\infty)$, $\rho>0$. 
	\end{remark}

\begin{remark}\label{package}
	If $\overline\Omega$ is compact then the elliptic package mentioned above is classically known \cite{lions2012non,folland2020introduction}. In the case $(\mathbb R^n,\delta, e^{-\phi}d{\rm vol}_\delta)$, it is established in \cite{harrington2014sobolev} under the assumption that $\Sigma$ is smooth with a positive reach and under mild assumptions on $\phi$ which are too complicated to exactly reproduce here but which essentially boil down to assuming that $\phi(x)$ grows at least as $|x|^2$ as $|x|\to +\infty$). In particular, this covers the Gaussian case in (\ref{gauss:def}). Keeping the same growth conditions on $\phi$, we further mention that the methods in \cite{harrington2014sobolev} may be adapted without much difficulty to the general case $(M,g,e^{-\phi}d{\rm vol}_g)$ if there exist a compact $K\subset M$, $r>0$ and a diffeomorphism $\psi:(M\backslash K,g)\to(\mathbb R^n\backslash B_r(\vec{0}),\delta)$ with uniformly bounded distortion (up to a sufficiently high order).  
	\end{remark}

\begin{remark}\label{AAA:123}
The assumptions {\bf A.1}, {\bf A.2} and {\bf A.3} above entail the fact that both $\Omega$ and $\Sigma$ are sufficiently well behaved at infinity (both topologically and metrically) in order that the validity of a couple of technical ingredients further needed below is ensured. 
First, the following integration by parts formula holds:
\begin{equation}\label{int:part}
	\int_\Omega f\Delta_\phi h\, d{\rm vol}_\phi+\int_\Omega\langle\nabla f,\nabla h\rangle\, d{\rm vol}_\phi=
	\int_\Sigma f\frac{\partial h}{\partial\nu}d{\rm area}_\phi, 
	\end{equation}
	where $f\in W_\phi^1(\Omega)\cap C^\infty(\Omega)$ and $h\in W_\phi^2(\Omega)\cap C^\infty(\Omega)$; this follows from the corresponding divergence theorem
	\begin{equation}\label{div:thn:wei}
		\int_\Omega {\rm div}_\phi Y d{\rm vol}_\phi=\int_\Sigma \langle Y,\nu\rangle d{\rm area}_\phi,
		\end{equation}
where now $Y$ is vector field on $\overline \Omega$ lying in a suitably defined Sobolev space. 
We remark that this latter formula follows 	
	from the standard one (for bounded domains) by a simple approximation.
	In the same vein, $\Sigma$ is is assumed to support its own weighted Sobolev scale $W_\phi^{s}(\Sigma)$, so that 
		the intrinsic analogue of (\ref{int:part}) holds:
		\begin{equation}\label{int:part:s}
			\int_\Sigma f\Delta_{\Sigma,\phi} h\, d{\rm area}_\phi=-\int_\Sigma\langle D f, D h\rangle\, d{\rm area}_\phi,	
			\end{equation}
			where now $f$ and $h$ are functions on $\Sigma$ lying in a suitable Sobolev space (say, $W^2_\phi(\Sigma)$),  $D$ is the covariant derivative of $g|_\Sigma$ and we use self-explanatory notation stemming from the fact that $\Sigma$ itself is a weighted manifold with the induced structures. As before, this should follow from the analogue of (\ref{div:thn:wei}),
		\begin{equation}\label{div:thm:wei:s}
			\int_\Sigma {\rm div}_{\Sigma,\phi} Y d{\rm area}_\phi=0,
			\end{equation}
			where $Y$ is tangent to $\Sigma$ (and also lies in a suitable Sobolev space). Finally, we note that at some points of this paper, specifically in the proofs of Theorem \ref{class:th} and \ref{alex:type:thm} (which concern properties of domains in Gaussian space) we assume that $\Sigma$ is such that certain test functions, which depend polynomially on $|x|$, lie in $W^2_\phi(\Sigma)$, so that the integration by parts formulae above may be applied. In this regard, polynomial volume growth at infinity is more than enough.  
\end{remark}
	
\noindent
{\bf Standing assumptions:} Henceforth we will always take it for granted that the weighted domain $(\Omega,g,e^{-\phi}d{\rm vol}_g)$ and its weighted boundary $(\Sigma,g|_\Sigma,e^{-\phi}d{\rm area}_g)$ meet the assumptions {\bf A.1}, {\bf A.2} and {\bf A.3} above; see also Remarks \ref{suff:cond:inv}, \ref{ex:gauss}, \ref{package} and \ref{AAA:123}. 	
	
\vspace{0.3cm}	
	
Another ingredient we will use is 
a simple adaptation to the weighted setting of certain variational formulae for bulk and surface integrals which are well-known in the area of shape optimization \cite{henry2005perturbation,delfour2011shapes,henrot2018shape}. To state it we recall that the  {\em weighted mean curvature} of $\Sigma$ is 
\begin{equation}\label{mean:curv:weig}
	H_\phi={\rm div}_\phi\nu=H-\langle \nabla \phi,\nu\rangle,
	\end{equation} 	
	where $H={\rm div}\,\nu$ is the usual mean curvature. 

\begin{proposition}\label{main:formulae}
	For any variation $t\in (-\epsilon, \epsilon)\mapsto\Omega_t$ as above (not necessarily preserving volume) and any smooth family of smooth maps $t\in (-\epsilon, \epsilon)\mapsto \Psi(t, \cdot): \Omega_t\to \mathbb R$ such that
	$\Psi(t,\cdot)$ and $\partial\Psi/\partial t(t,\cdot)$ lie in $W^1_\phi(\Omega_t)\cap C^\infty(\Omega_t)$  there hold
	\begin{equation}\label{main:formulae:b}
		\frac{d}{dt}\int_{\Omega_t}\Psi\, d{\rm vol}_\phi=\int_{\Omega_t}\frac{\partial\Psi}{\partial t}d{\rm vol}_\phi+
		\int_{\Sigma_t}\Psi\xi \,d{\rm area}_\phi, \quad\Sigma_t=\partial\Omega_t,
		\end{equation}
		and
		\begin{equation}\label{main:formulae:s}
				\frac{d}{dt}\int_{\Sigma_t}\Psi\, d{\rm area}_\phi=\int_{\Sigma_t}\left(
				\frac{\partial\Psi}{\partial t}+\left(\langle\nabla\Psi,\nu\rangle+H_\phi\Psi\right)\xi
				\right)d{\rm area}_\phi,
			\end{equation}
			where $\nabla$ here means the metric gradient with respect to the $x$ variable. 
	\end{proposition}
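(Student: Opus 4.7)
The plan is to reduce both identities to their well-known unweighted (Riemannian) counterparts by absorbing the weight $e^{-\phi}$ into the integrand, and then to read off the extra terms using the product rule together with the identity $H_\phi = H - \langle \nabla \phi, \nu \rangle$. Concretely, I would write
\[
\int_{\Omega_t} \Psi\, d\mathrm{vol}_\phi = \int_{\Omega_t} (\Psi e^{-\phi})\, d\mathrm{vol}_g, \qquad \int_{\Sigma_t} \Psi\, d\mathrm{area}_\phi = \int_{\Sigma_t} (\Psi e^{-\phi})\, d\mathrm{area}_g,
\]
observe that $e^{-\phi}$ is time-independent, and then invoke the classical shape-derivative formulae (as found in \cite{henry2005perturbation,delfour2011shapes,henrot2018shape}).

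For the bulk formula (\ref{main:formulae:b}), the classical identity applied to $\Psi e^{-\phi}$ yields
\[
\frac{d}{dt}\int_{\Omega_t}\Psi e^{-\phi}\, d\mathrm{vol}_g = \int_{\Omega_t} \frac{\partial \Psi}{\partial t}\, e^{-\phi}\, d\mathrm{vol}_g + \int_{\Sigma_t} \Psi e^{-\phi}\, \xi\, d\mathrm{area}_g,
\]
since $\partial_t(\Psi e^{-\phi}) = (\partial_t \Psi)e^{-\phi}$; re-interpreting the right-hand side in terms of $d\mathrm{vol}_\phi$ and $d\mathrm{area}_\phi$ gives (\ref{main:formulae:b}) immediately.

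For the surface formula (\ref{main:formulae:s}), the classical identity applied to $F = \Psi e^{-\phi}$ reads
\[
\frac{d}{dt}\int_{\Sigma_t} F\, d\mathrm{area}_g = \int_{\Sigma_t}\left( \frac{\partial F}{\partial t} + (\langle \nabla F, \nu\rangle + HF)\,\xi \right) d\mathrm{area}_g.
\]
Expanding $\langle \nabla(\Psi e^{-\phi}), \nu\rangle = e^{-\phi}\langle\nabla \Psi,\nu\rangle - \Psi e^{-\phi}\langle\nabla\phi,\nu\rangle$ and factoring out $e^{-\phi}$, the term inside the parentheses becomes
\[
e^{-\phi}\left(\frac{\partial \Psi}{\partial t} + \bigl(\langle\nabla \Psi,\nu\rangle + (H - \langle\nabla\phi,\nu\rangle)\Psi\bigr)\xi\right),
\]
and using (\ref{mean:curv:weig}) to replace $H - \langle\nabla\phi,\nu\rangle$ by $H_\phi$ yields (\ref{main:formulae:s}).

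The only non-routine point is justifying the differentiation under the integral sign when $\overline{\Omega}$ is not compact. Here I would rely on the standing assumptions \textbf{A.1}--\textbf{A.3} together with the Sobolev regularity hypotheses made on $\Psi$ and $\partial_t\Psi$: these guarantee that the standard approximation by compactly supported variations (together with the divergence theorem (\ref{div:thn:wei}) and its surface analogue (\ref{div:thm:wei:s})) passes to the limit in the weighted norms, so that the classical unweighted formulae remain valid after the substitution $F = \Psi e^{-\phi}$. This approximation is the only delicate ingredient; once it is in place, the rest is algebraic.
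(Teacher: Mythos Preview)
Your proposal is correct and follows essentially the same approach as the paper: the paper likewise invokes the classical (unweighted) formula from \cite{henry2005perturbation}, observes that the weighted versions follow by ``straightforward manipulations'' (which are precisely the product-rule computations you spell out), and handles the unbounded case by approximation based on the standing assumptions. If anything, your write-up is more explicit than the paper's own sketch.
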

	
	\begin{proof}
		The simple proof \cite[Theorem 1.11]{henry2005perturbation} in the Euclidean case (with $\Omega$ bounded) applies {\em verbatim} to the Riemannian setting. From this the weighted versions above (again with $\Omega$ bounded) may be easily derived by straightforward manipulations. The general case, in which $\Omega$ is only assume to have  a finite $\phi$-volume, follows by a simple approximation (based on our standing assumptions).  
	\end{proof}
	
	\begin{remark}\label{app:var:gen}
	If $\Psi=1$ in (\ref{main:formulae:b}) we obtain the first variation formula for the $\phi$-volume:
	\[
	\frac{d}{dt}_{|_{t=0}}{\rm vol}_\phi(\Omega_t)=\int_\Sigma \xi \,d{\rm area}_\phi.
	\]
	In particular, if the variation is $\phi$-volume-preserving then the variational function $\xi$ satisfies  
	\begin{equation}\label{cond:mean}
	\int_\Sigma \xi \,d{\rm area}_\phi=0.
	\end{equation}
	Conversely, it is well-known that for any compactly supported, smooth function $\xi:\Sigma\to\mathbb R$ satisfying (\ref{cond:mean}) there exists a $\phi$-volume-preserving variation whose variational vector is $\xi\nu$. For any such variation we may use (\ref{main:formulae:s}) with $\Psi=\xi$ to check that 
	\begin{equation}
		\label{cond:mean:2}
		0=	\frac{d^2}{dt^2}_{|_{t=0}}{\rm vol}_\phi(\Omega_t)=
			\int_\Sigma\left(\dot\xi+\left(\frac{\partial\xi}{\partial\nu}+H_\phi\xi\right)\xi\right)d{\rm area}_\phi,
	\end{equation} 
	where here and in the following a dot means $d/dt|_{t=0}$. 
	\end{remark}

\section{The variational formulae for $\underline\lambda$}\label{var:form}	
 
 We start by stating our version of the classical {\em Hadamard formula} for the first variation of $\underline{\lambda}$; see also \cite{cunha2023hadamard} and the references therein for other versions of this result in the weighted setting.
 
 \begin{proposition}\label{first:var:p}
 If 
 $\Omega_t=\varphi_t(\Omega)$ is a (not necessarily $\phi$-volume-preserving) variation of domains as above and $\underline{\lambda}_t$ is the corresponding first eigenvalue then
 \begin{equation}\label{first:var:f}
 	\underline{\dot\lambda}=-\int_\Sigma \left(\frac{\partial u}{\partial \nu}\right)^2\xi\,d{\rm area}_\phi.
 \end{equation}
 \end{proposition}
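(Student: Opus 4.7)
The plan is to differentiate the Rayleigh-type identity $\underline{\lambda}_t=\int_{\Omega_t}|\nabla u_t|^2\,d\mathrm{vol}_\phi$ (from (\ref{norm:f:eigen}), applied along the family) using the bulk variational formula (\ref{main:formulae:b}) and then exploit the eigenvalue equation and the Dirichlet condition to simplify. Let $u_t$ denote the normalized first eigenfunction on $\Omega_t$; by \textbf{A.3} together with the smooth dependence of the simple eigenpair on the domain (as recalled after \textbf{A.2}), we may view $(t,x)\mapsto u_t(x)$ as a smooth family (after extending each $u_t$ smoothly off $\overline{\Omega_t}$), and set $\dot u:=\partial_t u_t|_{t=0}$. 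Applying (\ref{main:formulae:b}) with $\Psi_t=|\nabla u_t|^2$ and using that $u=0$ on $\Sigma$ (so $|\nabla u|^2=(\partial u/\partial\nu)^2$ on $\Sigma$), I obtain
\begin{equation*}
\underline{\dot\lambda}=2\int_\Omega\langle\nabla u,\nabla\dot u\rangle\,d\mathrm{vol}_\phi+\int_\Sigma\left(\frac{\partial u}{\partial\nu}\right)^2\xi\,d\mathrm{area}_\phi.
\end{equation*}

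Next I would integrate by parts the first term via (\ref{int:part}) and substitute $\Delta_\phi u=-\underline{\lambda}u$, getting
\begin{equation*}
2\int_\Omega\langle\nabla u,\nabla\dot u\rangle\,d\mathrm{vol}_\phi=2\underline{\lambda}\int_\Omega u\dot u\,d\mathrm{vol}_\phi+2\int_\Sigma\dot u\,\frac{\partial u}{\partial\nu}\,d\mathrm{area}_\phi.
\end{equation*}
The bulk term on the right vanishes: differentiate the normalization (\ref{furt:const:eigen}) along the family using (\ref{main:formulae:b}), noting again that $u|_\Sigma=0$ kills the boundary contribution, to conclude $\int_\Omega u\dot u\,d\mathrm{vol}_\phi=0$.

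It then remains to compute $\dot u|_\Sigma$. For any $p\in\Sigma$, the identity $u_t(\varphi_t(p))=0$ (valid because $\varphi_t(p)\in\Sigma_t$) differentiates at $t=0$ to $\dot u(p)+\langle v(p),\nabla u(p)\rangle=0$. Since $u$ vanishes on $\Sigma$, its gradient there is purely normal, $\nabla u=(\partial u/\partial\nu)\nu$, so
\begin{equation*}
\dot u|_\Sigma=-\xi\,\frac{\partial u}{\partial\nu}.
\end{equation*}
Substituting this into the boundary integral produces $-2\int_\Sigma(\partial u/\partial\nu)^2\xi\,d\mathrm{area}_\phi$, which combined with the surviving $+\int_\Sigma(\partial u/\partial\nu)^2\xi\,d\mathrm{area}_\phi$ from the first display yields exactly (\ref{first:var:f}).

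The main technical obstacle is justifying the smooth dependence $t\mapsto u_t$ (and the use of (\ref{main:formulae:b}) and (\ref{int:part}) on possibly unbounded domains): this is precisely what the simplicity of $\underline{\lambda}$, the compactness in \textbf{A.2}, the elliptic package in \textbf{A.3}, and the integration-by-parts convention of Remark \ref{AAA:123} are there to guarantee. Everything else is an organized but routine chain of substitutions.
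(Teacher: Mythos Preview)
Your proof is correct and follows essentially the same route as the paper: differentiate the normalized Rayleigh identity via (\ref{main:formulae:b}), use the orthogonality $\int_\Omega u\dot u\,d\mathrm{vol}_\phi=0$ coming from (\ref{furt:const:eigen}), and integrate $\int_\Omega\langle\nabla u,\nabla\dot u\rangle\,d\mathrm{vol}_\phi$ by parts against $\Delta_\phi u=-\underline\lambda u$ together with $\dot u|_\Sigma=-\xi\,\partial u/\partial\nu$. The only cosmetic difference is that the paper additionally records a second evaluation of that same integral (by throwing the derivatives onto $\dot u$ and using the differentiated eigenvalue equation), obtaining $\int_\Omega\langle\nabla u,\nabla\dot u\rangle\,d\mathrm{vol}_\phi=\underline{\dot\lambda}$ directly and then equating the two expressions; your single integration by parts already suffices.
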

 
\begin{proof} 
	The corresponding first eigenfunction $u_t$ satisfies
	\begin{equation}\label{dir:prob:t}
		\left\{
		\begin{array}{lc}
			\Delta_\phi u_t+\underline{\lambda}_t u_t=0 &  \Omega_t\\
			u_t=0 & \Sigma_t
		\end{array}
		\right.
	\end{equation}
so upon derivation we see that $\dot u$ satisfies 
  \begin{equation}\label{dir:prob:t:dt}
 	\left\{
 	\begin{array}{lc}
 		\Delta_\phi\dot u+\dot{\underline{\lambda}} u+\underline{\lambda} \dot u=0 &  \Omega\\
 		\dot u=-\frac{\partial u}{\partial\nu}\xi & \Sigma
 	\end{array}
 	\right.
 \end{equation}
We now observe that, due to our normalization (\ref{furt:const:eigen}),
\[
1=\int_{\Omega_t}u_t^2d{\rm vol}_\phi, 
\]
so if we use (\ref{main:formulae:b}) with $\Psi=u_t^2$ we get 
\[
0=2\int_\Omega u\dot u \,d{\rm vol}_\phi+\int_\Sigma u^2\xi \,d{\rm area}_\phi, 
\]
 and since $u=0$ on $\Sigma$,
 \begin{equation}\label{orth:c}
 	\int_\Omega u\dot u\,d{\rm vol}_\phi=0. 
 \end{equation}
We also have, this time using the normalization (\ref{norm:f:eigen}),  
\[
\underline{\lambda}_t=\int_{\Omega_t}|\nabla u_t|^2d{\rm vol}_\phi,   
\]
which by (\ref{main:formulae:b}) with $\Psi=|\nabla u_t|^2$ gives   
\begin{equation}\label{lamd:dot:prel}
\underline{\dot\lambda}
 = 2
{\int_\Omega\langle\nabla u,\nabla\dot u\rangle d{\rm vol}_\phi}+
\int_\Sigma|\nabla u|^2\xi \,d{\rm area}_\phi.
\end{equation}
By means of (\ref{int:part}) we can handle the first integral in the right-hand side above in two slightly different ways. We first have 
\begin{eqnarray*}
{\int_\Omega\langle\nabla u,\nabla\dot u\rangle d{\rm vol}_\phi}
& = & 
 -\int_\Omega u\Delta_\phi \dot u \,d{\rm vol}_\phi+\int_\Sigma u\frac{\partial\dot u}{\partial\nu}d{\rm area}_\phi \\
& \stackrel{(\ref{dir:prob:t:dt})}{=} &  
\int_\Omega u\left(\underline{\dot\lambda} u+\lambda\dot u\right) d{\rm vol}_\phi,
\end{eqnarray*}
so that (\ref{orth:c}) gives
\begin{equation}\label{lamb:1}
{\int_\Omega\langle\nabla u,\nabla\dot u\rangle d{\rm vol}_\phi}=\underline{\dot\lambda}.	
	\end{equation}
On the other hand, 	
\begin{eqnarray*}
	{\int_\Omega\langle\nabla u,\nabla\dot u\rangle d{\rm vol}_\phi}
	& = & 
	-\int_\Omega \dot u\Delta_\phi u \,d{\rm vol}_\phi+\int_\Sigma \dot u\frac{\partial u}{\partial\nu}\,d{\rm area}_\phi \\
	& \stackrel{(\ref{dir:prob})+(\ref{dir:prob:t:dt})}{=} &  
	-\int_\Omega \dot u(-\lambda u) d{\rm vol}_\phi+
	\int_\Sigma \left(-\frac{\partial u}{\partial \nu}\xi\right)\frac{\partial u}{\partial \nu}d{\rm area}_\phi,
\end{eqnarray*}
so that, again using (\ref{orth:c}),  
\begin{equation}\label{lamb:2}
	{\int_\Omega\langle\nabla u,\nabla\dot u\rangle d{\rm vol}_\phi}=-\int_\Sigma \left(\frac{\partial u}{\partial \nu}\right)^2\xi\, d{\rm area}_\phi,	
\end{equation}
and the result follows from this and 
(\ref{lamb:1}).
\end{proof}

We now explore (\ref{first:var:f}) by recalling a definition first put forward in \cite{shimakura1981premiere}.
	
	\begin{definition}\label{crit:shima}
	Let $\Sigma'\subset\Sigma$ be an open subset. We say that $(\Omega,g)$ is $\Sigma'$-{\em critical} (for $\underline\lambda$)  if $\underline{\dot\lambda}=0$ for any $\phi$-volume-preserving deformation such that ${\rm supp}\,\xi\subset\Sigma'$.
	\end{definition}
	
	
	\begin{remark}\label{cons:crit}
		That $\Omega$ is $\Sigma'$-critical is equivalent to the first eigenfunction $u$ satisfying
	\begin{equation}\label{crit:cond}
		\left\{
		\begin{array}{lc}
			\Delta_\phi u+\underline{\lambda} u= 0 & \Omega\\
			s_\xi= \xi & \Sigma\\
		|\nabla u|=\kappa & \Sigma'	
		\end{array}
		\right.
	\end{equation}	
	for some constant $\kappa>0$. In particular, if $\Sigma'=\Sigma$ and $\Omega$ lies in a simply connected space form $\mathscr S^n_K$ with sectional curvature $K\in \mathbb R$, results in \cite{serrin1971symmetry,kumaresan1998serrin} imply that $\Omega$ is a geodesic ball (where $\Omega$ is required to lie in a hemisphere in the spherical case $K>0$). On the other hand, it is clear that any annular domain $\Omega\subset \mathscr S^n_K$ enclosed by geodesic spheres (so that $\partial\Omega=\mathbb S^{n-1}_{r_1}(x_1)\cup\mathbb S^{n-1}_{r_2}(x_1)$, the $r_i>0$ and $x_i\in  \mathscr S^n_K$, $i=1,2$) is $\mathbb S^{n-1}_{r_i}(x_i)$-critical for $i=1,2$. Thus, the localization in Definition \ref{crit:shima} allows for some more flexibility in the somewhat arduous task of finding non-trivial, explicit solutions to the over-determined system (\ref{crit:cond}).
		\end{remark}
		
	\begin{remark}\label{opt:cond:l}
		If $\Omega$ is optimal for $\underline\lambda$ then it is $\Sigma$-critical and (\ref{crit:cond}) means that the ``optimal'' eigenfunction $u$ satisfies an extra condition along $\Sigma$ (its normal derivative is constant). Of course, this aligns with the comments in Remark \ref{over:just}. 
		\end{remark}	

	We now proceed to compute $\underline{\ddot\lambda}$ assuming that $(\Omega,g)$ is $\Sigma'$-critical. 
	We first notice that under $\Sigma'$-criticality any such $\xi$ admits a natural extension towards $\Omega$. Indeed, observing that $\dot u=0$ outside ${\rm supp}\,\xi$, it follows from (\ref{dir:prob:t:dt}) that $s_\xi:=-(\partial u/\partial \nu)^{-1}\dot u$ satisfies
\begin{equation}\label{inter:step:2}
	\left\{
	\begin{array}{lc}
		\Delta_\phi s_\xi+\underline{\lambda} s_\xi= 0 & \Omega\\
		s_\xi= \xi & \Sigma\\
		\int_\Omega s_\xi u\,d{\rm vol}_\phi=0
	\end{array}
	\right.
\end{equation}  
In particular, this allows us to define $\partial\xi/\partial\nu:=\partial s_\xi/\partial \nu$ along $\Sigma$. 
We may now state our version of the second variation formula for $\underline\lambda$ in the weighted setting; see Remark \ref{hist:rem} for other approaches to this calculation in the purely Riemannian setting. 

\begin{proposition}\label{sec:var:f:w:l}
	If $\Omega$ is $\Sigma'$-critical (for $\underline\lambda$) then 
\begin{equation}\label{shim:q}
	\underline{\ddot\lambda} = 2\kappa^2Q_\phi(\xi),
\end{equation}
where the quadratic form $Q_\phi$ is given by 
\begin{equation}\label{shim:qq}
	Q_\phi(\xi)=\int_{\Sigma'} \xi \mathcal L_\phi\xi\, d{\rm area}_\phi, \quad \mathcal L_\phi\xi=\frac{\partial \xi}{\partial\nu}+H_\phi\xi. 
\end{equation}	
	\end{proposition}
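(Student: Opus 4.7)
The plan is to differentiate the first variation formula (\ref{first:var:f}) once more in $t$ and to simplify the resulting boundary integral using the extra boundary conditions afforded by $\Sigma'$-criticality. The derivation of (\ref{first:var:f}) uses only the eigenvalue equation, the normalization $\int u_t^2 \, d\mathrm{vol}_\phi = 1$ and the boundary condition $u_t|_{\Sigma_t} = 0$, all of which remain valid at every small $t$; thus
\[
\underline{\dot\lambda}(t) = -\int_{\Sigma_t}\left(\frac{\partial u_t}{\partial \nu_t}\right)^{\!2} \xi_t \, d\mathrm{area}_\phi
\]
holds for all $t$ near $0$, and $\underline{\ddot\lambda}$ is extracted by differentiating this identity at $t=0$. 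Applying (\ref{main:formulae:s}) to the integrand $\Psi := (\partial u/\partial\nu)^2 \xi$, suitably extended to a neighborhood of $\Sigma$, and using the $\Sigma'$-criticality assumption $\partial u/\partial\nu = \kappa$ on $\Sigma'$ (with $\xi$ supported there), one arrives at an expression for $\underline{\ddot\lambda}$ whose remaining ingredients are the shape derivatives $\partial_t(\partial u_t/\partial\nu_t)|_{t=0}$ and $\dot\xi$, together with $\partial^2 u/\partial\nu^2$ and $H_\phi$.

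Two boundary identities then reduce everything to a form involving only $\xi$, $\partial\xi/\partial\nu$ and $H_\phi$. First, the splitting
\[
\Delta_\phi u = \Delta_{\Sigma,\phi} u + H_\phi\,\frac{\partial u}{\partial\nu} + \frac{\partial^2 u}{\partial\nu^2},
\]
valid near $\Sigma$, combined with $u|_\Sigma = 0$ and the eigenvalue equation, gives $\partial^2 u/\partial\nu^2 = -H_\phi\kappa$ on $\Sigma'$. Second, $\underline{\dot\lambda} = 0$ at the critical point reduces (\ref{dir:prob:t:dt}) to $\Delta_\phi \dot u + \underline\lambda \dot u = 0$ in $\Omega$ with $\dot u|_\Sigma = -\kappa\xi$; hence $\dot u = -\kappa s_\xi$ where $s_\xi$ solves (\ref{inter:step:2}), and therefore $\partial \dot u/\partial\nu = -\kappa(\partial\xi/\partial\nu)$ on $\Sigma'$ by the very definition of $\partial\xi/\partial\nu$ introduced just before the statement. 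Substituting these into the expression from the previous paragraph, the contributions built from $\partial^2 u/\partial\nu^2$ and from the $H_\phi\Psi$ term in (\ref{main:formulae:s}) combine to produce the mean-curvature contribution $2\kappa^2 H_\phi\xi^2$, while the material derivative of $(\partial u_t/\partial\nu_t)^2$ contributes $2\kappa^2 \xi(\partial\xi/\partial\nu)$, modulo tangential divergence terms along $\Sigma'$.

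To close the computation, I would work with a purely normal variational field $v = \xi\nu$ along $\Sigma$---a reduction legitimate at a critical domain, where the second variation depends only on the normal component of $v$---and integrate the remaining tangential divergence terms by parts on $\Sigma'$. Since $\xi$ has compact support in $\Sigma'$, no boundary contributions arise and everything collapses to
\[
\underline{\ddot\lambda} = 2\kappa^2 \int_{\Sigma'}\xi\left(\frac{\partial\xi}{\partial\nu} + H_\phi\xi\right) d\mathrm{area}_\phi = 2\kappa^2 Q_\phi(\xi).
\]
The main technical hurdle throughout is the careful bookkeeping of the shape derivatives of boundary-intrinsic quantities such as $\partial u_t/\partial\nu_t$ and $\xi_t = \langle v,\nu_t\rangle$, both of which depend on the evolving outward normal $\nu_t$; restricting to normal variations and exploiting the criticality hypothesis eliminate the tangential corrections that would otherwise clutter the calculation.
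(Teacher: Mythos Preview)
Your approach is essentially the paper's: differentiate (\ref{first:var:f}) via (\ref{main:formulae:s}) with $\Psi=\langle\nabla u,\nu\rangle^2\xi$, use the boundary identity $(\nabla^2u)(\nu,\nu)=-\kappa H_\phi$ and the relation $\dot u=-\kappa s_\xi$, and restrict to a normal variation $v=\xi\nu$ (so $\dot\nu=-D\xi$ is tangential and $\langle\nabla u,\dot\nu\rangle=0$ on $\Sigma$, which is why no tangential-divergence gymnastics are actually needed).

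There is, however, a genuine gap. After substituting the two boundary identities, the expression for $\underline{\ddot\lambda}$ still contains a term $-\kappa^2\int_{\Sigma'}\dot\xi\,d{\rm area}_\phi$, and this is \emph{not} a tangential divergence: it cannot be removed by integration by parts on $\Sigma'$. What eliminates it is the $\phi$-volume-preserving constraint at second order, namely (\ref{cond:mean:2}),
\[
\int_{\Sigma}\left(\dot\xi+\left(\frac{\partial\xi}{\partial\nu}+H_\phi\xi\right)\xi\right)d{\rm area}_\phi=0.
\]
Carrying the bookkeeping through honestly, the $\partial^2u/\partial\nu^2$ contribution and the $H_\phi\Psi$ term combine with the material and normal derivatives of $\Psi$ to give only $\underline{\ddot\lambda}=\kappa^2 Q_\phi(\xi)-\kappa^2\int_{\Sigma'}\dot\xi\,d{\rm area}_\phi$ (equivalently, $2\kappa^2Q_\phi(\xi)$ minus $\kappa^2$ times the left-hand side of (\ref{cond:mean:2})); it is precisely (\ref{cond:mean:2}) that produces the final factor of $2$. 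Your sketch never invokes the volume constraint beyond first order, so as written it cannot close, and the coefficient claims ``$2\kappa^2 H_\phi\xi^2$'' and ``$2\kappa^2\xi(\partial\xi/\partial\nu)$'' you assert for the individual pieces are not what the computation actually yields.
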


\begin{proof}
An useful information here is obtained by noticing that, along $\Sigma'$, where $|\nabla u|=|\partial u/\partial \nu|=\kappa$,
\[
0  =  -\underline{\lambda} u =\Delta_\phi u=\Delta_gu-\frac{\partial u}{\partial \nu}\langle\nabla \phi,\nu\rangle,
\]
where $\nabla^2$ is the metric Hessian operator.
Since
\[
\Delta_gu=\Delta_{\Sigma} u+\frac{\partial u}{\partial \nu} H+(\nabla^2u)(\nu,\nu)=\frac{\partial u}{\partial \nu} H+(\nabla^2u)(\nu,\nu)
\]
and $\Delta_{\Sigma} u=0$, where $\Delta_{\Sigma}$ is the intrinsic Laplacian of $\Sigma$, we conclude that 
\begin{equation}\label{H:comp}
	(\nabla^2u)(\nu,\nu)=-\frac{\partial u}{\partial \nu}H_\phi. 
	\end{equation}
With these preliminaries at hand we  note that, by (\ref{first:var:f}),
\[
\frac{d\underline{\lambda}}{dt}=-\int_{\Sigma'_t}\langle\nabla u_t,\nu_t\rangle^2\xi_t\,d{\rm area}_{\phi},  \quad \Sigma'_t=\varphi(\Sigma'), 
\]
so that (\ref{main:formulae:s}) gives 
\begin{equation}\label{var:gen}
-\underline{\ddot\lambda}=\int_{\Sigma'}\left(\dot\Psi+\left(\langle \nabla \Psi,\nu\rangle+\Psi H_\phi\right)\xi \right)d{\rm area}_{\phi}, \quad \Psi=\langle\nabla u_t,\nu_t\rangle^2\xi_t.	
	\end{equation}
	We now compute the various terms in the right-hand side above. First, 
	\[
		\dot\Psi  = 
		2 \langle \nabla u,\nu\rangle\left(\langle \nabla \dot u,\nu\rangle+\langle \nabla u ,\dot\nu\rangle\right)\xi
		+\langle \nabla u,\nu\rangle^2\dot\xi.
		\]
		Now recall that $\dot u=-\partial u/\partial \nu\, s_\xi$ and 
 $|\nabla u|=\kappa$ along $\Sigma'$. Also, without loss of generality we may assume that $v$ is normal ($v=\xi\nu$), which gives  $\dot\nu=-D\xi$ \cite[Proposition 2.2]{zhu2002lectures}; recall that $D$ is the intrinsic covariant derivative of $\Sigma$.
Hence,
\[
\dot\Psi=-2\kappa^2\xi\frac{\partial \xi}{\partial\nu}+\kappa^2\dot\xi. 
\]	
On the other hand, again using $\Sigma'$-criticality,
\[
\langle \nabla \Psi,\nu\rangle+\Psi H_\phi=\nu\left(\langle \nabla u,\nu\rangle^2\right)\xi+
\kappa^2\left(\frac{\partial\xi}{\partial\nu}+H_\phi\xi\right).
\]
But
\[
\nu\left(\langle \nabla u,\nu\rangle^2\right)=2\frac{\partial u}{\partial \nu}\left(\langle\nabla_\nu\nabla u,\nu\rangle+\langle \nabla u,\nabla_\nu\nu\rangle\right)
\]
and since 
\[
\langle \nabla u,\nabla_\nu\nu\rangle=\frac{\partial u}{\partial \nu}\langle\nu,\nabla_\nu\nu\rangle=0
\]
we find that 
\[
\nu\left(\langle \nabla u,\nu\rangle^2\right)=2\frac{\partial u}{\partial \nu}\langle\nabla_\nu\nabla u,\nu\rangle
\stackrel{(\ref{H:comp})}{=}-2\kappa^2H_\phi.
\]
Thus, if we put together the pieces of our computation we get
\[
\underline{\ddot\lambda} =2\kappa^2\int_{\Sigma'} \xi\left(\frac{\partial \xi}{\partial\nu}+H_\phi\xi\right)d{\rm area}_{\phi} -\kappa^2\int_\Sigma\left(\dot\xi+\left(\frac{\partial\xi}{\partial\nu}+H_\phi\xi\right)\xi\right)d{\rm area}_{\phi},
\]
so the proof is completed using (\ref{cond:mean:2}).
\end{proof}

\section{A Morse index formula}\label{morse}

The computations leading to (\ref{shim:q})-(\ref{shim:qq}) justify the following definitions, which are direct extensions of concepts already appearing in \cite{shimakura1981premiere,delima1995local}. Given $\widetilde\Sigma\subset\Sigma$, we denote by $\widehat{W}^1_{\phi[0]}(\widetilde\Sigma)$ the space of functions $\xi\in {W}^1_{\phi[0]}(\widetilde\Sigma)$ such that $\int_\Sigma\xi d{\rm area}_\phi=0$.

\begin{definition}\label{stab}
	If $\Omega\subset M$ is $\Sigma'$-critical (for $\underline\lambda$) with $\Sigma'\subset\Sigma$, then we say that it is  $\Sigma'$-{\em stable} if $Q_\phi(\xi)\geq 0$ for any $\xi\in \widehat{W}^{1/2}_{\phi[0]}(\Sigma')$. In case $Q_\phi|_{\widehat{W}^{1/2}_{\phi[0]}(\Sigma')}$ is positive definite then we say that $\Omega$ is {\em strictly} $\Sigma'$-{\em stable}. If $\Sigma'=\Sigma$ then we simply say that $\Omega$ is stable (or strictly stable). 
\end{definition} 

\begin{definition}\label{index}
If $\Omega$ is $\Sigma'$-critical (with $\Sigma'\subset \Sigma$) and $\Sigma''\subset\Sigma'$, we define  the {\em nullity} and the {\em index}  of the pair $(\Omega,\Sigma'')$ by 
\[
{\rm null}(\Omega,\Sigma'')=\dim\left\{\xi\in \widehat{W}^{1/2}_{\phi[0]}(\Sigma'')); Q_\phi(\xi)=0 \right\},
\]	
and 
\[
{\rm ind}(\Omega,\Sigma'')=\sup \left\{\dim V; V\subset \widehat{W}^{1/2}_{\phi[0]}(\Sigma'') \, {\rm and }\,Q_\phi|_V \,{\rm is}\, {\rm positive}\, {\rm definite}
\right\},
\]
respectively. 
\end{definition}

\begin{remark}\label{shim:r}
	It  has been proved in \cite{shimakura1981premiere} (in the classical Euclidean setting) that any $\Sigma'$-critical domain is {\em locally} strictly stable in the sense that for any $x_0\in\Sigma'$ there exists a small neighborhood $U\subset\Sigma'$ with $x_0\in U$ such that $\Omega$ is strictly $U$-stable. In \cite{delima1995local} this result has been reformulated in a more conceptual framework (again in the classical case) by means of the establishment of a Morse index formula for the quadratic form $Q_\phi$. We will check below that this latter result may be extended to our more general setting  with essentially the same proof (Theorem \ref{morse:form:w}).
\end{remark}

\begin{remark}\label{ref:cavnunes}
	The stability (or lack thereof) of (purely) Riemannian domains has been recently studied from a {\em global} viewpoint (i.e with $\Sigma'=\Sigma$) in \cite{cavalcante2024stability}, notably in the two-dimensional case ($n=2$).  In this regard, 
	it follows from Faber-Krahn inequality \cite{chavel1984eigenvalues} that a geodesic ball $B_r(x)\subset \mathscr S^n_K$ in a simply connected space form 
	$\mathscr S^n_K$ with sectional curvature $K\in \mathbb R$
	is the only (volume-preserving) minimizer for $\underline\lambda$. In particular, this geodesic ball is not only $\mathbb S^{n-1}_r(x)$-critical but also strictly $\mathbb S^{n-1}_r(x)$-stable, where $\mathbb S^{n-1}_r(x)=\partial B_r(x)$ is the associated geodesic sphere. An interesting problem, first put forward in  \cite{cavalcante2024stability}, is to check whether in the spherical case $K>0$ the converse of this statement (namely, if $\Omega\subset \mathscr S_K^n$ is $\Sigma$-stable then $\Omega$ is a geodesic ball) holds true. Despite an affirmative answer in \cite[Theorem 1.1]{cavalcante2024stability} for $n=2$, this problem remains wide open if $n\geq 3$, except for a further contribution in \cite[Theorem 1.4]{cavalcante2024stability}, where it is shown that the domain is a hemisphere in case its boundary is minimal. 
	\end{remark}
	
	\begin{remark}\label{anal:fb:gauss}
		A Faber-Krahn inequality also holds in the Gaussian setting, with the optimal domains being the Gaussian half-spaces
		\begin{equation}\label{gauss:dir:g}
			\mathcal H^n_{{\bf u},d}=\left\{
			x\in\mathbb R^n;\langle x,{\bf u}\rangle_\delta>d
			\right\}. 
		\end{equation}
		where ${\bf u}\in\mathbb R^n$, $\|{\bf u}\|=1$, and $d\in\mathbb R$  \cite[Theorem 3.1]{betta2007isoperimetric}. In particular, one may ask whether these are the only stable domains for $\underline\lambda$ in Gaussian space. In fact, Poincar\'e limit theorem (cf. Appendix \ref{app:poinc}) suggests that this should hold true if and only if the corresponding conjectured property for $\mathscr S^n_K$ mentioned in Remark \ref{ref:cavnunes} holds as well (at least for all $n$ large enough). 
		\end{remark}	

We now turn to the numerical invariants appearing in Definition \ref{index}. At first sight, it is not clear how the index/nullity relate to an algebraic count of negative/null ``eigenvalues'' of the quadratic form $Q_\phi$, which is defined in terms of the manifestly {\em non-local} operator $\mathcal L_\phi$. In particular, it is not even clear that these invariants are finite if $\overline{\Sigma'}$ is compact. However, as already mentioned in Remark \ref{shim:r}, standard results in elliptic theory ensured by our standing assumption (specially {\bf A.3}) may be used to settle this problem. A first ingredient here is the following G\"arding inequality satisfied by $Q_\phi$. 

\begin{proposition}\label{garding}
	There exist constants $C_1,C_2>0$ (depending only on $\Omega$) such that, 	for any $\xi\in W^{1/2}(\Sigma)$, 
	\begin{equation}\label{garding:1}
		Q_\phi(\xi)\geq C_1\|\xi\|^2_{W^{1/2}(\Sigma)}-C_2\|\xi\|^2_{W^{0}(\Sigma)},
		\end{equation}
where $W^{s}(\Sigma)$, $s\in\mathbb R$, is the standard Sobolev scale of $\Sigma$. 
	\end{proposition}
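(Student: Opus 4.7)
The plan is to rewrite $Q_\phi(\xi)$ as a Dirichlet energy in $\Omega$ and to extract coercivity from the elliptic package of assumption {\bf A.3}. Given $\xi\in W^{1/2}(\Sigma)$, I would first set $s_\xi\in W_\phi^1(\Omega)$ to be the extension of $\xi$ defined by (\ref{inter:step:2}); the isomorphism (\ref{iso:ell}), combined with the orthogonality condition $\int_\Omega s_\xi u\,d{\rm vol}_\phi=0$ that removes the one-dimensional kernel spanned by the first eigenfunction $u$, guarantees that $\xi\mapsto s_\xi$ is a bounded linear isomorphism onto its image, with the trace/extension pair of estimates $c_1\|\xi\|_{W^{1/2}(\Sigma)}^2 \le \|s_\xi\|_{W_\phi^1(\Omega)}^2 \le c_2\|\xi\|_{W^{1/2}(\Sigma)}^2$.

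Next, applying the weighted Green's identity (\ref{int:part}) with $f=h=s_\xi$ and inserting $\Delta_\phi s_\xi=-\underline{\lambda}\, s_\xi$ yields
\begin{equation*}
Q_\phi(\xi) \;=\; \int_\Omega |\nabla s_\xi|^2\,d{\rm vol}_\phi \;-\; \underline{\lambda}\int_\Omega s_\xi^2\,d{\rm vol}_\phi \;+\; \int_{\Sigma'} H_\phi\xi^2\,d{\rm area}_\phi.
\end{equation*}
The principal term already satisfies $\int_\Omega|\nabla s_\xi|^2\,d{\rm vol}_\phi \ge c_1\|\xi\|_{W^{1/2}(\Sigma)}^2 - \|s_\xi\|_{L^2_\phi(\Omega)}^2$ by the preceding norm equivalence. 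To convert the two $L^2$-type error terms into a multiple of $\|\xi\|_{L^2(\Sigma)}^2$, I would invoke the refined estimate $\|s_\xi\|_{L^2_\phi(\Omega)} \le C\|\xi\|_{L^2(\Sigma)}$, which follows by duality applied to the adjoint Dirichlet problem together with interpolation within the $W_\phi^s$-scale. Coupled with $\sup_\Sigma|H_\phi|<\infty$, which is implicit in the smoothness and growth conditions underlying our standing assumptions, this assembles into (\ref{garding:1}).

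The main technical obstacle I anticipate lies precisely in justifying the refined $L^2\to L^2$ bound $\|s_\xi\|_{L^2_\phi(\Omega)} \lesssim \|\xi\|_{L^2(\Sigma)}$ in the non-compact weighted setting: in the classical compact case it is a textbook consequence of duality applied to (\ref{iso:ell}), but here it requires controlling the weighted Sobolev scale uniformly at infinity, in the spirit of the discussions in Remarks \ref{package} and \ref{AAA:123}. A secondary bookkeeping issue is the codimension-one correction forced upon us by the fact that $\underline{\lambda}$ is itself an eigenvalue of $-\Delta_\phi$ on $\Omega$; this is already absorbed into (\ref{inter:step:2}) via the orthogonality constraint, but one must carry it consistently through all the estimates above.
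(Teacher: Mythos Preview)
Your route is sound and reaches the same conclusion, but it differs from the paper's in one key structural step. Instead of working directly with the eigenfunction extension $s_\xi$, the paper splits $s_\xi = h_\xi + w_\xi$, where $h_\xi$ is the $\phi$-\emph{harmonic} extension of $\xi$ (so $\Delta_\phi h_\xi=0$, $h_\xi|_\Sigma=\xi$) and $w_\xi\in W^1_{\phi[0]}(\Omega)$ solves $\Delta_\phi w_\xi=-\underline\lambda\,s_\xi$. Elliptic regularity then places $w_\xi$ two derivatives above $s_\xi$, so $\partial w_\xi/\partial\nu$ is one full order smoother than $\xi$; this exhibits $Q_\phi$ as the harmonic Dirichlet-to-Neumann form $\mathscr Q_\phi(\xi)=\int_\Sigma \xi\,\partial h_\xi/\partial\nu=\int_\Omega|\nabla h_\xi|^2\,d{\rm vol}_\phi$ plus a remainder bounded by $C\|\xi\|^2_{W^0(\Sigma)}$, and coercivity of $\mathscr Q_\phi$ follows from the isomorphism $\xi\mapsto h_\xi$ in the $W^s$-scale together with the embedding $W^0(\Sigma)\hookrightarrow W^{-1/2}(\Sigma)$. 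The decomposition buys two concrete things over your direct approach: (i) the harmonic extension exists for \emph{every} $\xi$ without any compatibility condition, so the codimension-one bookkeeping coming from the eigenvalue $\underline\lambda$ never enters; (ii) your ``refined $L^2\to L^2$'' estimate for $s_\xi$ is replaced by the standard transposition bound $\|h_\xi\|_{L^2_\phi(\Omega)}\le C\|\xi\|_{W^{-1/2}_\phi(\Sigma)}$, which is exactly the dual of (\ref{iso:ell}) at $s=0$ and involves no kernel. Your direct argument does go through---the duality estimate you want for $s_\xi$ can indeed be proved by testing against solutions of the adjoint homogeneous Dirichlet problem on $u^\perp$---but carrying the orthogonality constraint through is precisely the extra work the paper's decomposition is designed to avoid.
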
  
	
	\begin{proof}
	We follow the proof of \cite[Theorem 4.2]{shimakura1981premiere} closely (see also \cite{delima1995local} for a slightly different argument). Let us 
	consider the isomorphism 
		\begin{equation}\label{isom:he}
		\xi\in W^{s}_\phi(\Sigma)\mapsto h_\xi\in W_\phi^{s+1/2}(\Omega)  
	\end{equation}
	that to each $\xi$ associates its $\phi$-harmonic extension (that is, $\Delta_\phi h_\xi=0$ on $\Omega$); cf. (\ref{iso:ell}). 
	Also,
	let us set $w_\xi=s_\xi-h_\xi$, where $s_\xi\in W_\phi^{s+1/2}(\Omega)$ is given by (\ref{inter:step:2}). Since $\Delta_\phi w_\xi=-\underline\lambda s_\xi$, elliptic regularity implies that $w_\xi\in  W_\phi^{s+5/2}(\Omega)$. Hence, standard trace theory gives that $\partial \xi/\partial\nu=\partial w_\xi/\partial\nu\in W_\phi^{s+1}(\Gamma)$, so that the operator $\xi\in W_\phi^{s}(\Sigma)\mapsto \mathcal L_\phi\xi\in W_\phi^{s+1}(\Sigma)$ has order $-1$. Thus, if 
	\begin{equation}\label{def:q2}
	\mathscr Q_\phi(\xi)=\int_\Sigma \xi\frac{\partial\mathfrak s_\xi}{\partial\nu}d{\rm vol}_\phi
	\end{equation}
	for $\xi\in W_\phi^{1/2}(\Sigma)\subset W_\phi^{0}(\Sigma)$,
	we may apply this with $s=0$ to obtain
	\[
	\left|Q_\phi(\xi)-\mathscr Q_\phi(\xi)\right|\leq C_3\|f\|^2_{W_\phi^0(\Sigma)}, \quad C_2>0,
	\]
	from which we see that 
	\[
	Q_\phi(\xi)\geq \mathscr Q_\phi(\xi)-C_3\|\xi\|^2_{W^0(\Sigma)}. 
	\]
	On the other hand, by (\ref{def:q2}) and (\ref{int:part}),
	\begin{eqnarray*}
		\mathscr Q_\phi(\xi)
		& = & \int_\Omega|\nabla h_\xi|^2 d{\rm vol}_\phi\\
		& = & \|h_\xi\|^2_{W^1_\phi(\Omega)}- \|h_\xi\|^2_{W^0_\phi(\Omega)}\\
		& \geq 	& C_1\|\xi\|^2_{W_\phi^{1/2}(\Sigma)} - C_4\|\xi\|^2_{W_\phi^{-1/2}(\Sigma)},
	\end{eqnarray*}
	where $C_1, C_4>0$ and we used (\ref{isom:he}) with $s=1/2$ and $s=0$ in the last step. Now, the Sobolev embedding $W_\phi^{0}(\Sigma)\hookrightarrow W_\phi^{-1/2}(\Sigma)$ gives 
	\[
	\|\xi\|_{W_\phi^{-1/2}(\Sigma)}\leq C_5\|\xi\|_{W_\phi^{0}(\Sigma)}, \quad C_5>0,
	\]
	so we obtain (\ref{garding:1}) with $C_2=C_3+C_4C_5^2$. 
	\end{proof}
	
	We may now state the weighted version of the Morse index formula proved in \cite{delima1995local}.
	
	\begin{theorem}\label{morse:form:w}
		Let $\Sigma_1\subset \Sigma$ be such that
		$\Omega$ is $\Sigma_1$-critical with $\overline{\Sigma_1}$ compact. Then ${\rm ind}(\Sigma_1)<+\infty$. Moreover, if $\Sigma_0\subset \Sigma_1$ 
		 and there exists a smooth deformation $\Sigma_t\subset\Sigma$, $0\leq t\leq 1$, connecting $\Sigma_0$ to $\Sigma_1$
		 then there holds 
		 \begin{equation}\label{morse:form:w:2}
		 {\rm ind}(\Omega,\Sigma_1)-{\rm ind}(\Omega,\Sigma_0)=\sum_{0<t<1} {\rm null}(\Omega,\Sigma_t). 
		 \end{equation}
		 		\end{theorem}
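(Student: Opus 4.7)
The plan is to adapt the argument of \cite{delima1995local} to the weighted setting, leveraging the G\"arding estimate of Proposition \ref{garding} together with our standing assumption \textbf{A.3} to reduce the spectral analysis of $Q_\phi$ to that of a compact self-adjoint operator, after which the Morse-theoretic index count is a matter of abstract perturbation theory.

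First I would establish the finiteness assertion. Since $\overline{\Sigma_1}$ is compact, the embedding $W^{1/2}(\Sigma_1)\hookrightarrow L^2_\phi(\Sigma_1)$ is compact. The G\"arding inequality (\ref{garding:1}) says that the bilinear form associated with $Q_\phi+C_2\langle\cdot,\cdot\rangle_{L^2_\phi(\Sigma_1)}$ is coercive on $W^{1/2}_{\phi[0]}(\Sigma_1)$, so by Lax--Milgram together with the compact embedding it is represented, in the $L^2_\phi$ inner product, by a compact, positive, self-adjoint operator $K$ on $L^2_\phi(\Sigma_1)$. The eigenvalues $\mu_j\searrow 0$ of $K$ correspond bijectively to eigenvalues $\lambda_j=\mu_j^{-1}-C_2\nearrow+\infty$ of the original form $Q_\phi$; only finitely many can be non-positive, so ${\rm ind}(\Omega,\Sigma_1)$ and ${\rm null}(\Omega,\Sigma_t)$ are finite (passing to the codimension-one mean-zero subspace $\widehat{W}^{1/2}_{\phi[0]}(\Sigma_1)$ alters indices by at most one, and the finiteness persists).

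Next I would realize the deformation as a smooth one-parameter family of forms on a fixed Hilbert space. The smooth family $\Sigma_t\subset\Sigma$ may be parametrized by a smooth family of diffeomorphisms $\Phi_t:\Sigma_0\to\Sigma_t$ (extended as needed to a neighbourhood inside $\Omega$). Pulling back, $Q_\phi^{(t)}$ becomes a smoothly varying family of symmetric forms $\tilde Q^{(t)}$ on the fixed space $\widehat{W}^{1/2}_{\phi[0]}(\Sigma_0)$, and by continuity of the geometric data the G\"arding constants may be taken uniform on $[0,1]$. The heart of the matter is to show that the corresponding self-adjoint operators vary, say, in the norm-resolvent sense: this is where assumption \textbf{A.3} is most critical, since the principal (nonlocal) component of $\mathcal{L}_\phi$ is built from the Dirichlet extension problem (\ref{inter:step:2}), whose solution operator must be shown to depend continuously on the moving pair $(\Omega_t,\Sigma_t)$.

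With a norm-continuous family in hand, classical Kato perturbation theory produces continuous eigenvalue curves $t\mapsto\lambda_j(t)$, and the uniform G\"arding estimate yields an $N$ independent of $t$ with $\lambda_j(t)>0$ for all $j>N$. Hence ${\rm ind}(\Omega,\Sigma_t)=\#\{j\leq N:\lambda_j(t)<0\}$ and ${\rm null}(\Omega,\Sigma_t)=\#\{j\leq N:\lambda_j(t)=0\}$. The identity (\ref{morse:form:w:2}) then falls out by counting zero-crossings of the finitely many curves $\lambda_1(t),\dots,\lambda_N(t)$: each crossing at an interior time $t_*\in(0,1)$ contributes its multiplicity to ${\rm null}(\Omega,\Sigma_{t_*})$ and changes ${\rm ind}(\Omega,\Sigma_t)$ by the same amount, so the telescoping sum equals ${\rm ind}(\Omega,\Sigma_1)-{\rm ind}(\Omega,\Sigma_0)$ (under the implicit assumption that the endpoints are nondegenerate, i.e.\ no $\lambda_j$ vanishes at $t=0$ or $t=1$). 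The main obstacle I anticipate is precisely the norm-continuity of the family $\tilde Q^{(t)}$: since $\mathcal{L}_\phi$ encodes a Dirichlet-to-Neumann-type component, its continuous dependence on the deforming domain has to be extracted from elliptic theory on moving domains, but once that is secured the rest reduces to standard perturbation-theoretic counting.
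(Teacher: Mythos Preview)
Your finiteness argument via G\aa rding plus compact embedding is fine and is essentially the content of Proposition~\ref{garding}. The gap is in the derivation of the index formula~(\ref{morse:form:w:2}). Your ``zero-crossing'' telescoping argument is not valid as stated: for a merely continuous family of eigenvalue curves $\lambda_j(t)$, a curve may touch zero and return without crossing, or cross first downward and later upward, so that $\sum_t{\rm null}(\Omega,\Sigma_t)$ need not equal ${\rm ind}(\Omega,\Sigma_1)-{\rm ind}(\Omega,\Sigma_0)$. What makes the count work is \emph{monotonicity}: since the domains $\Sigma_t$ are nested, the spaces $\widehat W^{1/2}_{\phi[0]}(\Sigma_t)$ form an increasing chain of subspaces of a single ambient space on which $Q_\phi$ is fixed, and min-max forces each $\lambda_j(t)$ to be non-increasing in $t$. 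Your pullback-by-diffeomorphism scheme, which trades the nested subspaces for a varying family of operators on a fixed space, obscures exactly this structure and is the wrong reduction.

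Even granting monotonicity, one must still rule out that some $\lambda_j$ sits at zero on a whole subinterval (which would make the right side of~(\ref{morse:form:w:2}) infinite, or at least overcounted). This is where the paper supplies an ingredient you do not mention: a \emph{unique continuation property} for $\mathcal L_\phi$. If $\mathcal L_\phi\xi=\lambda\xi$ and $\xi$ vanishes on an open set $U\subset\Sigma_1$, then both Cauchy data of the interior extension $s_\xi$ (solving $(\Delta_\phi+\underline\lambda)s_\xi=0$) vanish on $U$; H\"olmgren's theorem then forces $s_\xi\equiv 0$ near $U$, and interior unique continuation for the elliptic operator $\Delta_\phi+\underline\lambda$ gives $s_\xi\equiv 0$ on $\Omega$, hence $\xi\equiv 0$. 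This UCP is precisely what guarantees that a null eigenfunction for $\Sigma_{t_*}$ cannot already be supported in a smaller $\Sigma_{t'}$, so conjugate times are isolated and each contributes its nullity exactly once to the index jump. The paper packages G\aa rding $+$ UCP into the abstract Morse-index framework of \cite{frid1990abstract}, from which~(\ref{morse:form:w:2}) follows directly; the norm-resolvent continuity you flag as the main obstacle is in fact not the crux.
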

		 		
	\begin{corollary}\label{shima:cor} \cite{shimakura1981premiere} If $\Omega$ is $\Sigma'$-critical and $x\in\Sigma'$ then there exits $U\subset\Sigma'$ with $x\in U$ such that $\Omega$ is strictly $U$-stable.		
		\end{corollary}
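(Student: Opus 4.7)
The plan is to argue by contradiction, using the G\aa rding inequality (Proposition \ref{garding}) together with a local compactness argument; this bypasses the full Morse index machinery of Theorem \ref{morse:form:w}.

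Suppose the conclusion fails at some $x\in\Sigma'$. Then there is a sequence $\{U_n\}$ of open neighborhoods of $x$ in $\Sigma'$ with $\mathrm{diam}\,U_n\to 0$ such that, for each $n$, $Q_\phi$ is not positive definite on $\widehat W^{1/2}_{\phi[0]}(U_n)$. Pick $\xi_n\in\widehat W^{1/2}_{\phi[0]}(U_n)$ (viewed as extended by zero to $\Sigma$) with $Q_\phi(\xi_n)\leq 0$ and, after normalization, $\|\xi_n\|_{W^{1/2}(\Sigma)}=1$. Substituting into (\ref{garding:1}) yields
\[
0 \;\geq\; Q_\phi(\xi_n) \;\geq\; C_1 - C_2\|\xi_n\|_{W^0(\Sigma)}^2,
\]
so $\|\xi_n\|_{L^2(\Sigma)}^2 \geq C_1/C_2 > 0$ for every $n$.

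Next, I would derive the opposite conclusion. Fix a precompact open set $V\subset\Sigma$ containing all the $U_n$. By the reflexivity of $W^{1/2}$, a subsequence (still denoted $\xi_n$) converges weakly to some $\xi_\infty\in W^{1/2}(\Sigma)$. Since $\mathrm{supp}\,\xi_n\subset U_n$ shrinks to $\{x\}$, for every smooth $\varphi$ vanishing in a neighborhood of $x$ one has $\int_\Sigma \xi_n \varphi \to 0$, whence $\xi_\infty$ vanishes a.e.\ on $\Sigma\setminus\{x\}$ and therefore $\xi_\infty=0$ in $L^2(\Sigma)$. The compact embedding $W^{1/2}(V)\hookrightarrow L^2(V)$ then upgrades weak convergence to strong convergence on $V$, so $\|\xi_n\|_{L^2(\Sigma)}=\|\xi_n\|_{L^2(V)}\to 0$, contradicting the lower bound obtained in the preceding paragraph.

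The main obstacle is to make sure the two ingredients above (G\aa rding and local Rellich-Kondrachov compactness) truly hold in the weighted, possibly non-compact setting; both are already built into the framework by Proposition \ref{garding} together with the standing assumptions \textbf{A.1}--\textbf{A.3} (on the precompact patch $V$, Remark \ref{equiv:sob} reduces the weighted Sobolev theory to the classical one, and the usual Rellich-Kondrachov theorem applies). Note in passing that the zero-mean constraint defining $\widehat W^{1/2}_{\phi[0]}(U)$ is never used, so the same argument in fact delivers the slightly stronger conclusion that $Q_\phi$ is positive definite on all of $W^{1/2}_{\phi[0]}(U)$ for $U$ sufficiently small.
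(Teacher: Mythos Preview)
Your argument is correct and is genuinely different from the paper's route. In the paper, Corollary \ref{shima:cor} is presented as an immediate consequence of the full Morse index formula (Theorem \ref{morse:form:w}), whose proof in turn invokes the abstract machinery of \cite{frid1990abstract} and requires \emph{both} the G\aa rding inequality and the unique continuation property for $\mathcal L_\phi$. Your proof, by contrast, is a direct contradiction argument using only the G\aa rding inequality (Proposition \ref{garding}) together with the local Rellich--Kondrachov compactness $W^{1/2}(V)\hookrightarrow L^2(V)$ on a fixed precompact patch $V$; no UCP and no index theory are needed. This is closer in spirit to Shimakura's original argument in \cite{shimakura1981premiere} and is more elementary for the local statement at hand. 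The trade-off is clear: the paper's approach yields the much stronger global information encoded in (\ref{morse:form:w:2}) (finiteness of the index and its computation as a sum of nullities), from which local strict stability drops out as a by-product, whereas your argument is tailored to the local conclusion and gives nothing more---but also demands nothing more. Your closing observation that the zero-mean constraint plays no role, so that $Q_\phi$ is in fact positive definite on all of $W^{1/2}_{\phi[0]}(U)$ for small $U$, is a nice bonus.
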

		
		\begin{proof} (of Theorem \ref{morse:form:w}) We resort to the abstract strategy in \cite{frid1990abstract}, so we only need to check the validity of two assertions regarding $\mathcal L_\phi$ and $Q_\phi$:
			\begin{enumerate}
				\item $Q_\phi$ satisfies a G\"arding inequality;
				\item $\mathcal L_\phi$ satisfies the {\em unique continuation property (UCP)} : if $\xi$ meets $\mathcal L_\phi\xi=\lambda \xi$, $\lambda\in\mathbb R$, and $\xi\equiv 0$ in some neighborhood $U\subset\Sigma_1$ then $\xi\equiv 0$ on $\Sigma$. 
			\end{enumerate}
			Now, the appropriate G\"arding inequality has already been established in Proposition \ref{garding}. As for UCP, notice that $\partial \xi/\partial \nu=$ on $U$. Since $\mathscr M_\phi=\Delta_\phi+\underline\lambda$ is elliptic, its principal symbol never vanishes and hence $U$ is {\em non-characteristic} for $\mathscr M_\phi$. Thus, by  H\"olmgren's uniqueness, $s_\xi$ vanishes in a neighborhood of $U$ (inside $\Omega$). But $\mathscr M_\phi$ is known to satisfy its own version of the UCP, which implies that $s_\xi\equiv 0$ and hence $\xi\equiv 0$  on $\Sigma$, as desired. This completes the proof. 
			\end{proof} 
			
			\begin{remark}\label{many:pot}
				 Among the many interesting (potential) applications of Theorem \ref{morse:form:w} we mention:
				\begin{enumerate}
					\item the annular domains in Remark \ref{cons:crit} are strictly $U$-stable if $U\subset \mathbb S^{n-1}_{r_i}(x_i)$ is small enough;
					\item any of the many critical domains constructed more recently by perturbative methods (see \cite{delay2013extremal} and the references therein) are locally strictly stable;
					\item very likely, the perturbative results mentioned in the previous item should admit counterparts in the weighted setting, so that local stability of critical domains should extend to this case as well;
					\item the nullity/index can now be identified to an algebraic count of negative/null eigenvalues of the associated quadratic form, which opens up the possibility of effective computations of these invariants in examples.  
				\end{enumerate} 
				\end{remark}

\section{The variatonal formulae for $J_{\beta,\gamma}$}\label{dich:energy}

We now turn to the Dirichlet energy $\mathcal E_{\beta,f}$ in Example \ref{ex:eq:lim}. We start by exhibiting an important example where the existence of a minimizer for the corresponding Dirichlet integral $J_{\beta,f}$ gets justified; cf. Remark \ref{suff:cond:inv}. 

\begin{example}\label{gauss:dir}
	In the Gaussian setting (\ref{gauss:def}), 
	the half-spaces $\mathcal H^n_{{\bf u},0}$ defined in (\ref{gauss:dir:g}) satisfy  
 $\underline\lambda(\mathcal H^n_{{\bf u},0})=1$ (with the corresponding eigenspace being generated by $u(x)=\langle x,{\bf u}\rangle_\delta$). Thus, 
	if $\Omega\varsubsetneq \mathcal H^n_{{\bf u},0}$ then $\underline\lambda(\Omega)> 1$ by eigenvalue  monotonicity, so that (\ref{dich:min:0}) has a solution $w$ for each $\beta\geq -1$ by Remark \ref{suff:cond:inv}, which means that $\mathcal E_{\beta,f}(\Omega)=J_{\beta,f}(w)$. As we shall  below, at least if $f\equiv\gamma\in\mathbb R$ the existence of this ``minimizer'' function $w$ will allow us to use variational methods to probe the nature of optimal domains for the problem of minimizing $\mathcal E_{\beta,f}(\Omega)$, where $\Omega\in \mathscr D_V$, $V={\rm vol}_{\phi_n}(\mathcal H^n_{{\bf u},d})$, $d>0$. 
	\end{example}	
	
As already observed, the general problem of characterizing optimal domains for $\mathcal E_{\beta,f}$, even in the rather special case of Example \ref{gauss:dir}, seem to lie beyond the current technology.	
Nevertheless, we may try to use the variational methods discussed above to obtain some insight on the properties an optimal domain should satisfy, at least in the case $f\equiv \gamma$, a real constant. As in Example \ref{gauss:dir} and taking Remark \ref{suff:cond:inv} into account, we will always assume below that $\beta>\underline{\lambda}(\Omega)$, so that $\mathcal E_{\beta,\gamma}(\Omega)=J_{\beta,\gamma}(w)$ with $w$ being the solution of (\ref{dich:min:0}) with $f=\gamma$. 
Granted this, it is a consequence of the first variation formula (\ref{f:der:en}) for $J_{\beta,\gamma}$ that optimal domains for $\mathcal E_{\beta,\gamma}$ satisfy the over-determined elliptic system (\ref{dich:min:o}) below. 

\begin{proposition}\label{opt:prop:f}
	If $\beta>\underline{\lambda}(\Omega)$ with $\Omega$ being optimal in the sense $\mathscr E_{V,\beta,\gamma}=\mathcal E_{\beta,\gamma}(\Omega)$ then the corresponding ``minimizer'' function $w$ (that is, $\mathcal E_{\beta,\gamma}(\Omega)=J_{c,\gamma}(w)$) satisfies the over-determined system
	\begin{equation}\label{dich:min:o}
		\left\{
		\begin{array}{lc}
			-\Delta_\phi w+\beta w=\gamma &  \Omega\\
			w=0  & \Sigma\\
			|\nabla w|= c & \Sigma
		\end{array}
		\right.
	\end{equation} 
	for some $c> 0$. 
	\end{proposition}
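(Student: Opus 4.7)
The plan is to mirror the strategy used for $\underline\lambda$ in Proposition \ref{first:var:p}, but now applied to the functional $\mathcal E_{\beta,\gamma}(\Omega_t)=J_{\beta,\gamma}(w_t)$ along a $\phi$-volume-preserving deformation $\Omega_t=\varphi_t(\Omega)$ with variational function $\xi=\langle v,\nu\rangle$. For each small $t$, the minimizer $w_t:\overline{\Omega_t}\to\mathbb R$ solves $-\Delta_\phi w_t+\beta w_t=\gamma$ with $w_t|_{\Sigma_t}=0$, and by the standard elliptic package (assumption \textbf{A.3}) depends smoothly on $t$. Differentiating the boundary condition at $t=0$ yields $\dot w=-(\partial w/\partial\nu)\,\xi$ on $\Sigma$, while differentiating the equation gives $-\Delta_\phi\dot w+\beta\dot w=0$ in $\Omega$. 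These are the analogues of the relations obtained in the proof of Proposition \ref{first:var:p}.

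Next, I would compute the first variation of $\mathcal E_{\beta,\gamma}$ by applying the bulk formula (\ref{main:formulae:b}) with $\Psi=\tfrac12|\nabla w_t|^2+\tfrac\beta2 w_t^2-\gamma w_t$. Since $w=0$ on $\Sigma$, the boundary piece collapses to $\tfrac12\int_\Sigma(\partial w/\partial\nu)^2\xi\,d\mathrm{area}_\phi$. For the interior piece, the integrand at $t=0$ is $\langle\nabla w,\nabla\dot w\rangle+\beta w\dot w-\gamma\dot w$; integration by parts via (\ref{int:part}) with $h=w$, $f=\dot w$ converts $\int_\Omega\langle\nabla w,\nabla\dot w\rangle d\mathrm{vol}_\phi$ into $-\int_\Omega\dot w\,\Delta_\phi w\,d\mathrm{vol}_\phi+\int_\Sigma\dot w(\partial w/\partial\nu)\,d\mathrm{area}_\phi$. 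Using $\Delta_\phi w=\beta w-\gamma$, the interior terms cancel against the remaining $\int_\Omega(\beta w-\gamma)\dot w\,d\mathrm{vol}_\phi$, and the boundary term becomes $-\int_\Sigma(\partial w/\partial\nu)^2\xi\,d\mathrm{area}_\phi$. Combining with the surface contribution yields
\begin{equation*}
\dot{\mathcal E}_{\beta,\gamma}=-\frac12\int_\Sigma\left(\frac{\partial w}{\partial\nu}\right)^2\xi\,d\mathrm{area}_\phi,
\end{equation*}
which is precisely the first variation formula (\ref{f:der:en}) advertised in the proposition.

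Finally, I would invoke optimality: since $\Omega$ minimizes $\mathcal E_{\beta,\gamma}$ over $\mathscr D_V$, one has $\dot{\mathcal E}_{\beta,\gamma}=0$ for every admissible variation. By Remark \ref{app:var:gen}, the admissible variational functions are precisely the (compactly supported) smooth $\xi$ on $\Sigma$ satisfying $\int_\Sigma\xi\,d\mathrm{area}_\phi=0$. Hence the function $(\partial w/\partial\nu)^2$ on $\Sigma$ is $L^2_\phi$-orthogonal to every mean-zero test function, which forces it to be constant (the Lagrange multiplier coming from the $\phi$-volume constraint). To conclude that this constant is strictly positive and to extract the absolute value, I would note that $w\not\equiv 0$ (else $\gamma=0$ contradicts the assumption that $w$ is a non-trivial minimizer when $\gamma\ne 0$, while the case $\gamma=0$ leaves (\ref{dich:min:o}) with $c=0$ trivially), and then appeal to the Hopf boundary-point lemma for the operator $-\Delta_\phi+\beta$ (valid since $\beta$ lies in the coercive range $\beta>-\underline\lambda(\Omega)$) to ensure $\partial w/\partial\nu$ never vanishes on $\Sigma$. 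Setting $c:=|\partial w/\partial\nu|>0$ and using $w=0$ on $\Sigma$ (so $|\nabla w|=|\partial w/\partial\nu|$) completes (\ref{dich:min:o}).

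The main obstacle I expect is the last step: justifying that $c>0$ requires some control on the sign or non-degeneracy of $w$. In the Gaussian setting of Example \ref{gauss:dir}, for $\gamma$ of constant sign and $\beta>-\underline\lambda(\Omega)$, the maximum principle readily yields $w$ of constant sign and Hopf applies; in full generality this relies on the standing assumptions guaranteeing the standard elliptic package, together with the requirement that the case $\gamma\equiv 0$ be excluded (or handled separately as a degenerate instance).
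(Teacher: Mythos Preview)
Your argument is correct and follows the paper's proof essentially step for step: apply (\ref{main:formulae:b}) to $\Psi_t=\tfrac12|\nabla w_t|^2+\tfrac\beta2 w_t^2-\gamma w_t$, integrate by parts via (\ref{int:part}) using $\Delta_\phi w=\beta w-\gamma$, and invoke (\ref{cond:mean}) to force $(\partial w/\partial\nu)^2$ to be constant on $\Sigma$. You in fact go slightly further than the paper in arguing that $c>0$ via the Hopf boundary lemma; the paper simply records $c=|\partial w/\partial\nu|$ without further comment.
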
 
	
	\begin{proof}
		If $V={\rm vol}_\phi(\Omega_0)$ and $t\in(-\epsilon,\epsilon)\mapsto \Omega_t\in \mathscr D_V$ is a smooth $\phi$-volume-preserving variation of $\Omega$ with $\Omega_0=\Omega$ then optimality of $\Omega$ together with the assumption $\beta>\underline{\lambda}(\Omega)$ imply that $J_{\beta,\gamma}(w)\leq J_{\beta,\gamma}(w_t)$, where $w_t$ is the solution of (\ref{dich:min:0}) with $\Omega=\Omega_t$ (and $f=\gamma$). Hence, $\dot{J}_{\beta,\gamma}=0$, where as always the dot means derivative at $t=0$. We now compute this derivative using the formalism stemming from Proposition \ref{main:formulae}. From (\ref{j:func:0}),
		\[
		J_{\beta,\gamma}(w_t)=\int_{\Omega_t}\Psi_t d{\rm vol}_\phi, \quad \Psi_t=\frac{1}{2}|\nabla w_t|^2+\frac{\beta}{2}w_t^2-\gamma w_t,
		\] 
		so that, by (\ref{main:formulae:b}),
		\begin{equation}\label{j:f:d}
			\dot{J}_{\beta,\gamma}
			 =  \int_{\Omega}\left(\langle \nabla w_0,\nabla \dot{w}\rangle+\beta w\dot{w}-\gamma\dot{w}\right)d{\rm vol}_\phi+\frac{1}{2}\int_\Sigma |\nabla w|^2\eta\, d{\rm area}_\eta, 
		\end{equation}
		where $\eta$ is the variational function and $\dot{w}$ satisfies 
		\begin{equation}\label{lin:eq}
			\left\{
		\begin{array}{lc}
			-\Delta_\phi \dot w+\beta\dot w=0 &  \Omega\\
			\dot w=-\frac{\partial w}{\partial \nu} \eta & \Sigma
		\end{array}
		\right.
		\end{equation}
		Now, by (\ref{int:part}), 
		\begin{eqnarray*}
			\int_{\Omega} \langle \nabla w,\nabla \dot{w}\rangle d{\rm vol}_\phi 
				& = & 
			-\int_\Omega \dot w\Delta_\phi w \,d{\rm vol}_\phi+\int_\Sigma \dot w\frac{\partial w_0}{\partial\nu}\,d{\rm area}_\phi \\
			& {=} &  
			-\int_\Omega \dot w(\beta w-\gamma) d{\rm vol}_\phi+
			\int_\Sigma \left(-\frac{\partial w}{\partial \nu}\eta\right)\frac{\partial w}{\partial \nu}d{\rm area}_\phi,
		\end{eqnarray*}
		and leading this to (\ref{j:f:d}) we see that 
		\begin{equation}\label{f:der:en}
		\dot J_{\beta,\gamma}=-\frac{1}{2}\int_\Sigma \left(\frac{\partial w}{\partial\nu}\right)^2\eta\, d{\rm area}_\phi.
		\end{equation}
		The result then follows (with $c=|\partial w/\partial \nu|$) because we already know that this derivative vanishes for any $\eta$ such that $\int_\Sigma\eta\, d{\rm area}_\phi=0$. 
	\end{proof}  
	
	As in Section \ref{var:form}, this proposition justifies a notion of $\Sigma$-criticality for $J_{\beta,\gamma}$, so that optimal domains for $\mathcal E_{\beta,\gamma}$ are automatically $\Sigma$-critical and hence satisfy (\ref{dich:min:o}). Naturally, the question remains of classifying such domains in each specific case. The inherent difficulty in approaching this problem is best illustrated by certain examples in Gaussian space taken from \cite[Section 2.3]{buttazzo2013some}.

	\begin{example}\label{butazzo}
		In Gaussian space of Example \ref{ex:gauss:pro}, the following domains are easily verified to be $\Sigma$-critical for $J_{-1,1}$ (equivalently, they support a function $w$ satisfying (\ref{dich:min:o}) for suitable values of $\beta$):
		\begin{enumerate}
			\item round balls centered at the origin;
			\item the complements of the balls in the previous item;
			\item the slabs $\{x\in\mathbb R^n;|\langle x,{\bf u}\rangle|<\varepsilon\}$, where $\varepsilon>0$ and $|{\bf u}|=1$; 
			\item the half-spaces $\mathcal H^n_{{\bf u},d}$ in (\ref{gauss:dir:g}); in this case the explicity solution to (\ref{dich:min:o}) for $d\geq 0$ is $w(x)=\beta x_1-1$ with $\beta d=1$.
		\end{enumerate}
		\end{example}
	
	Note the the set of all domains in each class of examples above exhausts the possible values for the $\phi_n$-volume of a proper domain in Gaussian space, which indicates that the problem of deciding which  $\Sigma$-critical domains are optimal is far from trivial given that at least four contenders compete for each value of the volume. As a first step toward approaching this difficulty we may look at how $J_{\beta,\gamma}$ varies to second order around such a domain. Thus, we proceed by computing $\ddot{J}_{\beta,\gamma}$ along variations passing through the given $\Sigma$-critical domain.      

We first observe that, as in Section \ref{var:form}, criticality here also implies that any $\eta\in H^{1/2}(\Sigma)$ admits a natural extension to $\Omega$. Indeed, it follows from (\ref{lin:eq}) that ${\mathfrak r}_\eta:=-(\partial w/\partial \nu)^{-1}\dot w$ satisfies 
	\begin{equation}\label{lin:eq:n}
	\left\{
	\begin{array}{lc}
		-\Delta_\phi {\mathfrak r}_\eta +\beta {\mathfrak r}_\eta=0 &  \Omega\\
		{\mathfrak r}_\eta= \eta & \Sigma
	\end{array}
	\right.
\end{equation}  	
which allows us to set $\partial \eta/\partial\nu:=\partial {\mathfrak r}_\eta/\partial\nu$ along $\Sigma$.

\begin{proposition}\label{sec:var:en}
	If $\Omega$ is $\Sigma$-critical for $J_{\beta,\gamma}$ with $\beta\neq 0$ then 
		\begin{equation}\label{sec:var:en:1}
		\ddot{J}_{\beta,\gamma}=c^2
		\mathscr Q_{\beta,\lambda}(\eta), \quad c=\left|\frac{\partial w}{\partial\nu}\right|,
	\end{equation}
	where the quadratic form $	\mathscr Q_{\beta,\gamma}$ is given by
	\begin{equation}\label{sec:var:en:2}
	\mathscr Q_{\beta,\gamma}(\eta)=\int_\Sigma \eta \mathcal L_{\beta,\gamma}\eta\,d{\rm area}_\phi, \quad \mathcal L_{\beta,\gamma}\eta=\frac{\partial\eta}{\partial\nu}+H_\phi\eta. 
	\end{equation}
	\end{proposition}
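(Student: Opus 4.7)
The plan is to follow the template of the proof of Proposition \ref{sec:var:f:w:l} for $\underline\lambda$, adapting each step to the present setting. The starting point is the first variation formula (\ref{f:der:en}): viewing
\[
\dot J_{\beta,\gamma}(t)=-\frac{1}{2}\int_{\Sigma_t}\left(\frac{\partial w_t}{\partial\nu_t}\right)^{\!2}\eta_t\,d{\rm area}_\phi
\]
as a function of $t$ and applying (\ref{main:formulae:s}) with $\Psi=\langle\nabla w,\nu\rangle^2\,{\mathfrak r}_\eta$ (the ambient extension being obtained from the natural extension of $\nu$ to a tubular neighborhood of $\Sigma$ together with the extension ${\mathfrak r}_\eta$ of $\eta$ supplied by (\ref{lin:eq:n})), we get
\[
\ddot J_{\beta,\gamma}=-\frac{1}{2}\int_\Sigma\bigl[\,\dot\Psi+(\langle\nabla\Psi,\nu\rangle+H_\phi\Psi)\,\eta\,\bigr]\,d{\rm area}_\phi.
\]

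Next, I would evaluate these boundary ingredients one by one. By $\Sigma$-criticality, $a:=\partial w/\partial\nu$ is constant on $\Sigma$ with $a^2=c^2$; restricting the PDE $-\Delta_\phi w+\beta w=\gamma$ to $\Sigma$ (where $w=0$) and invoking the standard decomposition $\Delta_\phi w=H_\phi\,\partial w/\partial\nu+(\nabla^2 w)(\nu,\nu)$ produces the Hessian identity
\[
(\nabla^2 w)(\nu,\nu)=-\gamma-aH_\phi,
\]
which plays here the role of (\ref{H:comp}). Using this, the pieces $\nu(|\nabla w|^2)=2a\,(\nabla^2 w)(\nu,\nu)$ and $\langle\nabla\Psi,\nu\rangle+H_\phi\Psi$ become explicit on $\Sigma$. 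For the remaining derivative $\dot\Psi$, the choice $v=\eta\nu$ (which is without loss of generality) gives $\dot\nu=-D\eta$, so that $\langle\nabla w,\dot\nu\rangle=0$; and since $\dot w$ satisfies (\ref{lin:eq}) with boundary data $-a\eta$, the relationship $\dot w=-a\,{\mathfrak r}_\eta$ throughout $\Omega$ forces $\partial\dot w/\partial\nu=-a\,\partial\eta/\partial\nu$.

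With these ingredients in hand, I would substitute into the integrand, collect the resulting terms, and use the second-order constraint (\ref{cond:mean:2}) coming from $\phi$-volume preservation to eliminate the $\dot\eta$ contribution, exactly as in the final step of the proof of Proposition \ref{sec:var:f:w:l}. The surviving combination should then reorganize into $c^2\int_\Sigma\eta\,(\partial\eta/\partial\nu+H_\phi\eta)\,d{\rm area}_\phi$, which is precisely $c^2\mathscr Q_{\beta,\gamma}(\eta)$.

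The main obstacle I anticipate is the careful tracking of the $\gamma$-dependent contributions produced by the modified Hessian identity: these are absent in the eigenvalue case of Proposition \ref{sec:var:f:w:l} and one must verify that, after using (\ref{cond:mean:2}), they reassemble consistently into the simple quadratic form (\ref{sec:var:en:2}). The hypothesis $\beta\neq 0$ enters (at the very least) through the solvability of (\ref{lin:eq:n}) needed to make the extension ${\mathfrak r}_\eta$ well-defined and thus to give meaning to $\partial\eta/\partial\nu$.
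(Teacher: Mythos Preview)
Your proposal is essentially the same argument as the paper's: start from (\ref{f:der:en}), apply (\ref{main:formulae:s}) with $\Psi=\langle\nabla w,\nu\rangle^2\eta$, use the modified Hessian identity $(\nabla^2 w)(\nu,\nu)=-\tfrac{\partial w}{\partial\nu}H_\phi-\gamma$, and then clean up with the volume-preserving constraints. One small correction: the extra $\gamma$-contribution you anticipate appears in the paper as the term $-\tfrac{\gamma}{2}\tfrac{\partial w}{\partial\nu}\int_\Sigma\eta\,d{\rm area}_\phi$, and it is killed by the \emph{first}-order constraint (\ref{cond:mean}), not by (\ref{cond:mean:2}); the latter is what eliminates the $\dot\eta$ term, exactly as you say. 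Also, the role you assign to $\beta\neq 0$ is not quite right: solvability of (\ref{lin:eq:n}) requires that $-\beta$ avoid the Dirichlet spectrum of $\Delta_\phi$, not that $\beta\neq 0$ (the paper itself does not use $\beta\neq 0$ in the proof either).
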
 	
	
	\begin{proof}
		This uses essentially the same argument as in the previous computation of $\ddot{\underline\lambda}$ leading to (\ref{shim:q}). Indeed, 
	it follows from (\ref{f:der:en}) and (\ref{main:formulae:s}) that 
	\begin{equation}\label{var:gen:en}
		-{\ddot J}_{\beta,\gamma}=\frac{1}{2}\int_{\Sigma}\left(\dot\Psi+\left(\langle \nabla \Psi,\nu\rangle+\Psi H_\phi\right)\eta \right)d{\rm area}_{\phi}, \quad \Psi=\langle\nabla w_t,\nu_t\rangle^2\eta_t.	
	\end{equation}
	As before, we may assume that the variational vector field is normal along $\Sigma$, which immediately gives  
	\[
	\dot\Psi=-2c^2\xi\frac{\partial \xi}{\partial\nu}+c^2\dot\xi. 
	\]	
	On the other hand, 
	the only novelty in the computation of the remaining term inside the integral in (\ref{var:gen:en}) is that instead of (\ref{H:comp}) we now have 
	\[
	(\nabla^2 w)(\nu,\nu)=-\frac{\partial w}{\partial\nu}H_\phi-\gamma,
	\] 
	so we end up with 
	\begin{eqnarray*}
	{\ddot J}_{\beta,\gamma}
	& = & 
	c^2\int_{\Sigma} \eta\left(\frac{\partial \eta}{\partial\nu}+H_\phi\eta\right)d{\rm area}_{\phi}
	-\frac{\gamma}{2}\frac{\partial w}{\partial \nu}\int_{\Sigma_0}\eta\,d{\rm area}_{\phi}\\
	& & \quad 
	 -\frac{c^2}{2}\int_{\Sigma}\left(\dot\eta+\left(\frac{\partial\eta}{\partial\nu}+H_\phi\eta\right)\eta\right)d{\rm area}_{\phi},
	\end{eqnarray*}
	which reduces to (\ref{sec:var:en:2}) if we recall that the variation is $\phi$-volume-preserving and use (\ref{cond:mean}) and (\ref{cond:mean:2}) with $\xi=\eta$. 
	\end{proof}

We stress the formal similarity between (\ref{sec:var:en:1})-(\ref{sec:var:en:2}) and (\ref{shim:q})-(\ref{shim:qq}), the only essential difference being in the way the variational functions extend to $\Omega$ in each case. In particular, we can define here the notions corresponding  to those in Definitions \ref{stab} and \ref{index} above, so that the same argument as in the proof of Theorem \ref{morse:form:w} yields the following result. 

\begin{theorem}\label{morse:dirich}
	The appropriate Morse index formula, similar to (\ref{morse:form:w:2}), holds in the present setting. In particular, any $\Sigma'$-critical domain is locally strictly stable. 
\end{theorem}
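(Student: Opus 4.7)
The plan is to follow, essentially verbatim, the abstract Morse-theoretic strategy of \cite{frid1990abstract} which was already used for Theorem \ref{morse:form:w}. All that needs to be checked is that the pair $(\mathcal L_{\beta,\gamma},\mathscr Q_{\beta,\gamma})$ enjoys the same two structural properties as $(\mathcal L_\phi, Q_\phi)$ did, namely a G\aa rding inequality on $W^{1/2}_\phi(\Sigma)$ and the unique continuation property (UCP) along $\Sigma$. Once these are in place, finiteness of ${\rm ind}(\Omega,\Sigma_1)$, the index-jump formula analogous to (\ref{morse:form:w:2}), and the local strict stability assertion (via the analogue of Corollary \ref{shima:cor}) are immediate consequences of the abstract machinery, with no further modifications.

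To establish the G\aa rding inequality, I would adapt the proof of Proposition \ref{garding} by trading the auxiliary problem (\ref{inter:step:2}) for the defining problem (\ref{lin:eq:n}) of $\mathfrak r_\eta$. Since the operating assumption $\beta>-\underline\lambda(\Omega)$ from Remark \ref{suff:cond:inv} makes $-\Delta_\phi+\beta$ an isomorphism from $W^{s+2}_\phi(\Omega)\cap W^1_{\phi[0]}(\Omega)$ onto $W^s_\phi(\Omega)$ (cf.\ the elliptic package \textbf{A.3}), I would write $\mathfrak r_\eta=h_\eta+z_\eta$, where $h_\eta$ is the $\phi$-harmonic extension supplied by the isomorphism (\ref{isom:he}) and $z_\eta$ solves $(-\Delta_\phi+\beta)z_\eta=-\beta h_\eta$ with zero Dirichlet data. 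Elliptic regularity gains two derivatives, hence $\partial z_\eta/\partial\nu\in W_\phi^{s+1}(\Sigma)$, so the difference between $\mathcal L_{\beta,\gamma}$ and the leading-order operator $\eta\mapsto \partial h_\eta/\partial\nu+H_\phi\eta$ has order $-1$. Consequently, up to an error controlled by $\|\eta\|^2_{W_\phi^0(\Sigma)}$, $\mathscr Q_{\beta,\gamma}(\eta)$ coincides with $\int_\Omega|\nabla h_\eta|^2d{\rm vol}_\phi+\int_\Sigma H_\phi\eta^2 d{\rm area}_\phi$, and the $W^{1/2}_\phi(\Sigma)$-coercivity is extracted exactly as in the proof of Proposition \ref{garding}.

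The UCP is equally straightforward. Suppose $\mathcal L_{\beta,\gamma}\eta=\mu\eta$ on $\Sigma'$ for some $\mu\in\mathbb R$ and $\eta\equiv 0$ on an open set $U\subset\Sigma'$. Then along $U$ both $\mathfrak r_\eta=0$ and $\partial\mathfrak r_\eta/\partial\nu=\partial\eta/\partial\nu=-H_\phi\eta+\mu\eta=0$. Because $U$ is a non-characteristic hypersurface for the elliptic operator $-\Delta_\phi+\beta$, Holmgren's uniqueness forces $\mathfrak r_\eta$ to vanish in an $\Omega$-sided neighborhood of $U$; the standard UCP for second-order elliptic operators with smooth coefficients then propagates this vanishing throughout the connected component of $\Omega$, and taking the trace gives $\eta\equiv 0$ on $\Sigma$.

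With (i) and (ii) established, I would invoke \cite{frid1990abstract} word for word to conclude the finiteness of ${\rm ind}(\Omega,\Sigma_1)$, the Morse-type identity and, via Corollary \ref{shima:cor}, local strict stability. The main obstacle is really only a bookkeeping one: one must verify that the elliptic package \textbf{A.3} delivers the right trace theorems, two-derivative gain and Holmgren/UCP even when $\Omega$ is unbounded, but this is guaranteed by our standing assumptions (cf.\ Remarks \ref{package} and \ref{AAA:123}). No genuinely new analytic input beyond what was already needed for Theorem \ref{morse:form:w} is required.
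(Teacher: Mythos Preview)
Your proposal is correct and follows precisely the route the paper intends: the paper does not write out a separate proof for Theorem \ref{morse:dirich} but simply notes that ``the same argument as in the proof of Theorem \ref{morse:form:w} yields the following result,'' and what you have done is spell out that argument by verifying the G\aa rding inequality (via the decomposition $\mathfrak r_\eta=h_\eta+z_\eta$, which is the direct analogue of $s_\xi=h_\xi+w_\xi$ in Proposition \ref{garding}) and the UCP (via Holmgren applied to $-\Delta_\phi+\beta$ in place of $\Delta_\phi+\underline\lambda$). There is no substantive difference in approach.
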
  

\begin{remark}\label{all}
As a consequence of Theorem \ref{morse:dirich}, all the $\Sigma$-critical domains (for $J_{-1,1}$) in Example \ref{butazzo} are locally strictly stable, thus being variationally indistinguishable from this viewpoint.  
\end{remark}

\section{A Pohozhaev identity and optimal domains for $\mathcal E_{-1,1}$ in Gaussian half-space}\label{class:opt}

As already observed in the Introduction and confirmed by Remark \ref{morse:dirich}, in general (infinitesimal) variational methods by themselves do not seem to provide effective tools for the classification of  
optimal domains for Dirichlet integrals. Thus, we henceforth seek for alternate routes by resorting to {\em global} methods relying on suitable differential/integral identities.    
We first illustrate this approach by means of the next result, which completely classifies (smooth) optimal domains for $\mathcal E_{-1,1}$ in the Gaussian half-space $\mathcal H^n_{{\bf u},0}$. It constitutes the exact analogue of a result in the spherical case \cite[Theorem 1.1]{ciraolo2019serrin} and we refer to Remark \ref{motiv:poin} for details on the heuristics behind this analogy, which relies on a naive application of Poincar\'e limit theorem (Appendix \ref{app:poinc}).

\begin{theorem}\label{class:th}
	If $\Omega\varsubsetneq \mathcal H^n_{{\bf u},0}$ is a $\Sigma$-critical domain for $J_{-1,1}$ in Gaussian space then $\Omega=\mathcal H^n_{{\bf u},d}$, $d>0$. 
	\end{theorem}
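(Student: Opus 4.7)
The plan is to convert the $\Sigma$-criticality assumption into an overdetermined PDE for $w$ via Proposition \ref{opt:prop:f}, and then apply the Pohozhaev identity of Proposition \ref{pohoz:ident} in order to extract enough rigidity to force $w$ to be affine. Concretely, Proposition \ref{opt:prop:f} gives
\[
-\Delta_{\phi_n} w - w = 1 \;\;\text{in }\Omega, \qquad w=0 \;\;\text{on }\Sigma, \qquad |\nabla w|=c \;\;\text{on }\Sigma,
\]
for some constant $c>0$ (the fact that $\underline\lambda(\Omega)>1=\underline\lambda(\mathcal H^n_{\mathbf u,0})$ by eigenvalue monotonicity guarantees solvability). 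A convenient reformulation is obtained by setting $\psi:=w+1$, which turns the system into the overdetermined eigenvalue problem $-\Delta_{\phi_n}\psi=\psi$ in $\Omega$ with $\psi=1$ and $\partial\psi/\partial\nu=-c$ along $\Sigma$; in particular the model case $\psi_0(x)=c\langle x,\mathbf u\rangle$ on $\mathcal H^n_{\mathbf u,1/c}$ is the ``benchmark'' one aims to isolate.

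The next step is to plug $\psi$ into the Pohozhaev identity of Proposition \ref{pohoz:ident}. Because $\operatorname{Ric}_{\phi_n}=g$ in Gaussian space, the weighted Bochner identity collapses to
\[
\tfrac{1}{2}\Delta_{\phi_n}|\nabla\psi|^2 = |\nabla^2\psi|^2 + |\nabla\psi|^2 + \langle\nabla\psi,\nabla\Delta_{\phi_n}\psi\rangle = |\nabla^2\psi|^2,
\]
so $|\nabla\psi|^2$ is $\phi$-subharmonic. Integrating over $\Omega$ and using that on $\Sigma$ one has $\nabla\psi=-c\nu$ together with the identity $(\nabla^2\psi)(\nu,\nu)=-1-c\langle x,\nu\rangle$ (derived by decomposing $\Delta_g\psi$ into tangential and normal parts exactly as in the proof of Proposition \ref{sec:var:f:w:l}), one obtains an explicit boundary expression for $\int_\Omega |\nabla^2\psi|^2\,d\mathrm{vol}_\phi$. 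The role of the Pohozhaev identity is precisely to couple this boundary expression with the Gaussian divergence identity $\int_\Sigma\langle x,\nu\rangle\,d\mathrm{area}_\phi=\int_\Omega(n-|x|^2)\,d\mathrm{vol}_\phi$ and with the test-function identities $c\,\mathrm{area}_\phi(\Sigma)=\int_\Omega\psi\,d\mathrm{vol}_\phi$ and $\int_\Omega|\nabla\psi|^2\,d\mathrm{vol}_\phi+c\,\mathrm{area}_\phi(\Sigma)=\int_\Omega\psi^2\,d\mathrm{vol}_\phi$. The resulting algebraic combination should force
\[
\int_\Omega |\nabla^2\psi|^2\,d\mathrm{vol}_{\phi_n}=0,
\]
and hence $\nabla^2\psi\equiv 0$ in $\Omega$.

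From $\nabla^2\psi\equiv 0$ we conclude $\psi(x)=\langle a,x\rangle+b$ for constants $a\in\mathbb R^n$ and $b\in\mathbb R$; substituting into $-\Delta_{\phi_n}\psi=\psi$ gives $\langle a,x\rangle=\langle a,x\rangle+b$, i.e.\ $b=0$, and $|\nabla\psi|=c$ forces $|a|=c$. Writing $a=c\mathbf u'$, the boundary condition $\psi|_\Sigma=1$ identifies $\Sigma$ as the hyperplane $\{\langle x,\mathbf u'\rangle=1/c\}$, and the maximum principle ($\psi>0$ in $\Omega$) identifies $\Omega$ as the open half-space $\mathcal H^n_{\mathbf u',1/c}$. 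The final inclusion $\Omega\subsetneq\mathcal H^n_{\mathbf u,0}$ pins $\mathbf u'=\mathbf u$ with $d:=1/c>0$, giving $\Omega=\mathcal H^n_{\mathbf u,d}$.

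The most delicate step is the one yielding $\int_\Omega|\nabla^2\psi|^2=0$: the Pohozhaev/Reilly identity alone provides a boundary-bulk balance, and without the supplementary divergence and testing identities above the bulk term is sign-ambiguous. The Gaussian case enjoys the decisive feature $\operatorname{Ric}_{\phi_n}=g$, which precisely cancels the cross term in Bochner and should be what renders the non-negative bulk integrand $|\nabla^2\psi|^2$ compatible with the computable boundary side. A secondary technical point is justifying all integrations by parts in spite of the non-compactness of $\Omega$; this is handled by the Gaussian decay of $d\mathrm{vol}_{\phi_n}$ together with the polynomial-growth and regularity framework set up in Remark \ref{AAA:123}.
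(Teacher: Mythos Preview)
Your overall architecture is right---convert $\Sigma$-criticality into the overdetermined system, exploit the Gaussian Bochner identity to see that $|\nabla w|^2$ (equivalently $|\nabla\psi|^2$) is $\phi_n$-subharmonic, and aim for $\nabla^2 w\equiv 0$---but the heart of the argument is only gestured at, and as written it does not close. Three concrete issues:

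\begin{itemize}
\item The boundary formula you quote is wrong: along $\Sigma$ one has $(\nabla^2\psi)(\nu,\nu)=\Delta_\delta\psi-\Delta_\Sigma\psi-H\psi_\nu=-1-c\langle x,\nu\rangle+cH=-1+cH_{\phi_n}$, not $-1-c\langle x,\nu\rangle$; the missing mean-curvature term is precisely what makes the direct unweighted integration of $\Delta_{\phi_n}|\nabla\psi|^2=2|\nabla^2\psi|^2$ produce an uncontrolled $\int_\Sigma H_{\phi_n}$ rather than a closed identity.
\item You never specify which parallel field $X$ to use in the Pohozhaev identity, and the ``Gaussian divergence identity'' you list involves the \emph{position} field $x$, which is not parallel; Proposition~\ref{pohoz:ident} therefore does not apply to it.
\item Most importantly, you omit the two ingredients that actually deliver a sign: the maximum principle (applied to the $\phi_n$-subharmonic function $|\nabla w|^2$ with boundary value $c^2$) giving $|\nabla w|\le c$ on $\Omega$, and the weight $x_1=\langle x,\mathbf u\rangle\ge 0$ coming from the hypothesis $\Omega\subset\mathcal H^n_{\mathbf u,0}$.
\end{itemize}

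The paper's proof runs exactly through these: from $|\nabla w|\le c$ and $x_1\ge 0$ one gets $c^2\!\int_\Omega x_1\ge\int_\Omega x_1|\nabla w|^2$, and the Pohozhaev identity with the specific choice $X=-\mathbf u$ (so $\mathrm{div}_{\phi_n}X=x_1$) forces this to be an equality, hence $|\nabla w|\equiv c$ and then $\nabla^2w\equiv 0$ by the equality case in Bochner. Your ``testing identities'' $c\,\mathrm{area}_\phi(\Sigma)=\int_\Omega\psi$ etc.\ are correct but are not what is needed; without the weight $x_1$ and the maximum-principle inequality they do not combine into anything with a definite sign. So the step you flag as ``most delicate'' is not merely delicate---it is the whole argument, and it requires the specific mechanism above rather than an unspecified algebraic combination.
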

	
	\begin{corollary}\label{class:cor}
		If $\Omega\varsubsetneq \mathcal H^n_{{\bf u},0}$ is optimal for $\mathcal E_{-1,1}$ then $\Omega=\mathcal H^n_{{\bf u},d}$, $d>0$.
		\end{corollary}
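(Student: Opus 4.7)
By Proposition \ref{opt:prop:f} (or more precisely the first-variation computation \eqref{f:der:en} upon which it rests, which applies to any $\Sigma$-critical, not merely optimal, domain), the $\Sigma$-criticality hypothesis on $\Omega\subsetneq \mathcal H^n_{\mathbf u,0}$ supplies a solution $w:\overline\Omega\to\mathbb R$ to the over-determined boundary value problem \eqref{dich:min:o} with $\beta=-1$ and $\gamma=1$, namely
\[
-\Delta_\phi w-w=1 \text{ in }\Omega, \qquad w=0 \text{ on }\Sigma, \qquad |\nabla w|=c \text{ on }\Sigma
\]
for some constant $c>0$; existence of $w$ is justified as in Example \ref{gauss:dir} by $\underline\lambda(\Omega)>1=\underline\lambda(\mathcal H^n_{\mathbf u,0})$, which follows from strict monotonicity of the first Dirichlet eigenvalue. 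A natural auxiliary object is the linear function $v(x)=\langle x,\mathbf u\rangle_\delta$, which is positive on $\Omega$ and, by direct computation using $\nabla\phi_n=x$, satisfies $\Delta_\phi v+v=0$. Thus $v$ is the first Dirichlet eigenfunction of $-\Delta_\phi$ on the ambient half-space $\mathcal H^n_{\mathbf u,0}$, a fact that will play the role analogous to the ``polar'' height function on a hemisphere in the Ciraolo--Vezzoni setting.

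The plan is then to combine the Pohozhaev identity of Proposition \ref{pohoz:ident}---which by the heuristics of Remark \ref{motiv:poin} is meant to be the Poincar\'e limit of the spherical Pohozhaev identity used in \cite{ciraolo2019serrin}---with two elementary integral relations obtained by testing the PDE against $1$ and against $v$. Using $w=0$ and $\partial w/\partial\nu=-c$ on $\Sigma$ together with self-adjointness of $\Delta_\phi$ and $\Delta_\phi v=-v$, these two testings yield
\[
c\int_\Sigma\, d\mathrm{area}_\phi=\mathrm{vol}_\phi(\Omega)+\int_\Omega w\,d\mathrm{vol}_\phi,\qquad c\int_\Sigma v\,d\mathrm{area}_\phi=\int_\Omega v\,d\mathrm{vol}_\phi.
\]
The Pohozhaev identity, applied to $w$ with the vector field provided by $\nabla v=\mathbf u$, should after use of the over-determined conditions produce a third integral relation coupling $c^{2}\int_\Sigma v\, d\mathrm{area}_\phi$ to weighted bulk integrals of $|\nabla w|^{2}$ and $w^{2}$ over $\Omega$, with $v$ as weight.

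I expect these three identities to chain into a Cauchy--Schwarz (or Heintze--Karcher) type inequality whose strict form depends on the ``defect'' between $\nabla w$ and a multiple of $\mathbf u$, and whose equality case is forced upon us by the over-determined system. The equality case should imply $\nabla w\parallel\mathbf u$ everywhere on $\Omega$, so $w$ depends only on $v$; since $w$ vanishes on $\Sigma$, the boundary $\Sigma$ must then be a level set of $v$, i.e.\ an affine hyperplane orthogonal to $\mathbf u$, say $\{v=d\}$. Connectedness of $\Omega$ combined with $\Omega\subsetneq\mathcal H^{n}_{\mathbf u,0}$ then forces $\Omega=\mathcal H^n_{\mathbf u,d}$ with $d>0$, and the relation $cd=1$ coming from the explicit solution in Example \ref{butazzo} serves as a consistency check. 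The main obstacle is pinpointing the precise form of the Pohozhaev identity that ``selects'' the Gaussian geometry sharply enough for the resulting inequality to degenerate to equality only on half-spaces; the Poincar\'e-limit heuristic dictates the form one should look for, but the equality analysis has to be performed intrinsically in the Gaussian setting rather than merely transplanted from the spherical case.
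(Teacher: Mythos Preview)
Your proposal correctly extracts the over-determined system from optimality via Proposition~\ref{opt:prop:f}, and your two testing identities are fine. But note that in the paper this corollary is immediate from Theorem~\ref{class:th}: optimal $\Rightarrow$ $\Sigma$-critical $\Rightarrow$ half-space. What you are really outlining is an alternative proof of Theorem~\ref{class:th} itself, and that outline has a genuine gap.

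The paper's rigidity mechanism has two halves, and you are missing one of them. The Pohozhaev identity (Proposition~\ref{pohoz:ident}) with $X=-\mathbf u$, once you unwind the integration-by-parts as in Proposition~\ref{charact:fin}, yields only an \emph{equality}, namely
\[
\int_\Omega v\,|\nabla w|^2\,d{\rm vol}_{\phi_n}=c^2\int_\Omega v\,d{\rm vol}_{\phi_n}.
\]
By itself this forces nothing. The companion inequality the paper feeds against it is \emph{pointwise}: from the weighted Bochner formula one has $\Delta_{\phi_n}\big(|\nabla w|^2\big)\ge 0$ on $\Omega$ (Proposition~\ref{weitz:conseq}), so by the maximum principle $|\nabla w|\le c$ everywhere. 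Since $v>0$ on $\Omega$, the displayed identity then forces $|\nabla w|\equiv c$, equality in Bochner gives $\nabla^2 w\equiv 0$, and $w$ is affine. Your sketch never invokes this $P$-function/Bochner/maximum-principle step; your proposed ``Cauchy--Schwarz (or Heintze--Karcher) type inequality'' remains purely speculative, and it is not clear what integral inequality could replace the pointwise bound $|\nabla w|\le c$ so as to make the Pohozhaev equality bite. Indeed, your own first testing identity involves $\int_\Omega w$ and $\mathrm{vol}_\phi(\Omega)$, quantities that do not appear in the Pohozhaev computation and seem to have no leverage here.

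In short: the Pohozhaev identity alone cannot ``select the Gaussian geometry sharply enough'' --- that selection is done by the subharmonicity of $|\nabla w|^2$ coming from Bochner, which is exactly the Poincar\'e-limit shadow of the Weinberger $P$-function mentioned in Remark~\ref{motiv:poin}. You need to supply that half.
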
	

A key ingredient in our proof of Theorem \ref{class:th} is the following Pohozhaev-type identity in Gaussian space. 

\begin{proposition}\label{pohoz:ident}  
	If $w:\mathbb R^n\to\mathbb R$ is a $C^2$ function and $X$ is parallel (in the sense that $\nabla X\equiv 0$, where $\nabla$ is the covariant derivative induced by the flat metric $\delta$) then 
	\begin{equation}\label{pohoz:ident:2}
		{\rm div}_{\phi_n}\left(\frac{|\nabla w|^2}{2}X-\langle X,\nabla w\rangle\nabla w\right)=\frac{|\nabla w|^2}{2}{\rm div}_{\phi_n} X-\langle X,\nabla w\rangle\Delta_{\phi_n} w.
		\end{equation}
	\end{proposition}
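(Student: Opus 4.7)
The plan is to reduce the claimed weighted identity to the classical flat Pohozhaev-type identity by exploiting the linear relations
\[
{\rm div}_{\phi_n}=\,{\rm div}\,-\,\langle\nabla\phi_n,\cdot\,\rangle,\qquad \Delta_{\phi_n}=\Delta-\langle\nabla\phi_n,\nabla\,\cdot\,\rangle,
\]
so that the weight enters the identity only linearly and can be separated out by a simple regrouping at the end.

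The first step is to establish the unweighted identity, namely
\[
{\rm div}\!\left(\tfrac{|\nabla w|^2}{2}X-\langle X,\nabla w\rangle\nabla w\right)=\tfrac{|\nabla w|^2}{2}{\rm div}\,X-\langle X,\nabla w\rangle\Delta w,
\]
in flat $\mathbb R^n$. For this I would compute the divergence of each of the two vector-fields on the left separately via the product rule. The first term yields $\tfrac{|\nabla w|^2}{2}{\rm div}\,X+\tfrac12\langle\nabla|\nabla w|^2,X\rangle$, and the identity $\nabla|\nabla w|^2=2\nabla^2 w(\nabla w,\cdot)$ converts the second summand into $\nabla^2 w(\nabla w,X)$. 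The second term produces $\langle X,\nabla w\rangle\Delta w+\langle\nabla\langle X,\nabla w\rangle,\nabla w\rangle$; here the hypothesis $\nabla X\equiv 0$ is used crucially to kill the contribution of $\nabla X$ in the inner derivative, so that $\nabla\langle X,\nabla w\rangle=\nabla^2 w(X,\cdot)$ and the remaining cross term equals $\nabla^2 w(X,\nabla w)$. Subtracting the two expressions, the cross-Hessian terms cancel by symmetry of $\nabla^2 w$, leaving exactly the claimed flat identity.

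The second step is to upgrade to the weighted version. Applying ${\rm div}_{\phi_n}Y={\rm div}\,Y-\langle\nabla\phi_n,Y\rangle$ with
\[
Y=\tfrac{|\nabla w|^2}{2}X-\langle X,\nabla w\rangle\nabla w
\]
and substituting the flat identity proved above, one obtains
\[
\tfrac{|\nabla w|^2}{2}{\rm div}\,X-\langle X,\nabla w\rangle\Delta w-\tfrac{|\nabla w|^2}{2}\langle\nabla\phi_n,X\rangle+\langle X,\nabla w\rangle\langle\nabla\phi_n,\nabla w\rangle.
\]
Grouping the first and third terms into $\tfrac{|\nabla w|^2}{2}{\rm div}_{\phi_n}X$ and the second and fourth into $-\langle X,\nabla w\rangle\Delta_{\phi_n}w$ produces exactly (\ref{pohoz:ident:2}).

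There is no genuine obstacle: the computation is mechanical and the role of the parallelism assumption is simply to ensure that $\nabla\langle X,\nabla w\rangle$ reduces to a Hessian term that cancels its symmetric counterpart. The only point worth flagging is the use of the specific form of $\phi_n$: the argument in fact works for an \emph{arbitrary} smooth weight, so the identity is not special to the Gaussian setting, although it will be applied with $\phi=\phi_n$ in the sequel.
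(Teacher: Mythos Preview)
Your proof is correct and follows essentially the same approach as the paper: both rely on the product rule for the divergence and the cancellation of the Hessian cross-terms coming from the parallelism of $X$. The only cosmetic difference is that the paper applies the product rule directly to the weighted divergence ${\rm div}_{\phi_n}$ (using ${\rm div}_{\phi_n}(fY)=f\,{\rm div}_{\phi_n}Y+\langle\nabla f,Y\rangle$) rather than first proving the unweighted identity and then adding the weight correction, but the underlying computation is identical; your remark that the identity holds for an arbitrary smooth weight is correct and worth noting.
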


\begin{proof}
	We have 
	\[
	{\rm div}_{\phi_n}\left(\frac{|\nabla w|^2}{2}X\right)=\frac{|\nabla w|^2}{2}{\rm div}_{\phi_n} X +
	\left\langle \nabla\left(\frac{|\nabla w|^2}{2}\right),X\right\rangle
	\]
	and 
	\[
		{\rm div}_{\phi_n}\left(\langle X,\nabla w\rangle\nabla w\right)=\langle X,\nabla w\rangle{\rm div}_{\phi_n}\nabla w+
		\langle\nabla\langle X,\nabla w\rangle,\nabla w\rangle, 
	\]
	and since 
	\[
	\left\langle \nabla\left(\frac{|\nabla w|^2}{2}\right),X\right\rangle=
		\langle\nabla\langle X,\nabla w\rangle,\nabla w\rangle
	\]
	because $X$ is parallel, the result follows. 
\end{proof}

We now observe that, by Proposition \ref{opt:prop:f}, any $\Omega$ satisfying the conditions of Theorem \ref{class:th} supports a function $w$ such that 
	\begin{equation}\label{dich:min:bvp}
	\left\{
	\begin{array}{lc}
		-\Delta_{\phi_n} w-w=1 &  \Omega\\
		w=0  & \Sigma \\
			|\nabla w| =c &  \Sigma	
	\end{array}
	\right.
\end{equation} 
for some $c>0$. This leads to our next preparatory result.

\begin{proposition}\label{weitz:conseq}
	If $w:\Omega\to\mathbb R$ satisfies (\ref{dich:min:bvp}) then 
	\begin{equation}\label{weitz:conseq:2}
	\Delta_{\phi_n}\left(|\nabla w|^2\right)\geq 0,
	\end{equation}
	with the equality occurring if and only if $\nabla^2w\equiv 0$. 
	\end{proposition}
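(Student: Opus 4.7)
The plan is to apply the weighted Bochner formula to $w$ and exploit a remarkable cancellation that is specific to Gaussian space: the Bakry--\'Emery Ricci tensor equals the flat metric, producing a term of size $|\nabla w|^2$ that is exactly compensated by the gradient-of-Laplacian term coming from the PDE.

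First I would recall the weighted Bochner identity, valid on any weighted Riemannian manifold,
\[
\tfrac{1}{2}\Delta_\phi |\nabla w|^2 = |\nabla^2 w|^2 + \langle \nabla w, \nabla \Delta_\phi w\rangle + \mathrm{Ric}_\phi(\nabla w, \nabla w),
\]
where $\mathrm{Ric}_\phi$ is the Bakry--\'Emery tensor from (\ref{bakry:ricci}). This follows by adding the standard Riemannian Bochner formula to the elementary identity $\tfrac{1}{2}\langle \nabla \phi,\nabla|\nabla w|^2\rangle = \nabla^2 w(\nabla w,\nabla\phi)$ and the easy computation $\langle\nabla w,\nabla\langle\nabla\phi,\nabla w\rangle\rangle = \nabla^2\phi(\nabla w,\nabla w) + \nabla^2 w(\nabla\phi,\nabla w)$; everything that is not curvature cancels.

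Next I would specialize to Gaussian space. Since $\phi_n(x) = |x|^2/2 + \tfrac{n}{2}\log(2\pi)$, we have $\nabla^2 \phi_n = \delta$, and of course $\mathrm{Ric}_\delta = 0$, so $\mathrm{Ric}_{\phi_n} = \delta$ and therefore
\[
\mathrm{Ric}_{\phi_n}(\nabla w,\nabla w) = |\nabla w|^2.
\]
At the same time, the interior PDE in (\ref{dich:min:bvp}) reads $\Delta_{\phi_n} w = -w - 1$, so taking the gradient gives $\nabla \Delta_{\phi_n} w = -\nabla w$ and hence
\[
\langle \nabla w, \nabla \Delta_{\phi_n} w\rangle = -|\nabla w|^2.
\]
Plugging both into the weighted Bochner identity, the two $|\nabla w|^2$ contributions cancel and one is left with
\[
\tfrac{1}{2}\Delta_{\phi_n} |\nabla w|^2 = |\nabla^2 w|^2 \geq 0,
\]
which is exactly (\ref{weitz:conseq:2}), and with equality on $\Omega$ if and only if $\nabla^2 w \equiv 0$ there.

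There is essentially no obstacle: elliptic regularity (guaranteed by our standing assumption \textbf{A.3}) upgrades the $C^2$ hypothesis on $w$ to the smoothness needed to differentiate $|\nabla w|^2$ twice, and the boundary conditions $w = 0$ and $|\nabla w| = c$ on $\Sigma$ play no role in this pointwise identity (they will be exploited subsequently, presumably through integration of (\ref{weitz:conseq:2}) against a suitable test function, in the proof of Theorem \ref{class:th}). The only conceptual point worth stressing is that the lower bound $\mathrm{Ric}_{\phi_n} \geq \delta$ is sharp and the PDE is finely tuned to the eigenvalue $\underline\lambda(\mathcal H^n_{{\bf u},0}) = 1$, so the cancellation is a reflection of the rigidity already built into the problem.
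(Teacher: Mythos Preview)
Your proof is correct and follows essentially the same route as the paper: apply the weighted Bochner--Weitzenb\"ock identity in Gaussian space (where ${\rm Ric}_{\phi_n}=\delta$), substitute $\Delta_{\phi_n}w=-w-1$ from the PDE, and observe that the $|\nabla w|^2$ contributions cancel, leaving $\tfrac{1}{2}\Delta_{\phi_n}|\nabla w|^2=|\nabla^2 w|^2\geq 0$ with equality iff $\nabla^2 w\equiv 0$. The paper's argument is the same, only written more tersely.
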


\begin{proof}
	In the Gaussian case, the weighted Bochner-Weitzenb\"ock identity says that 
	\[
	\frac{1}{2}\Delta_{\phi_n}\left(|\nabla w|^2\right)=|\nabla^2w|^2+\langle \nabla w,\nabla(\Delta_{\phi_n} w)\rangle+|\nabla w|^2;
	\]
	see \cite[Appendix C.5]{bakry2014analysis}. 
	It follows that 
	\begin{eqnarray*}
		\frac{1}{2}\Delta_{\phi_n}\left(|\nabla w|^2\right)
		& \geq & \langle \nabla w,\nabla(-w-1)\rangle +|\nabla w|^2\\
		& = & 0, 
	\end{eqnarray*}
	with the equality occurring if and only if $|\nabla^2w|=0$.
\end{proof}

The final ingredient in the proof of Theorem \ref{class:th} is the next rigidity result for solutions of (\ref{dich:min:bvp}) in $\Omega\varsubsetneq \mathcal H^n_{{\bf u},0}$, in whose proof the Pohozhaev in Proposition \ref{pohoz:ident} plays a key role. 

\begin{proposition}\label{charact:fin}
	If $w$ satisfies (\ref{dich:min:bvp}) and $\Omega\varsubsetneq \mathcal H^n_{{\bf u},0}$ then $|\nabla w|=c$ everywhere on $\Omega$. 
	\end{proposition}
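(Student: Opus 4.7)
The plan is to show $P := |\nabla w|^2 \equiv c^2$ on $\Omega$ by combining Proposition \ref{weitz:conseq} with a weighted maximum principle and two complementary integral identities: one coming from Proposition \ref{pohoz:ident}, and one from testing the equation for $v := w+1$ (which satisfies $-\Delta_{\phi_n} v = v$ in $\Omega$, $v|_\Sigma = 1$, $\partial_\nu v|_\Sigma = -c$) against the positive weight $\langle x, {\bf u}\rangle$.

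First I would establish $P \leq c^2$ throughout $\Omega$. Since $\Omega$ need not be bounded, the elementary maximum principle does not apply directly; instead, noting that $(P - c^2)_+$ vanishes on $\Sigma$ and, under the standing regularity hypotheses (see Remark \ref{AAA:123}), lies in $W^1_{\phi_n[0]}(\Omega)$, I would use it as a test function. By Proposition \ref{weitz:conseq}, $\Delta_{\phi_n}(P - c^2) = 2|\nabla^2 w|^2 \geq 0$, so the integrand $(P - c^2)_+\,\Delta_{\phi_n}(P - c^2)$ is non-negative; an integration by parts (whose boundary term vanishes) then gives
\[
0 \leq \int_\Omega (P - c^2)_+\,\Delta_{\phi_n}(P - c^2)\, d{\rm vol}_{\phi_n} = -\int_\Omega |\nabla (P - c^2)_+|^2\, d{\rm vol}_{\phi_n} \leq 0,
\]
forcing $(P - c^2)_+$ to be constant and, vanishing on $\Sigma$, identically zero.

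Next I apply Proposition \ref{pohoz:ident} with the parallel field $X = {\bf u}$. Since ${\rm div}_{\phi_n} {\bf u} = -\langle x, {\bf u}\rangle$ and $\Delta_{\phi_n} w = -(w+1)$, integrating (\ref{pohoz:ident:2}) over $\Omega$ via the weighted divergence theorem, using $\nabla w|_\Sigma = -c\nu$ together with $\int_\Sigma \langle {\bf u},\nu\rangle\, d{\rm area}_{\phi_n} = -I$ where $I := \int_\Omega \langle x,{\bf u}\rangle\, d{\rm vol}_{\phi_n}$, and rewriting $\int_\Omega \langle {\bf u},\nabla w\rangle(w+1)\, d{\rm vol}_{\phi_n} = \frac{1}{2}\int_\Omega \langle {\bf u},\nabla(v^2)\rangle\, d{\rm vol}_{\phi_n}$ via a further integration by parts in the ${\bf u}$-direction (using $v|_\Sigma = 1$), one arrives at the Pohozhaev identity
\[
\int_\Omega v^2 \langle x,{\bf u}\rangle\, d{\rm vol}_{\phi_n} \;=\; (c^2+1)\,I + \int_\Omega P\, \langle x,{\bf u}\rangle\, d{\rm vol}_{\phi_n}.
\]
Separately, multiplying $-\Delta_{\phi_n} v = v$ by $v\,\langle x,{\bf u}\rangle$ and integrating by parts, together with the auxiliary relation $c\int_\Sigma \langle x,{\bf u}\rangle\, d{\rm area}_{\phi_n} = I$---itself obtained by applying Green's identity to $w$ and $\langle x,{\bf u}\rangle$, using $\Delta_{\phi_n}\langle x,{\bf u}\rangle = -\langle x,{\bf u}\rangle$ and $\partial_\nu w|_\Sigma = -c$---yields the companion
\[
\int_\Omega v^2 \langle x,{\bf u}\rangle\, d{\rm vol}_{\phi_n} \;=\; 2\int_\Omega P\, \langle x,{\bf u}\rangle\, d{\rm vol}_{\phi_n} + I.
\]

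Subtracting the two displayed identities cancels the $v^2$-integral and yields $\int_\Omega P\,\langle x,{\bf u}\rangle\, d{\rm vol}_{\phi_n} = c^2\,I$. Since $\Omega \varsubsetneq \mathcal H^n_{{\bf u},0}$ guarantees $\langle x,{\bf u}\rangle > 0$ on $\Omega$ and the first step yields $c^2 - P \geq 0$ pointwise, the equation $\int_\Omega (c^2 - P)\langle x,{\bf u}\rangle\, d{\rm vol}_{\phi_n} = 0$ with non-negative integrand forces $P \equiv c^2$ on $\Omega$, as required. The most delicate part is the weighted maximum-principle step for the (possibly unbounded) domain $\Omega$, where one must justify $(P - c^2)_+ \in W^1_{\phi_n[0]}(\Omega)$ and the validity of the integration by parts---these should follow from the regularity and polynomial-growth hypotheses recorded in the paper's standing assumptions and Remark \ref{AAA:123}. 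The remaining manipulations are routine applications of Green's identity, provided the polynomially-weighted integrals (tempered by the Gaussian weight) converge absolutely.
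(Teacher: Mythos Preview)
Your proposal is correct and follows essentially the same strategy as the paper: a weighted maximum principle gives $P:=|\nabla w|^2\le c^2$, and then the Pohozhaev identity combined with integration by parts against the weight $\langle x,{\bf u}\rangle$ yields $\int_\Omega P\,\langle x,{\bf u}\rangle\,d{\rm vol}_{\phi_n}=c^2\int_\Omega\langle x,{\bf u}\rangle\,d{\rm vol}_{\phi_n}$, forcing $P\equiv c^2$ since the weight is positive on $\Omega$.

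The differences are organizational rather than substantive. You package the integral information as two clean identities in $\int_\Omega v^2\langle x,{\bf u}\rangle$ and subtract; the paper instead assumes strict inequality in $c^2I>\int_\Omega x_1P$ and unwinds a chain of manipulations until reaching $0>0$. Both routes amount to the same algebra. For the subharmonicity step, you argue directly via the test function $(P-c^2)_+$, whereas the paper invokes a weighted maximum principle from the literature; your version is more self-contained but, as you note, requires justifying $(P-c^2)_+\in W^1_{\phi_n[0]}(\Omega)$ on a possibly unbounded domain. One minor point: you assume $\partial_\nu w=-c$ on $\Sigma$ without justification. This is true (since $\underline\lambda(\Omega)>1$ makes $-\Delta_{\phi_n}-1$ coercive with positive Green's function, so $w>0$ and Hopf applies), but in fact your identities go through using only $\int_\Sigma\langle x,{\bf u}\rangle\,\partial_\nu w\,d{\rm area}_{\phi_n}=-I$, which follows directly from Green's identity without any sign information.
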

	
	\begin{proof}
		By rotational invariance we may assume that ${\bf u}={e_1}=(1,0,\cdots,0)$ so that $x_1\geq 0$ on $\Omega$. By (\ref{weitz:conseq:2}), 
		 the fact that $|\nabla w|=c$ along $\Sigma$ and 
		the appropriate maximum principle \cite[Theorem B]{bisterzo2024maximum}  we see that $|\nabla w|\leq c$ on $\Omega$, which immediately gives 
		\begin{equation}\label{ineq:w}
		c^2\int_\Omega x_1 d{\rm vol}_{\phi_n} > \int_\Omega x_1|\nabla w|^2 d{\rm vol}_{\phi_n} 
		\end{equation}
		unless there already holds $|\nabla w|=c$ everywhere on $\Omega$. 
		However, 
		\begin{eqnarray*}
		x_1|\nabla w|^2
		& = & 
		{\rm div}_{\phi_n} (x_1 w|\nabla w|)-w\langle \nabla x_1,\nabla w\rangle-x_1w\Delta_\phi w\\
		& = & {\rm div}_{\phi_n} (x_1 w|\nabla w|)-w\frac{\partial w}{\partial x_1}+x_1w(w+1),
		\end{eqnarray*}
		so if we take this to (\ref{ineq:w}) we obtain
		\begin{equation}\label{ineq:w:2}
		c^2\int_\Omega x_1 d{\rm vol}_{\phi_n} > 	
		-\int_\Omega w\frac{\partial w}{\partial x_1}d{\rm vol}_{\phi_n}+
		\int_\Omega x_1w(w+1)d{\rm vol}_{\phi_n}. 
			\end{equation}
		We now check that this contradicts the Pohozhaev-type identity (\ref{pohoz:ident:2}) for $X=-e_1$, for which there holds ${\rm div}_{\phi_n} X=x_1$. Indeed, integrating the left-hand side of (\ref{pohoz:ident:2}) over $\Omega$ with this choice and using the divergence theorem twice we get 
		\begin{eqnarray*}
		\int_\Sigma \left(\frac{c^2}{2}\langle X,\nu\rangle-\langle X,c\nu\rangle \langle c\nu,\nu\rangle\right)d{\rm area}_{\phi_n}
		& = & 
		-\frac{c^2}{2}\int_\Sigma \langle X,\nu\rangle d{\rm area}_{\phi_n}\\
		& = & -\frac{c^2}{2}\int_\Omega {\rm div}_{\phi_n} X d{\rm vol}_{\phi_n}\\
		& = & -\frac{c^2}{2}\int_\Omega x_1 d{\rm vol}_{\phi_n}.
		\end{eqnarray*}	
		On the other hand, the right-hand side of (\ref{pohoz:ident:2}) gives 
		\begin{eqnarray*}
		\frac{1}{2}\int_\Omega x_ 1|\nabla w|^2 d{\rm vol}_{\phi_n}-\int_\Omega \langle-e_1,\nabla w\rangle (-w-1)d{\rm vol}_{\phi_n}
		& = & 	\frac{1}{2}\int_\Omega x_ 1|\nabla w|^2 d{\rm vol}_{\phi_n}\\
		& & \quad -\int_\Omega \langle (w+1)e_1,\nabla w\rangle d{\rm vol}_{\phi_n}.
		\end{eqnarray*}
		Now, the last integral above may be manipulated as follows. We have
		\begin{eqnarray*}
		\langle (w+1)e_1,\nabla w\rangle 
		& = & {\rm div}_{\phi_n} (w(w+1)e_1)-w{\rm div}_{\phi_n}((w+1)e_1) 	\\
		& = & {\rm div}_{\phi_n} (w(w+1)e_1)-w(w+1){\rm div}_{\phi_n} e_1-w\langle \nabla w,e_1\rangle\\
		& = & {\rm div}_{\phi_n} (w(w+1)e_1)+x_1w(w+1)-w\frac{\partial w}{\partial x_1},
		\end{eqnarray*}
		so that 
		\[
		\int_\Omega \langle (w+1)e_1,\nabla w\rangle d{\rm vol}_{\phi_n}=\int_\Omega x_1w(w+1) d{\rm vol}_{\phi_n}-
		\int_\Omega w\frac{\partial w}{\partial x_1}  d{\rm vol}_{\phi_n},
		\]
		and we see 
		that altogether (\ref{pohoz:ident:2}) gives
		\[
		{c^2}\int_\Sigma x_1 d{\rm area}_{\phi_n}=
		-\int_\Omega x_ 1|\nabla w|^2 d{\rm vol}_{\phi_n}+2\int_\Omega x_1w(w+1) d{\rm vol}_{\phi_n}-2\int_\Omega w\frac{\partial w}{\partial x_1}  d{\rm vol}_{\phi_n}.
		\]
		Comparing this with (\ref{ineq:w:2}) we conclude that 
		\[
		\int_\Omega x_1w(w+1) d{\rm vol}_{\phi_n}-\int_\Omega w\frac{\partial w}{\partial x_1}  d{\rm vol}_{\phi_n}
		-\int_\Omega x_ 1|\nabla w|^2 d{\rm vol}_{\phi_n}>0.
		\]
		However, 
		\begin{eqnarray*}
			\int_\Omega x_1w(w+1) d{\rm vol}_{\phi_n}
			& = & -\int_\Omega x_1w\Delta_{\phi_n} w d{\rm vol}_{\phi_n}\\
			& = & \int_\Omega \langle\nabla (x_1w),\nabla w\rangle d{\rm vol}_{\phi_n}\\
			& = & \int_\Omega\left(w\frac{\partial w}{\partial x_1}+x_1|\nabla w|^2\right) d{\rm vol}_{\phi_n},
		\end{eqnarray*}
		so we end up with $0>0$, a contradiction which completes the proof. 
	\end{proof}
	
	\begin{proof} (of Theorem \ref{class:th}) By Proposition \ref{opt:prop:f} there exists $w:\overline\Omega\to\mathbb R$ satisfying (\ref{dich:min:bvp}), so Proposition \ref{charact:fin} applies to ensure that $|\nabla w|=c$ everywhere on $\Omega$. In particular, $\Delta_{\phi_n}(|\nabla w|^2)=0$ and the equality holds in (\ref{weitz:conseq}). Thus, by Proposition \ref{weitz:conseq}, $\nabla^2w\equiv 0$ and $w$ is an affine function. It follows that $\Sigma=\partial \Omega$ lies in a hyperplane (because $w$ vanishes there) and since 
	$\Omega\varsubsetneq \mathcal H^n_{{\bf u},0}$, the result follows.
	\end{proof}
	
	\begin{remark}\label{motiv:poin}
		Our proof of Theorem \ref{class:th} is modeled on \cite[Theorem 1.1]{ciraolo2019serrin}, where solutions of the over-determined system 
			\begin{equation}\label{dich:min:sph}
			\left\{
			\begin{array}{lc}
				-\Delta w-(k-1)Kw=1 &  \Omega\\
				w=0  & \Sigma \\
				|\nabla w| =c &  \Sigma	
			\end{array}
			\right.
		\end{equation}
		are shown to be quite rigid.
		Here, $\Omega\subset \mathscr S_{K,+}^{k-1}$, where $\mathscr S_{K,+}^{k-1}$ is a round hemisphere of dimensional $k-1$ and radius $1/\sqrt{K}$, $K>0$. In particular,  it is proved that $\Omega$ is a geodesic ball. Now, if we take $K=1/k$, so that $\mathscr S_{1/k,+}^{k-1}$ is a hemisphere of radius $\sqrt{k}$, and take the limit as $k\to +\infty$ then (\ref{dich:min:sph}) ``converges'' to the problem 
			\begin{equation}\label{dich:min:bvp:phi}
			\left\{
			\begin{array}{lc}
				-\Delta_\infty w-w=1 &  \Omega\\
				w=0  & \Sigma \\
				|\nabla w| =c &  \Sigma	
			\end{array}
			\right.
		\end{equation}
			Besides precisely determining the nature of the elusive ``Laplacian'' $\Delta_\infty$, the question remains of realizing in which space this limiting problem should be treated. From an entirely heuristic viewpoint, we may approach this by making use of the Poincar\'e limit theorem (see Appendix \ref{app:poinc} below), which may be loosely interpreted as saying that the push-forward of the weighted manifold $(\mathscr S_{1/k}^{k-1},\delta_{{\rm sph}_{k}},e^{-\phi_{(k)}}d{\rm vol}_{\delta_{{\rm sph}_{k}}})$ under the orthogonal projection $\Pi_{k,n}:\mathbb R^k\to\mathbb R^n$ converges in a suitable sense to the Gaussian space $(\mathbb R^n,\delta,e^{-\phi_n})$ as $k\to+\infty$. Here, $\delta_{{\rm sph}_{k}}$ is the standard spherical metric in $\mathscr S_{1/k}^{k-1}$ and $\phi_{(k)}>0$ is chosen such that ${\rm vol}_{\delta_{{\rm sph}_{k}}}(\mathscr S_{1/k}^{k-1})=e^{\phi_{(k)}}$, so that $\mathbb P_k:=e^{-\phi_{(k)}}d{\rm vol}_{\delta_{{\rm sph}_{k-1}}}$ defines a probability measure. This clearly suggests that, as we did above, the limiting problem (\ref{dich:min:bvp:phi}) should be treated in Gaussian half-space $\mathcal H^n_{{\bf u},0}=(\mathbb R^n_+,\delta,e^{-\phi_n}d{\rm vol}_\delta)$, with the {\em proviso} that  $\Delta_\infty$ must be replaced by the corresponding weighted Laplacian $\Delta_{\phi_n}$; see \cite{umemura1965infinite,mckean1973geometry} for a justification of this latter step. With the right over-determined system at hand, it remains to transplant to our setting the methods from \cite{ciraolo2019serrin}, which rely on 
			two key ingredients:
			\begin{itemize}
				\item Weinberger's approach \cite{weinberger1971remark} to Serrin rigidity is applied  to (\ref{dich:min:sph}) with the $P$-function
				\[
				P(w)=|\nabla w|^2+\frac{2}{k-1}w+K{w^2}\stackrel{k\to +\infty}{\longrightarrow} |\nabla w|^2,
				\]
				which naturally led to our much simpler choice in Proposition \ref{weitz:conseq};
				\item A Pohozhaev-type identity \cite[Proposition 3.1]{ciraolo2017rigidity} is required which, as it is well-known, turns out to be a manifestation of the existence of certain conformal fields on spheres which may be intrinsically characterized as gradient vector fields of functions lying in the first eigenspace of the metric Laplacian. Since spectral data behave quite well under Poincar\'e limit \cite[Theorem 1.1]{takatsu2022spectral}, we were naturally led to formulate Proposition \ref{pohoz:ident} in terms of {\em parallel} vector fields in Gaussian space, since they can be written as $\nabla \psi$ with $\psi$ satisfying $\Delta_{\phi_n}\psi+\psi=0$. Quite unsurprisingly, our Pohozhaev identity may be viewed as the Poincar\'e limit of theirs.
			\end{itemize}   
		\end{remark}

	\section{A Reilly formula and an Alexandrov-type theorem in Gaussian half-space}\label{alex:gauss}

	Inspired by Theorem \ref{class:cor}, and taking into account the heuristics behind it explained in Remark \ref{motiv:poin}, we are led to speculate on possible extensions of other classical results to the Gaussian setting by similar methods. The next theorem confirms this expectation and aligns itself with the well-known fact that Serrin's uniqueness result (or rather its reformulation by Weinberger \cite{weinberger1971remark}) may be used to provide an alternate proof of the classical Alexandrov's soap bubble theorem (cf. the comments in \cite{ciraolo2017rigidity}).     
	
	\begin{theorem}\label{alex:type:thm}
		Let $\Sigma\varsubsetneq \mathcal H^n_{{\bf u},0}$ be a smooth embedded hypersurface whose weighted mean curvature is (a non-zero) constant. Then $\Sigma=\partial \mathcal H^n_{{\bf u},d}$ for some $d> 0$. 
		\end{theorem}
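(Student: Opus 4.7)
The plan is to reduce the statement to Theorem \ref{class:th} by using the weighted Reilly identity of Proposition \ref{reilly:w} to convert the constancy of $H_\phi$ into the over-determined boundary condition $|\nabla w|\equiv c$ on $\Sigma$, following the paradigm of Qiu--Xia in the spherical case and exploiting that $\mathrm{Ric}_{\phi_n}=\delta$ in Gaussian space.

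First, since $\Sigma$ is embedded it separates $\mathcal H^n_{{\bf u},0}$ into two components; I would let $\Omega$ be the one for which $\nu$ is outward-pointing and the relevant Gaussian integrals converge (in line with the standing assumptions of Remark \ref{AAA:123}). Since $\Omega\varsubsetneq\mathcal H^n_{{\bf u},0}$, eigenvalue monotonicity and Example \ref{gauss:dir} give $\underline\lambda(\Omega)>1$, so by Remark \ref{suff:cond:inv} the BVP
\begin{equation*}
	-\Delta_{\phi_n}w-w=1\ \text{on }\Omega,\qquad w=0\ \text{on }\Sigma
\end{equation*}
is uniquely solvable. Integrating the PDE against $1$ and then against $w+1$, and using $w|_\Sigma=0$, one obtains the two ``Minkowski-type'' identities
\begin{equation*}
	-\int_\Sigma\tfrac{\partial w}{\partial\nu}\,d\mathrm{area}_{\phi_n}=\int_\Omega(w+1)\,d\mathrm{vol}_{\phi_n},\qquad \int_\Omega\bigl[(w+1)^2-|\nabla w|^2\bigr]d\mathrm{vol}_{\phi_n}=\int_\Omega(w+1)\,d\mathrm{vol}_{\phi_n}.
\end{equation*}

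Next, I would apply Proposition \ref{reilly:w} to $w$. Since $w|_\Sigma=0$ kills the tangential gradient and the intrinsic Laplacian terms, and since $\mathrm{Ric}_{\phi_n}(\nabla w,\nabla w)=|\nabla w|^2$ in Gaussian space, the identity collapses to
\begin{equation*}
	\int_\Omega\bigl[(w+1)^2-|\nabla^2 w|^2-|\nabla w|^2\bigr]d\mathrm{vol}_{\phi_n}=H_\phi\int_\Sigma\Bigl(\tfrac{\partial w}{\partial\nu}\Bigr)^2 d\mathrm{area}_{\phi_n},
\end{equation*}
where $H_\phi$ denotes the (constant) weighted mean curvature of $\Sigma$. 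Combining this with the two Minkowski-type identities above produces
\begin{equation*}
	\int_\Omega|\nabla^2 w|^2\,d\mathrm{vol}_{\phi_n}+H_\phi\int_\Sigma\Bigl(\tfrac{\partial w}{\partial\nu}\Bigr)^2 d\mathrm{area}_{\phi_n}=-\int_\Sigma\tfrac{\partial w}{\partial\nu}\,d\mathrm{area}_{\phi_n}.
\end{equation*}
Applying the Cauchy--Schwarz inequality on $\Sigma$ to the right-hand side together with the constancy of $H_\phi$ (i.e.\ $|H_\phi|\cdot\mathrm{area}_{\phi_n}(\Sigma)=|\int_\Sigma H_\phi\,d\mathrm{area}_{\phi_n}|$, which in turn can be re-expressed as a bulk integral by the divergence theorem applied to $\nu$) gives the reverse of the Reilly inequality, forcing equality everywhere.

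Equality forces $\nabla^2 w\equiv 0$ on $\Omega$ (from the nonnegative Hessian term), and the Cauchy--Schwarz equality case yields $\partial w/\partial\nu\equiv c$ on $\Sigma$ for some constant $c$. Thus $w$ satisfies the over-determined system (\ref{dich:min:bvp}), so Theorem \ref{class:th} applies and gives $\Omega=\mathcal H^n_{{\bf u},d}$ for some $d>0$; equivalently $\Sigma=\partial\mathcal H^n_{{\bf u},d}$, as desired.

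The main obstacle I anticipate is the technical one of carrying all of the above manipulations out on a (possibly unbounded) $\Omega$: specifically, checking that the auxiliary $w$ lies in the right weighted Sobolev space so that the Reilly identity and each integration by parts are legitimate, and verifying that the test functions implicit in the ``Minkowski-type'' derivations (polynomials in $x$) satisfy the growth conditions recorded in Remark \ref{AAA:123}. A secondary subtle point is correctly aligning the sign of $H_\phi$ with the choice of $\Omega$ and $\nu$ so that the Cauchy--Schwarz step actually yields the desired inequality in the right direction.
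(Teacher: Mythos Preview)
Your derivation is correct through the combined identity
\[
\int_\Omega|\nabla^2 w|^2\,d{\rm vol}_{\phi_n}+H_\phi\int_\Sigma w_\nu^2\,d{\rm area}_{\phi_n}=-\int_\Sigma w_\nu\,d{\rm area}_{\phi_n},
\]
but the closing step has a genuine gap. After Cauchy--Schwarz, what you need is
\[
H_\phi\cdot\bigl(-\textstyle\int_\Sigma w_\nu\bigr)\ \geq\ {\rm area}_{\phi_n}(\Sigma),
\]
i.e.\ a Gaussian Heintze--Karcher inequality. Your parenthetical justification (``$\int_\Sigma H_\phi$ can be re-expressed as a bulk integral by the divergence theorem applied to $\nu$'') does not produce this: $H_\phi=\mathrm{div}_\phi\nu$ is a pointwise identity on $\Sigma$, not an integrand over $\Omega$, and there is no clean Minkowski-type identity in Gaussian space relating ${\rm area}_{\phi_n}(\Sigma)$ to $H_\phi$ alone. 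The only Gaussian Minkowski formula available (the one cited as \cite[Corollary 3.4]{abdelmalek2021generalized}) reads $\int_\Sigma x_1=-H_\phi\int_\Sigma\langle e_1,\nu\rangle=H_\phi\int_\Omega x_1$, and it carries the weight $x_1$, not $1$.

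This is exactly why the paper does \emph{not} take $h\equiv 1$ in Proposition~\ref{reilly:w} but instead takes $h=x_1$. With that choice one has $\Delta_{\phi_n}h+h=0$ and the bulk terms in Reilly collapse to give
\[
\int_\Omega x_1-\int_\Omega x_1|\nabla^2 f|^2=H_\phi\int_\Sigma x_1 f_\nu^2,
\]
where $f$ solves $\Delta_{\phi_n}f+f=1$, $f|_\Sigma=0$. Now Cauchy--Schwarz is applied with the \emph{weight} $x_1>0$ (this is where $\Omega\subset\mathcal H^n_{e_1,0}$ enters), Green's identity gives $\int_\Sigma x_1 f_\nu=\int_\Omega x_1$, and the weighted Minkowski formula above closes the chain to yield $\int_\Omega x_1|\nabla^2 f|^2\leq 0$, hence $\nabla^2 f\equiv 0$. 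So the weight $x_1$ is not a technicality: it is precisely what matches the Reilly output to the available Minkowski identity.

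Two minor remarks. First, once you have $\nabla^2 w\equiv 0$ the function $w$ is affine and $\Sigma=\{w=0\}$ is already a hyperplane; the detour through Theorem~\ref{class:th} is unnecessary. Second, the sign issue you flag is real but secondary: in the paper's argument it is handled automatically because the weight $x_1$ is positive and the Minkowski identity fixes the sign of $H_\phi$ relative to $\int_\Omega x_1$.
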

		
		\begin{corollary}\label{alex:type:thm:c}
			There are no {\em compact} embedded hypersurfaces with (non-zero) constant weighted mean curvature in $ \mathcal H^n_{{\bf u},0}$.
		\end{corollary}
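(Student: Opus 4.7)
The plan is to obtain Corollary \ref{alex:type:thm:c} as an immediate consequence of Theorem \ref{alex:type:thm}, which I am permitted to assume. I will proceed by contradiction: suppose that there exists a smooth compact embedded hypersurface $\Sigma\subset\mathcal H^n_{{\bf u},0}$ whose weighted mean curvature is a nonzero constant. First I would verify that the hypotheses of Theorem \ref{alex:type:thm} are fulfilled. Since $\Sigma$ is an $(n-1)$-dimensional submanifold and $\mathcal H^n_{{\bf u},0}$ is an open set of full dimension in $\mathbb R^n$, the containment $\Sigma\varsubsetneq\mathcal H^n_{{\bf u},0}$ holds trivially. Moreover, compactness of $\Sigma$ makes the regularity/integrability requirements alluded to in Remark \ref{AAA:123} (in particular, membership of the relevant polynomial test functions in $W^2_\phi(\Sigma)$) automatic, since all such functions are bounded along with their derivatives on the compact set $\Sigma$.

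With the hypotheses in place, the conclusion of Theorem \ref{alex:type:thm} forces $\Sigma=\partial\mathcal H^n_{{\bf u},d}$ for some $d>0$. However,
\[
\partial\mathcal H^n_{{\bf u},d}=\{x\in\mathbb R^n:\langle x,{\bf u}\rangle_\delta=d\}
\]
is an affine hyperplane in $\mathbb R^n$, which is unbounded in the Euclidean metric and therefore not compact. This contradicts our standing assumption that $\Sigma$ is compact and completes the proof.

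No essential difficulty arises, since the corollary is simply a restatement of the rigidity provided by Theorem \ref{alex:type:thm} together with the elementary observation that hyperplanes are noncompact. The only point that merits a remark is why the theorem is applicable to an arbitrary compact $\Sigma$ sitting inside the half-space without any a priori constraint beyond embeddedness and constancy of $H_\phi$; as indicated above, this is guaranteed by compactness, which renders every technical assumption vacuously true.
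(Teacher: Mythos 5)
Your proof is correct and follows the argument the paper clearly intends: the corollary is stated without a separate proof precisely because it is the immediate contrapositive of Theorem \ref{alex:type:thm} combined with the elementary fact that the hyperplanes $\partial\mathcal H^n_{{\bf u},d}$ are noncompact. Your added remark that compactness makes the regularity assumptions of Remark \ref{AAA:123} automatic is a sensible observation consistent with the paper's setup.
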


	We start the proof  of this result by recalling that if $B$ is a bi-linear symmetric form on vectors on $\Omega$, its $\phi$-{\em divergence} is given by
	\begin{equation}\label{div:def:B}
	{\rm div}_\phi B=e^\phi \circ {\rm div}\, B\circ e^{-\phi}={\rm div}\,B-{\bf i}_{\nabla \phi}B,
	\end{equation}
	where ${\rm div}\,B_j=\nabla^iB_{ij}$ is the metric divergence and  ${\bf i}_Y$ means  contraction by a vector field  $Y$. 
	
	\begin{proposition}\label{div:phi:B}
	Under the conditions above, if $h$ is a smooth function on $\overline\Omega$ then
	\begin{equation}\label{div:phi:B:2}
		{\rm div}_\phi(h {\bf i}_YB) 
		 =  B(\nabla h,Y) +h{\bf i}_Y{\rm div}_\phi B+\frac{h}{2}\langle B,\mathscr L_Yg\rangle,
	\end{equation}
	where $\mathscr L$ denotes Lie derivative. 
	\end{proposition}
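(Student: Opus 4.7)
The plan is to reduce the identity to a direct tensorial computation, handling the purely Riemannian case first and then using the definition (\ref{div:def:B}) of $\phi$-divergence to transfer the statement to the weighted setting.

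In local coordinates, writing the components of the 1-form/vector field as $(h{\bf i}_Y B)_j = h Y^i B_{ij}$ and applying Leibniz,
\[
{\rm div}(h{\bf i}_Y B) = B(\nabla h, Y) + h B_{ij}\nabla^j Y^i + h Y^i\nabla^j B_{ij}.
\]
The last term is precisely $h{\bf i}_Y {\rm div}\,B$. For the middle term, the key observation is that since $B$ is symmetric, only the symmetric part of $\nabla Y$ contributes; using $(\mathscr L_Y g)_{ij} = \nabla_i Y_j + \nabla_j Y_i$ one gets
\[
h B_{ij}\nabla^j Y^i = \frac{h}{2}B_{ij}(\nabla^j Y^i + \nabla^i Y^j) = \frac{h}{2}\langle B, \mathscr L_Y g\rangle.
\]
This establishes the unweighted version
\[
{\rm div}(h{\bf i}_Y B) = B(\nabla h, Y) + h{\bf i}_Y {\rm div}\,B + \frac{h}{2}\langle B, \mathscr L_Y g\rangle.
\]

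To pass to the weighted setting, I would apply the definition of weighted divergence on both sides. On the left, treating $h{\bf i}_Y B$ as a vector field via the metric,
\[
{\rm div}_\phi(h{\bf i}_Y B) = {\rm div}(h{\bf i}_Y B) - \langle \nabla\phi, h{\bf i}_Y B\rangle = {\rm div}(h{\bf i}_Y B) - h B(\nabla\phi, Y),
\]
while by (\ref{div:def:B}) one has $h{\bf i}_Y {\rm div}\,B = h{\bf i}_Y {\rm div}_\phi B + h B(\nabla\phi, Y)$. Substituting into the unweighted identity, the two $h B(\nabla\phi, Y)$ contributions cancel exactly and (\ref{div:phi:B:2}) follows.

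No genuine obstacle is expected here: the proof is purely bookkeeping with the Leibniz rule and the symmetry of $B$. The only point worth emphasizing is the identification of the symmetric part of $\nabla Y$ with $\tfrac{1}{2}\mathscr L_Y g$, which is what produces the Lie-derivative term; the weighted correction terms organize themselves by design of the operator ${\rm div}_\phi$.
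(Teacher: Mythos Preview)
Your proof is correct and follows essentially the same approach as the paper: expand ${\rm div}(h{\bf i}_YB)$ via the Leibniz rule in coordinates, identify the $hB_{ij}\nabla^jY^i$ term as $\tfrac{h}{2}\langle B,\mathscr L_Yg\rangle$ using the symmetry of $B$, and then apply the definition of ${\rm div}_\phi$ on both sides so that the $hB(\nabla\phi,Y)$ corrections cancel. The paper's argument is organized slightly differently but is substantively identical.
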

	
	\begin{proof}
		We have
		\begin{eqnarray*}
			{\rm div}(h{\bf i}_YB)
			&= & 
			\nabla^i(hB_{ij}Y^j)\\
			& = & B_{ij}\nabla^i h Y^j+h\nabla^iB_{ij} Y^j+hB_{ij}\nabla^iY^j\\
			& = & B(\nabla h,Y) + h{\bf i}_Y{\rm div}\,B + 
			\frac{h}{2}\langle \mathscr L_Yg,B\rangle,
		\end{eqnarray*}
		and using (\ref{div:def:B}) twice,
		\begin{eqnarray*}
		{\rm div}_\phi(h{\bf i}_YB)
		& {=} &  
		B(\nabla h,Y) + h{\bf i}_Y{\rm div}B + 
		\frac{h}{2}\langle \mathscr L_Yg,B\rangle -h{\bf i}_{\nabla\phi}({\bf i}_YB)\rangle\\
		& = & 
			B(\nabla h,Y)+h{\bf i}_Y\left({\rm div}_\phi B+{\bf i}_{\nabla \phi}B\right)
			+ 
			\frac{h}{2}\langle \mathscr L_Yg,B\rangle -h{\bf i}_{\nabla\phi}({\bf i}_YB)\rangle,	
		\end{eqnarray*}
		so the result follows.
	\end{proof}
	
	It turns out that (\ref{div:phi:B:2}), for appropriate choices of $B$ and $Y$, is the starting point in establishing a Reilly-type formula holding for {\em any} weighted domain $(\Omega,g,d{\rm vol}_\phi)$ for which our standing assumptions hold.

	\begin{proposition}\label{reilly:w}
		Under our standing assumptions, if $h$ and $f$ are suitable functions on $\overline\Omega$ then there holds 
			\begin{eqnarray*}
			\int_\Omega h\left(\left(\Delta_\phi f+f\right)^2-|\nabla^2f|^2\right)
			& = &  \int_\Sigma h\left(2f_\nu\Delta_{\Sigma,\phi} f+H_\phi f_\nu^2+A_\Sigma(Df,Df)+2f_\nu f\right)\\
			& & 
			+\int_\Sigma h_\nu\left(|Df|^2-f^2\right)\\
			& & +\int_\Omega \left(
			(\nabla^2h)(\nabla f,\nabla f)-\Delta_\phi h\cdot |\nabla f|^2 -2h|\nabla f|^2+h{\rm Ric}_\phi(\nabla f,\nabla f)	\right)\\
			& & +\int_\Omega\left(\Delta_\phi h+h\right)f^2,
		\end{eqnarray*}
		where $A_\Sigma$ is the second fundamental form of $\Sigma$ and $f_\nu=\langle\nabla f,\nu\rangle$, etc.
		\end{proposition}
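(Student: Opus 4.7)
The plan is to split the identity by expanding $(\Delta_\phi f + f)^2 = (\Delta_\phi f)^2 + 2f\Delta_\phi f + f^2$, so that the proof reduces to a weighted Reilly identity (with weight $h$) for the principal piece $\int_\Omega h\bigl((\Delta_\phi f)^2 - |\nabla^2 f|^2\bigr)d{\rm vol}_\phi$ together with elementary manipulations for the lower-order corrections $\int_\Omega 2hf\Delta_\phi f\,d{\rm vol}_\phi$ and $\int_\Omega hf^2\,d{\rm vol}_\phi$. The lower-order corrections are immediate: starting from $\Delta_\phi(f^2) = 2f\Delta_\phi f + 2|\nabla f|^2$ and applying (\ref{int:part}) twice to $\int_\Omega h\Delta_\phi(f^2)\,d{\rm vol}_\phi$ yields
$$\int_\Omega 2hf\Delta_\phi f\,d{\rm vol}_\phi = \int_\Omega f^2\Delta_\phi h\,d{\rm vol}_\phi - 2\int_\Omega h|\nabla f|^2\,d{\rm vol}_\phi + 2\int_\Sigma hff_\nu\,d{\rm area}_\phi - \int_\Sigma h_\nu f^2\,d{\rm area}_\phi,$$
which, combined with $\int_\Omega hf^2\,d{\rm vol}_\phi$ from the expansion, accounts precisely for the ``extra'' bulk terms $-2h|\nabla f|^2$ and $(\Delta_\phi h + h)f^2$, the boundary contribution $+2hf_\nu f$, and the $-f^2$ summand inside $\int_\Sigma h_\nu(|Df|^2 - f^2)\,d{\rm area}_\phi$.

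For the principal piece, I would start from the weighted Bochner--Weitzenb\"ock identity
$$\tfrac{1}{2}\Delta_\phi|\nabla f|^2 = |\nabla^2 f|^2 + \langle \nabla f,\nabla\Delta_\phi f\rangle + {\rm Ric}_\phi(\nabla f,\nabla f),$$
which can be derived from Proposition \ref{div:phi:B} with $B = \nabla^2 f$, $Y = \nabla f$ and $h \equiv 1$, combined with the weighted Ricci commutator ${\rm div}_\phi\nabla^2 f = d\Delta_\phi f + {\rm Ric}_\phi(\nabla f,\cdot)$ (a direct consequence of the classical Ricci identity and (\ref{bakry:ricci})), the fact that $\tfrac12\mathscr L_{\nabla f}g = \nabla^2 f$, and ${\bf i}_{\nabla f}\nabla^2 f = \tfrac12\nabla|\nabla f|^2$. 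Multiplying this identity by $h$ and integrating against $d{\rm vol}_\phi$, then successively applying (\ref{int:part}) to reroute derivatives from $f$ onto $h$---the key intermediate step being $\int_\Omega \langle \nabla h,\nabla f\rangle\Delta_\phi f\,d{\rm vol}_\phi = -\int_\Omega (\nabla^2 h)(\nabla f,\nabla f)\,d{\rm vol}_\phi - \tfrac12\int_\Omega\langle\nabla h,\nabla|\nabla f|^2\rangle\,d{\rm vol}_\phi + \int_\Sigma f_\nu\langle \nabla h,\nabla f\rangle\,d{\rm area}_\phi$---produces all of the claimed bulk terms $\int_\Omega((\nabla^2 h)(\nabla f,\nabla f) - \Delta_\phi h\,|\nabla f|^2 + h\,{\rm Ric}_\phi(\nabla f,\nabla f))\,d{\rm vol}_\phi$ along with a residual collection of boundary integrals.

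What remains is to reorganize these boundary contributions into the stated form. Using the orthogonal decomposition $\nabla f|_\Sigma = Df + f_\nu\nu$, the splitting $|\nabla f|^2 = |Df|^2 + f_\nu^2$, the normal-Hessian identity $(\nabla^2 f)(\nu,\nu) = \Delta_\phi f - \Delta_{\Sigma,\phi} f - H_\phi f_\nu$ (from $\Delta f = \Delta_\Sigma f + Hf_\nu + (\nabla^2 f)(\nu,\nu)$ combined with (\ref{div:def}) and (\ref{mean:curv:weig})), and the Weingarten-type identity relating $(\nabla^2 f)(Df,\nu)$ to $\langle Df,Df_\nu\rangle$ and $A_\Sigma(Df,Df)$, a tangential integration by parts on $\Sigma$ via (\ref{int:part:s}) recombines the boundary terms into the stated combination $\int_\Sigma h(2 f_\nu\Delta_{\Sigma,\phi} f + H_\phi f_\nu^2 + A_\Sigma(Df,Df))\,d{\rm area}_\phi + \int_\Sigma h_\nu |Df|^2\,d{\rm area}_\phi$. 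The main obstacle is not conceptual but organizational: keeping track of signs and of the cancellations between boundary contributions produced at different integration-by-parts steps is where the bulk of the real labor lies.
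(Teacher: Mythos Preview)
Your proposal is correct and follows essentially the same route as the paper's proof: both hinge on the pointwise divergence identity of Proposition~\ref{div:phi:B} with $B=\nabla^2 f$, $Y=\nabla f$ (equivalently, the weighted Bochner formula together with the weighted Ricci commutator ${\rm div}_\phi\nabla^2 f=d\Delta_\phi f+{\rm Ric}_\phi(\nabla f,\cdot)$), repeated applications of (\ref{int:part}) to shift derivatives onto $h$, and the same Gauss--Weingarten decomposition of the boundary integrand. The only difference is organizational: you isolate the ``principal piece'' $\int_\Omega h\bigl((\Delta_\phi f)^2-|\nabla^2 f|^2\bigr)$ and treat the lower-order corrections via $\Delta_\phi(f^2)$ separately at the outset, whereas the paper carries the full $h$-weighted computation through and only expands $(\Delta_\phi f+f)^2$ midway; the intermediate identities and cancellations are the same in either ordering.
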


	\begin{proof}
	We take $B=\nabla^2 f$ and $Y=\nabla f$  in (\ref{div:phi:B:2}) and integrate over $\Omega$ to obtain 
	\[
	\int_\Omega h|\nabla^2f|^2=\int_\Sigma h(\nabla^2f)(\nabla f,\nu)-\int_\Omega (\nabla^2f)(\nabla h,\nabla f)-\int_\Omega h({\rm div}_\phi\nabla^2f)(\nabla f),
	\]
	where for convenience we omit the weighted area and volume elements in the integrals. 
	Now,
	\begin{eqnarray*}
	-\int_\Omega (\nabla^2f)(\nabla h,\nabla f) 
	& = & 
	 -\frac{1}{2}\int_\Omega\langle\nabla h,\nabla(|\nabla f|^2)\rangle \\
	 & = & 
	 \frac{1}{2}\int_\Omega \Delta_\phi h\cdot|\nabla f|^2-\frac{1}{2}\int_\Sigma |\nabla f|^2h_\nu
	\end{eqnarray*}
	and the Ricci identity easily implies that
	\[
	-\int_\Omega h({\rm div}_\phi\nabla^2f)(\nabla f)=-\int_\Omega h\langle\nabla(\Delta_\phi f),\nabla f\rangle -\int_\Omega h\,{\rm Ric}_\phi(\nabla f,\nabla f),
	\]
	so that 
	\begin{eqnarray*}
		\int_\Omega h|\nabla^2f|^2 
		& = &  \int_\Sigma h(\nabla^2f)(\nabla f,\nu) + \frac{1}{2}\int_\Omega \Delta_\phi h\cdot|\nabla f|^2-\frac{1}{2}\int_\Sigma |\nabla f|^2h_\nu\\
		& & -\int_\Omega h\langle\nabla(\Delta_\phi f),\nabla f\rangle -\int_\Omega h{\rm Ric}_\phi(\nabla f,\nabla f).
	\end{eqnarray*}
	On the other hand, the next to the last term in the right-hand side above may be treated as
	\begin{eqnarray*}
		-\int_\Omega h\langle\nabla(\Delta_\phi f),\nabla f\rangle
		& = & -\int_\Omega\langle \nabla(h\Delta_\phi f),\nabla f\rangle +\int_\Omega \Delta_\phi f\langle\nabla h,\nabla f\rangle \\
		& = & \int_\Omega  h(\Delta_\phi f)^2-\int_\Sigma h\Delta_\phi f\cdot f_\nu
		 +\int_\Omega \Delta_\phi f\langle\nabla h,\nabla f\rangle, 
	\end{eqnarray*}
	so we see that
	\begin{eqnarray*}
		\int_\Omega h|\nabla^2f|^2
	& = &  \int_\Sigma h(\nabla^2f)(\nabla f,\nu) -\frac{1}{2}\int_\Sigma |\nabla f|^2h_\nu
	+ \frac{1}{2}\int_\Omega \Delta_\phi h\cdot|\nabla f|^2\\
	& & -\int_\Sigma h\Delta_\phi f\cdot f_\nu +
	\int_\Omega  h(\Delta_\phi f)^2+\int_\Omega \Delta_\phi f\langle\nabla h,\nabla f\rangle\\
	& & -\int_\Omega h{\rm Ric}_\phi(\nabla f,\nabla f).	
	\end{eqnarray*}
	
	We now observe that 
	\begin{eqnarray*}
	\int_\Omega h\left(\left(\Delta_\phi f+f\right)^2-|\nabla^2f|^2\right)
	& = & \int_\Omega h(\Delta_\phi f)^2+ 2\int_\Omega h\Delta_\phi f\cdot f \\
	&  & +\int_\Omega hf^2-\int_\Omega h|\nabla^2 f|^2
	\end{eqnarray*}
	and 
	\begin{eqnarray*}
		\int_\Omega h\Delta_\phi f\cdot f
		& = & \int_\Sigma h ff_\nu-\int_\Omega \langle\nabla(hf),\nabla f\rangle\\
		& = & \int_\Sigma h ff_\nu-\int_\Omega h|\nabla f|^2 -\int_\Omega f\langle \nabla h,\nabla f\rangle, 
	\end{eqnarray*}
	so we obtain 
	\begin{eqnarray*}
	\int_\Omega h\left(\left(\Delta_\phi f+f\right)^2-|\nabla^2f|^2\right)
	& = & \int_\Omega h(\Delta_\phi f)^2 +
	2\int_\Sigma h ff_\nu-2\int_\Omega h|\nabla f|^2 -2\int_\Omega f\langle \nabla h,\nabla f\rangle\\
	& &  +\int_\Omega h f^2 \\
	& &  -\int_\Sigma h(\nabla^2f)(\nabla f,\nu) +\frac{1}{2}\int_\Sigma |\nabla f|^2h_\nu
	- \frac{1}{2}\int_\Omega \Delta_\phi h\cdot|\nabla f|^2\\
	& & +\int_\Sigma h\Delta_\phi f\cdot f_\nu -
	\int_\Omega  h(\Delta_\phi f)^2-\int_\Omega \Delta_\phi f\langle\nabla h,\nabla f\rangle\\
	& & +\int_\Omega h{\rm Ric}_\phi(\nabla f,\nabla f)\\
	& = & 
	\int_\Sigma h\Delta_\phi f\cdot f_\nu +\frac{1}{2}\int_\Sigma |\nabla f|^2h_\nu
	-\int_\Sigma h(\nabla^2f)(\nabla f,\nu) +	2\int_\Sigma h ff_\nu\\
	&  & - \frac{1}{2}\int_\Omega \Delta_\phi h\cdot|\nabla f|^2 
	-\int_\Omega \Delta_\phi f\langle\nabla h,\nabla f\rangle  +\int_\Omega h\,{\rm Ric}_\phi(\nabla f,\nabla f)\\
	& & -2\int_\Omega h|\nabla f|^2 -2\int_\Omega f\langle \nabla h,\nabla f\rangle +\int_\Omega h f^2.
	\end{eqnarray*}
	Since we can handle the sixth and ninth terms in the right-hand side above as 
	\begin{eqnarray*}
		-\int_\Omega \Delta_\phi f\langle\nabla h,\nabla f\rangle
		& = & -\int_\Sigma f_\nu\langle\nabla h,\nabla f\rangle  + \int_\Omega\langle\nabla f,\nabla\langle\nabla h,\nabla f\rangle\rangle\\
		& = & -\int_\Sigma f_\nu\langle\nabla h,\nabla f\rangle +\int_\Omega(\nabla^2h)(\nabla f,\nabla f)+\frac{1}{2}\int_\Omega \langle \nabla h,\nabla |\nabla f|^2\rangle\\
		& = & -\int_\Sigma f_\nu\langle\nabla h,\nabla f\rangle +\int_\Omega(\nabla^2h)(\nabla f,\nabla f)\\
		&  & +\frac{1}{2}\int_\Sigma |\nabla f|^2h_\nu-\frac{1}{2}\int_\Omega \Delta_\phi h\cdot |\nabla f|^2
	\end{eqnarray*}
	and 
	\begin{eqnarray*}
		-2\int_\Omega f\langle \nabla h,\nabla f\rangle 
		& = & -\int_\Omega \langle \nabla h,\nabla f^2\rangle\\
		& = & -\int_\Sigma f^2 h_\nu +\int_\Omega f^2\Delta_\phi h,
	\end{eqnarray*}
	we end up with 
	\begin{eqnarray}\label{in:term}
		\int_\Omega h\left(\left(\Delta_\phi f+f\right)^2-|\nabla^2f|^2\right)
		& = & 	\int_\Sigma h\left(\Delta_\phi f \cdot f_\nu-(\nabla^2f)(\nabla f,\nu)\right) \nonumber\\
		& & +\int_\Sigma\left(|\nabla f|^2h_\nu-f_\nu\langle\nabla h,\nabla f\rangle\right)\nonumber\\
		& & + 2\int_\Sigma hff_\nu -\int_\Sigma h_\nu f^2\nonumber\\
		& & +\int_\Omega \left(
		(\nabla^2h)(\nabla f,\nabla f)-\Delta_\phi h\cdot |\nabla f|^2 -2h|\nabla f|^2+h{\rm Ric}_\phi(\nabla f,\nabla f)	\right)\nonumber\\
	& & +\int_\Omega\left(\Delta_\phi h+h\right)f^2.
	\end{eqnarray}
	
	We now handle the first two boundary terms above by computing in an adapted frame along $\Sigma$. First,
	\begin{eqnarray*}
	\int_\Sigma h\left(\Delta_\phi f \cdot f_\nu-(\nabla^2f)(\nabla f,\nu)\right)
	& = & -\int_\Sigma h\langle \nabla \phi,\nabla f\rangle	f_\nu + 
	\int_\Sigma h\left(\Delta f \cdot f_\nu-(\nabla^2f)(\nabla f,\nu)\right)\\
	& = & -\int_\Sigma h\langle \nabla \phi,\nabla f\rangle	f_\nu \\
	& & +\int_\Sigma h\left(f_\nu\Delta_{\Sigma} f+H f_\nu^2-\langle D f_\nu,Df\rangle+A_\Sigma(Df,Df)\right)\\
	& = & -\int_\Sigma h\langle \nabla \phi,\nabla f-f_\nu\nu\rangle	f_\nu
	 +\int_\Sigma hf_\nu \langle D\phi,Df\rangle\\
	&  &  +\int_\Sigma h\left(f_\nu\Delta_{\Sigma,\phi} f+H_\phi f_\nu^2-\langle D f_\nu,Df\rangle+A_\Sigma(Df,Df)\right)\\
	& = &  \int_\Sigma h\left(f_\nu\Delta_{\Sigma,\phi} f+H_\phi f_\nu^2
	-\langle D f_\nu,Df\rangle+A_\Sigma(Df,Df)\right),
	\end{eqnarray*}
	where we used Gauss-Weingarten in the second step.
	Also, 
	\begin{eqnarray*}
		\int_\Sigma\left(|\nabla f|^2h_\nu-f_\nu\langle\nabla h,\nabla f\rangle\right)
		& = & \int_\Sigma\left(|Df|^2 h_\nu-f_\nu\langle Dh,Df\rangle\right)\\
		& = & \int_\Sigma\left(|D f|^2h_\nu -\langle D(f_\nu h),Df\rangle
		+h\langle Df_\nu,Df\rangle\right)\\
		& = & \int_\Sigma\left(
		|D f|^2h_\nu +h\langle Df_\nu,Df\rangle
		+f_\nu h\Delta_{\Sigma,\phi}f
		\right),
	\end{eqnarray*}
	so if we substitute these identities back in (\ref{in:term}) the result follows after a few manipulations. 
	\end{proof}
	
	\begin{proof} (of Theorem \ref{alex:type:thm}) By rotational invariance we may assume that ${\bf u}=e_1$. We apply our Reilly formula in Proposition \ref{reilly:w} with $h=x_1$, so that $\Delta_{\phi_n} h+h=0$ and choose $f$ satisfying the Dirichlet problem
	\begin{equation}\label{dir:f}
		\left\{
		\begin{array}{lc}
			\Delta_{\phi_n} f+f=1 &  \Omega\\
			f=0  & \Sigma 
		\end{array}
		\right.
	\end{equation}
	where we may assume that $\Sigma\neq \partial \mathcal H^n_{e_1,0}$, hence bounding a domain $\Omega\varsubsetneq \mathcal H^n_{e_1,0}$; cf. Remark \ref{suff:cond:inv}. 
	Since ${\rm Ric}_{\phi_n}=\delta$, the formula reduces to 
	\[
	\int_\Omega x_1-\int_\Omega x_1|\nabla^2f|^2=\int_\Sigma H_{\phi_n} x_1f_\nu^2= H_{\phi_n}\int_\Sigma x_1f_\nu^2,
	\]	
	and using Cauchy-Schwarz, 
	\begin{equation}\label{c:s}
			\int_\Omega x_1-\int_\Omega x_1|\nabla^2f|^2\geq H_{\phi_n}\frac{\left(\int_\Sigma x_1 f_\nu\right)^2}{\int_\Sigma x_1}. 
	\end{equation}
	Now, as a consequence of (\ref{int:part}),
	\[
	\int_\Sigma x_1 f_\nu =\int_\Omega x_1\Delta_{\phi_n} f - \int_\Omega f\Delta_{\phi_n} x_1=\int_\Omega x_1.
	\]
	On the other hand, the Minkowski formula in Gaussian space \cite[Corollary 3.4]{abdelmalek2021generalized} gives
	\[
	\int_\Sigma x_1=-H_{\phi_n}\int_\Sigma\langle e_1,\nu\rangle,
	\]
	and since 
	\[
	\int_\Sigma\langle e_1,\nu\rangle=\int_\Omega {\rm div}_{\phi_n} e_1=-\int_\Omega x_1, 
	\]
	we see from (\ref{c:s}) that $\nabla^2f\equiv 0$ and therefore $f$ is an affine function. Thus, 
	$\Sigma$ lies in a hyperplane and since $\Omega\subset \mathcal H^n_{e_1,0}$, the proof is completed. 
		\end{proof}
	
	\begin{remark}\label{opt:ppp}
	Proposition \ref{opt:prop:f} implies that an optimal domain $\Omega\varsubsetneq \mathcal H^n_{e_1,0}$ for $\mathcal E_{-1,-1}$ supports a function $f$ satisfying the over-determined sustem
	\begin{equation}\label{dir:f:2}
		\left\{
		\begin{array}{lc}
			\Delta_{\phi_n} f+f=1 &  \Omega\\
			f=0  & \Sigma \\
			|\nabla f|=c & \Sigma
		\end{array}
		\right.
	\end{equation}
	for some $c>0$. 
The proof above  
	shows that if we replace the third condition in (\ref{dir:f:2}) by the assumption that the weighted mean curvature of $\Sigma=\partial \Omega$ is constant then rigidity is recovered.
		\end{remark}
	
	\begin{remark}\label{mod:alex}
		Our argument leading to Theorem \ref{alex:type:thm} is modeled on the proof of \cite[Theorem 1.2]{qiu2015generalization}, where Alexandrov's soap bubble theorem in simply connected space forms is retrieved as a consequence of a generalized Reilly formula \cite[Theorem 1.1]{qiu2015generalization}. It turns out that if we apply their formula to (a compact domain in) the round hemisphere $\mathscr S^{k-1}_{1/k,+}$ in Remark \ref{motiv:poin}, take the limit as $k\to +\infty$ and argue (always on heuristic grounds!) that not only the metric Laplacian but also the standard mean and Ricci curvatures should give rise to their weighted counterparts along Poincar\'e convergence, we obtain our Reilly-type formula in Proposition \ref{reilly:w} (as applied to Gaussian space). Of course, the long computation in the proof of this latter proposition provides a rigorous justification of this rather informal argument based on Poincar\'e limit (Appendix  \ref{app:poinc}). Finally, we mention that Proposition \ref{reilly:w} (again, as applied to Gaussian space) also follows formally as the appropriate Poincar\'e limit of a Reilly-type formula in \cite[Theorem 1.2]{deng2020sharp}, which by its turn generalizes to weighted domains the one in \cite{qiu2015generalization} referred to above.
		\end{remark}
		
		\begin{remark}\label{cheng:sur}
			Hypersurfaces in weighted manifolds with constant weighted mean curvature are usually referred to as $\lambda$-{\em hypersurfaces} and there is a huge literature on trying to classify them under a most varied set of assumptions. We refer to \cite{cheng2022complete} for a recent survey on the subject. 
			\end{remark}

\appendix
\section{Poincar\'e limit theorem}\label{app:poinc}

	For the reader's convenience, we provide here the precise statement of Poincar\'e limit theorem \cite{hida1964gaussian,umemura1965infinite,mckean1973geometry,diaconis1987dozen}, which has been used in a rather informal way in the bulk of the text,  and then complement it with a brief discussion on its modern formulation in the framework of Gromov's metric measure geometry \cite{gromov1999metric,shioya2016metric,shioya2022metric}, in which it appears as an instance of ``phase transition'' between the extreme regimes arising as the limits of suitably re-scaled spheres. Taken together, these formal results provide appreciable support to the heuristics described in Remarks \ref{motiv:poin} and \ref{mod:alex}, which  led to Theorems \ref{class:th} and \ref{alex:type:thm}, respectively.  
	
	Using the notation of Remark \ref{motiv:poin}, let  
	$\Pi_{k,n}:\mathbb R^k\to\mathbb R^n$ be the orthogonal projection associated to the natural embedding $\mathbb R^n\hookrightarrow\mathbb R^k$ and let $(\mathscr S^{k-1}_{1/k},\mathbb P_{k})$ be the 
	probability space defined by $\mathbb P_k=e^{-\phi_{(k)}}d{\rm vol}_{\delta_{{\rm sph}_k}}$.
 With this terminology, and using standard probabilistic jargon, Poincar\'e limit theorem says that the  
	random vectors 
	\[
	\widetilde{\Pi}_{k,n}=\Pi_{k,n}|_{\mathscr S^{k-1}_{1/k}}:(\mathscr S^{k-1}_{1/k},\mathbb P_k)\to\mathbb R^n
	\]
	 converge {\em weakly} to a $\mathbb R^n$-valued random vector $Z$ whose distribution is the Gaussian density $d{\rm vol}_{\phi_n}=e^{-\phi_n}d{\rm vol}_\delta$,
	which means that $\mathbb E(\psi(\widetilde{\Pi}_{k,n}))\to \mathbb E(\psi(Z)))$ as $k\to+\infty$ for any $\psi:\mathbb R^n\to \mathbb R$ uniformly bounded and continuous. The standard probabilistic proof of this remarkable result combines the Law of Large Numbers and the well-known fact that a 
	random vector $Z^{[k]}$ uniformly distributed over $\mathscr S^{k-1}_{1/k}$ may be expressed as 
	\[
		Z^{[k]}=\sqrt{k} \frac{X^{[k]}}{\|X^{[k]}\|},
	\]
	where $X^{[k]}$ is a $\mathbb R^k$-valued standard Gaussian vector; see \cite[Proposition 6.1]{lifshits2012lectures} for a proof along these lines. 
	With a bit more of effort it may be checked that 
	this statement actually holds true with $\psi={\bf 1}_A$, the indicator function 
	of an arbitrary Borel set $A\subset \mathbb R^n$. Precisely, 
	\begin{equation}\label{poinc:sets}
		\lim_{k\to +\infty}
		\mathbb P_{k}(\widetilde\Pi_{k,n}^{-1}(A))
		={\rm vol}_{\phi_n}(A); 
	\end{equation}
	see \cite[Section 1.1]{ledoux2013probability} or \cite[Section 1]{ledoux2006isoperimetry}, where it is also explained how this sharpened version may be used to transfer the solution of the isoperimetric problem from $(\mathscr S^{k-1}_{1/k},\mathbb P_k)$ to Gaussian space as $k\to +\infty$, a celebrated result first proved by Borell \cite{borell1975brunn} and Sudakov-Tsirel'son \cite{sudakov1978extremal} and which lies at the heart of the ``concentration of measure phenomenon'' \cite{gromov1999metric,ledoux2001concentration,shioya2016metric}. 
	Inspired by this circle of ideas we are led to loosely interpret (\ref{poinc:sets})  as saying that the ``projections'' of the weighted manifolds $(\mathscr S^{k-1}_{1/k},\delta_{{\rm sph}_k},\mathbb P_k)$ onto $\mathbb R^k$ converge (in a sense whose actual meaning is not relevant at this point) to the Gaussian space $(\mathbb R^n,\delta,d{\rm vol}_{\phi_n})$, a perspective we have naively explored above in order to transplant problems and techniques from (high dimensional) spheres to Gaussian space. 
	We refer to 
	\cite[Remarks 5.14 and 5.15]{delima2023probab} 
	for a detailed guide to the proofs of both versions of Poincar\'e limit theorem mentioned above.    
	
Regarding this theory, a major development occurred when Gromov \cite[Chapter 3$\frac{1}{2}$]{gromov1999metric} outlined the construction of an explicit compactification, say $\Xi$, for the collection $\mathcal X$ of all weighted spaces (endowed with the {\em observable distance}, its natural metric, and with no restriction whatsoever on the dimension and/or diameter). In fact, Gromov considers the more general class of {\em mm-spaces} formed by complete separable metric spaces $(X,d)$ endowed with a probabilty measure $\mu$ on their Borel algebra, so that in particular closed Riemannian manifolds are always equipped with their normalized volume elements. Roughly speaking, 
the underlying idea is that in general 
the limit of a sequence of mm-spaces is no longer an mm-space but rather a sort of ideal object called a {\em pyramid}, so that in
order to perform the topological completion one has to attach to $\mathcal X$ these ideal configurations. 
With this provision, the compactifying embedding map $\mathcal P:\mathcal X\hookrightarrow \Xi$ associates to each metric measure space $(X,d,\mu)$ its pyramid, say $\mathcal P_{X}$. 
In this setting, the important notion of $\kappa$-{\em observable diameter}, $0<\kappa<1$, allows for a modern rewriting of an old computation due to P. L\'evy \cite{levy1951problemes}:
\begin{equation}\label{levy}
\kappa{\mbox{-}}{\rm observable}\,{\rm diameter}\left(\mathcal P_{\mathscr S^{k-1}_{1}}\right)\sim_{\kappa\mkern-9.5mu/}\frac{1}{\sqrt{k}}, 
\end{equation}
where $a_k(\kappa)\sim_{\kappa\mkern-9.5mu/} b_k$ means that the sequences $a_k(\kappa)/b_k$ and $b_k/a_k(\kappa)$ are both uniformly bounded as $k\to +\infty$ {\em independently} of $\kappa$.
More recently, Gromov's construction has been revisited by Shioya and collaborators; see \cite{shioya2016metric,shioya2022metric} and the references therein. 
In particular, a metric on $\Xi$ has been exhibited which is compatible with the topology of the compactification. 
In this reassessment, (\ref{levy}) is viewed as the fulfillment of a criterion for the validity of a phase transition property guaranteeing that suitable re-scalings of $\mathcal P_{\mathscr S^{k-1}_{1}}$ converge to the extreme pyramids $\mathcal P_{\bullet}$ and $\mathcal P_{\mathcal X}$, according to the case \cite[Theorem 1.2]{ozawa2015limit}; here, $\bullet$ is the one-point mm-space and $\mathcal P_{\mathcal X}$ is the pyramid associated to the whole colection $\mathcal X$ of mm-spaces. 
This remarkable dichotomy is 
then sharpened by means of the following striking result, which identifies 
$\sqrt{k}$ as being the {\em critical scale order} of the underlying phase transition and in whose proof Poincar\'e limit plays a key role. 

\begin{theorem}\label{phase}(Poincar\'e limit as a phase transition \cite[Theorem 8.1.1]{shioya2017metric})
	For any given sequence $\{r_k\}$ of positive real numbers there hold:
	\begin{enumerate}
		\item $r_k/\sqrt{k}\to 0$ if and only if $\mathcal P_{\mathscr S^{k-1}_{1/r_k^2}}\to \mathcal P_{\bullet}$;
			\item $r_k/\sqrt{k}\to +\infty$ if and only if $\mathcal P_{\mathscr S^{k-1}_{1/r_k^2}}\to \mathcal P_{\mathcal X}$;
		\item $r_k/\sqrt{k}\to \sigma$, $0<\sigma<+\infty$, 	if and only if $\mathcal P_{\mathscr S^{k-1}_{1/r_k^2}}\to \mathcal P_{\Gamma^\infty_{\sigma^2}}$, where $\Gamma^\infty_{\sigma^2}$ is the infinite dimensional Gaussian space of variance $\sigma^2$.
	\end{enumerate}
	\end{theorem}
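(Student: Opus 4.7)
The plan is to treat the three cases in sequence, exploiting a single unifying principle: rescaling (\ref{levy}) by the radius $r_k$ yields that the $\kappa$-observable diameter of $\mathcal P_{\mathscr S^{k-1}_{1/r_k^2}}$ is comparable to $r_k/\sqrt k$, uniformly in $\kappa\in (0,1)$. Each of the three regimes in the statement then corresponds to a distinct asymptotic behavior of this invariant, and the proof consists in unfolding this correspondence through the standard criteria for convergence of pyramids in the compactification $\Xi$.

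For item (1), if $r_k/\sqrt k\to 0$ the observable diameters collapse to zero uniformly in $\kappa$, which by the standard concentration criterion forces the pyramids to converge to the one-point pyramid $\mathcal P_\bullet$. Conversely, the observable diameter is upper semi-continuous along pyramid convergence and vanishes identically on $\mathcal P_\bullet$, so convergence to $\mathcal P_\bullet$ forces the scaling condition. For item (2), if $r_k/\sqrt k\to +\infty$ the observable diameters diverge, no concentration occurs, and the increasingly spread-out mm-spaces fill up the compactification in the sense that every pyramid is asymptotically contained, yielding $\mathcal P_{\mathscr S^{k-1}_{1/r_k^2}}\to \mathcal P_{\mathcal X}$; the converse is again dictated by the behavior of the observable diameter at the maximal element $\mathcal P_{\mathcal X}$.

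For the critical item (3), where the observable diameters stabilize at a finite, nontrivial scale, Poincar\'e's limit theorem becomes decisive. Applied in rescaled form to $\mathscr S^{k-1}_{r_k}$ with $r_k\sim \sigma\sqrt k$, it asserts that for every fixed $n$ the pushforward of the normalized volume on the sphere under the orthogonal projection $\Pi_{k,n}:\mathbb R^k\to\mathbb R^n$ converges to the $n$-dimensional Gaussian of variance $\sigma^2$. Since pyramid convergence can be characterized by convergence of all finite-dimensional $1$-Lipschitz projections (this is the precise content of the observable topology on $\Xi$ in the Shioya formulation), and since $\Gamma^\infty_{\sigma^2}$ is intrinsically defined as the projective limit of the Gaussians of variance $\sigma^2$ on finite-dimensional subspaces, the conclusion follows. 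The converses in items (1), (2) and (3) are then extracted by noting that $\mathcal P_\bullet$, $\mathcal P_{\mathcal X}$ and $\mathcal P_{\Gamma^\infty_{\sigma^2}}$ occupy mutually exclusive regions of the compactification in terms of the observable-diameter invariant, so only one of the three scaling regimes can produce each limit.

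The main technical obstacle lies in item (3): one must verify that the pointwise convergence of finite-dimensional marginals guaranteed by Poincar\'e's theorem actually detects convergence in the observable distance on $\Xi$, not merely weak convergence of each fixed projection. This requires uniform control, over all $1$-Lipschitz observables and all dimensions simultaneously, of the concentration behavior of the sphere beyond the scale-only statement (\ref{levy}); quantitative exponential concentration inequalities on spheres (e.g.\ L\'evy--Gromov isoperimetry) provide precisely such uniform bounds, and once plugged into the criteria characterizing the topology on $\Xi$, they close the argument.
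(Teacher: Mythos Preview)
The paper does not prove this theorem. Theorem~\ref{phase} is stated in the appendix as a citation of \cite[Theorem 8.1.1]{shioya2017metric}; the surrounding text only provides context (the observable-diameter computation (\ref{levy}), the compactification $\Xi$, and the remark that Poincar\'e's limit ``plays a key role'' in the proof) but no argument. There is therefore nothing in the paper to compare your proposal against.

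That said, your outline is broadly consistent with the strategy in Shioya's monograph: the observable diameter governs the trichotomy, items (1) and (2) follow from general concentration/dissipation criteria for pyramid convergence, and item (3) hinges on Poincar\'e's limit for finite-dimensional marginals together with uniform concentration. You also correctly flag the genuine difficulty, namely upgrading marginal (weak) convergence to convergence in the observable distance on $\Xi$. Be aware, however, that your sketch leaves real work undone at exactly that point: the assertion that ``pyramid convergence can be characterized by convergence of all finite-dimensional $1$-Lipschitz projections'' is not a definition one can simply invoke---it is essentially the content of the theorem in the critical case, and making it precise requires the full machinery of measurements and the box/observable metrics developed in \cite{shioya2016metric}. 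As written, your proposal is a plausible roadmap rather than a proof.
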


We remark that (1) above, which portrays $\mathscr S^{k-1}_{1/r_k^2}$ as a {\em L\'evy family}, already appears in \cite[Section 1]{gromov1983topological}. This is of course the most relevant case in applications such as concentration inequalities \cite{ledoux2001concentration}, essentially because in this setting observable diameters roughly correspond to (normal) standard deviations. 
However, if one intends to avoid the extreme cases displayed in (1) and (2) above, so as to hope for the emergence of some sort of geometry in the limit, one is forced to re-scale  $\mathcal P_{\mathscr S^{k-1}_{1}}$ by  $r_k\sim\sqrt{k}$, as in Poincar\'e limit discussed earlier. It is clear, 
moreover, that the third item (with $\sigma=1$) adds considerable support to the heuristics that guided us in formulating and proving the rigidity results in Theorems \ref{class:th} and \ref{alex:type:thm}.  

\bibliographystyle{alpha}
\bibliography{faber}

\end{document}